\pgfplotsset{compat=1.18}
\newlength\fwidth
\newcommand{\R}[1]{\ensuremath{\mathbb{R}^{#1}}}
\renewcommand{\S}[1]{\ensuremath{\mathbb{S}_{#1}}}
\renewcommand{\P}[2][]{\mathbb{P}_{#1} ( #2 ) }
\newcommand{\cN}{\mathcal{N}}
\newcommand{\Zplus}{\mathbb{Z}\sub{+}}
\newcommand{\E}[1]{\mathbb{E}\left[#1\right]}
\newcommand{\Cov}[1]{\mathrm{Cov}\left( #1 \right)}
\newcommand{\tr}[1]{\ensuremath{\mathrm{tr} \left( #1 \right) }}
\newcommand{\vertcat}[1]{\ensuremath{\mathrm{vertcat} (#1)}}
\newcommand{\bdiag}[1]{\ensuremath{\mathrm{bdiag} (#1)}}
\DeclareMathOperator*{\argmin}{arg\,min}
\newcommand{\transpose}{\ensuremath{^{\mathrm{T}}}}
\newcommand{\sub}[1]{\ensuremath{_{#1}}}
\newcommand{\spr}[1]{\ensuremath{^{#1}}}
\newcommand{\curly}[1]{ \ensuremath{ \{ #1 \} } }
\newtheorem{theorem}{Theorem}
\newtheorem{lemma}{Lemma}
\newtheorem{proposition}{Proposition}
\newtheorem{problem}{Problem}
\theoremstyle{definition}
\newtheorem{remark}{Remark}
\newtheorem{assumption}{Assumption}
\newtheorem{definition}{Definition}
\newtheorem{corollary}{Corollary}
\renewenvironment{proof}
    {\noindent \textit{Proof.}}
    {\hfill \ensuremath{\blacksquare}}
\def\initColor{blue!30}
\def\termColor{green!30}
\def\desColor{red!30}
\def\ex{0.5}
\def\ey{0.5}
\newcommand{\drawellipseInit}[7]{
    \begin{scope}[shift={(#1, #2)}]
    \begin{scope}[rotate=#3]
        \fill[fill=\initColor, opacity=#7] (0, 0) ellipse (#4cm and #5cm);
    \end{scope};
    \node at (0, 0) {$\mu_#6^0, \Sigma_#6^0$};
    \end{scope};
}
\newcommand{\drawellipseDes}[7]{
    \begin{scope}[shift={(#1, #2)}]
    \begin{scope}[rotate=#3]
        \fill[fill=\desColor, opacity=#7] (0, 0) ellipse (#4cm and #5cm);
    \end{scope};
    \node at (0, 0) {$\mu_#6^\mathrm{d}, \Sigma_#6^\mathrm{d}$};
    \end{scope};
}
\newcommand{\drawellipseTerm}[7]{
    \begin{scope}[shift={(#1, #2)}]
    \begin{scope}[rotate=#3]
        \fill[fill=\termColor, opacity=#7] (0, 0) ellipse (#4cm and #5cm);
    \end{scope};
    \node at (0, 0) {$\mu_#6^N, \Sigma_#6^N$};
    \end{scope};
}
\title{Constrained Multi-Modal Density Control of Linear Systems via Covariance Steering Theory}
\author{Isin M. Balci, \and Efstathios Bakolas}
\date{November 2024}
\begin{document}

\maketitle

\begin{abstract}
    In this paper, we investigate finite-horizon optimal density steering problems for discrete-time stochastic linear dynamical systems whose state probability densities can be represented as Gaussian Mixture Models (GMMs). Our goal is to compute optimal controllers that can ensure that the terminal state distribution will match the desired distribution exactly (hard-constrained version) or closely (soft-constrained version) where in the latter case we employ a Wasserstein like metric that can measure the distance between different GMMs. Our approach relies on a class of randomized control policies which allow us to reformulate the proposed density steering problems as finite-dimensional optimization problems, and in particular, linear and bilinear programs. Additionally, we explore more general density steering problems based on the approximation of general distributions by GMMs and characterize bounds for the error between the terminal distribution under our policy and the approximated GMM terminal state distribution. Finally, we demonstrate the effectiveness of our approach through non-trivial numerical experiments.
\end{abstract}

\begin{IEEEkeywords}
    Stochastic Optimal Control, Uncertain Systems, Convex Optimization
\end{IEEEkeywords}

\section{Introduction}\label{s:introduction}
In this paper, we address a class of optimal multi-modal density steering problems for discrete-time linear dynamical systems, in which the probability density of the state process is represented by Gaussian Mixture Models (GMMs), owing to their universal approximation property \cite[Chapter 3]{b:stergiopoulos2017advSignalProHandbook}. Such problems fall under the umbrella of ``optimal mass transport'' (OMT) \cite{p:chen2016OMToverLinearDS}. Throughout the paper, we explore various versions of the density steering problem, including those with input and state constraints, and cost functions designed as convex combinations of Wasserstein-like distance functions for GMMs \cite{p:chen2018OT-GMM} and quadratic cost functions (in terms of state and control inputs).

Typically, three different approaches can be employed to solve density steering problems. 
The goal in the first approach, which considers continuous-time state space models, is to control the evolution of the Fokker-Planck partial differential equation (PDE) describing the evolution of the probability density function (PDF) of the state \cite{p:eren2017velocityFieldDensityControl, p:zheng2022backsteppingDensityControl, p:sinigaglia2022density}. The second approach, which considers discrete state space models, employs Markov chain-based methods along with convex optimization tools to design a transition matrix for the Markov chain that will realize the transfer of the probability distribution to the desired one \cite{p:demir2015decentralizedProbabilisticDensityControl, p:djeumou2022probabilisticSwarmGTL}. Lastly, the third approach treats the density steering problem as a static mass transport problem which can be addressed by well-known optimal mass transport (OMT) algorithms \cite{p:chen2016OMToverLinearDS, p:debadyn2021discreteTimeLQRviaOT, p:ito2023entropicMPCOT, p:krishnan2018distributedOTforSwarm}.

Our approach 
is primarily aligned with OMT methods. 
However, instead of seeking the optimal transport map in a general discretized state-space, we utilize GMM along with covariance steering theory \cite{p:chen2015optimalCSI, p:goldshtein2017finitehorizonCS, p:balci2022exact} for a gridless approach. 
This enables us to formulate and solve a lower-dimensional linear program (LP) in the unconstrained case and a bilinear program for the constrained case, providing a computationally efficient method.

\begin{figure}
    \centering
    \begin{tikzpicture}
        \node (main image) at (0, 0) {\includegraphics[width=0.85\linewidth]{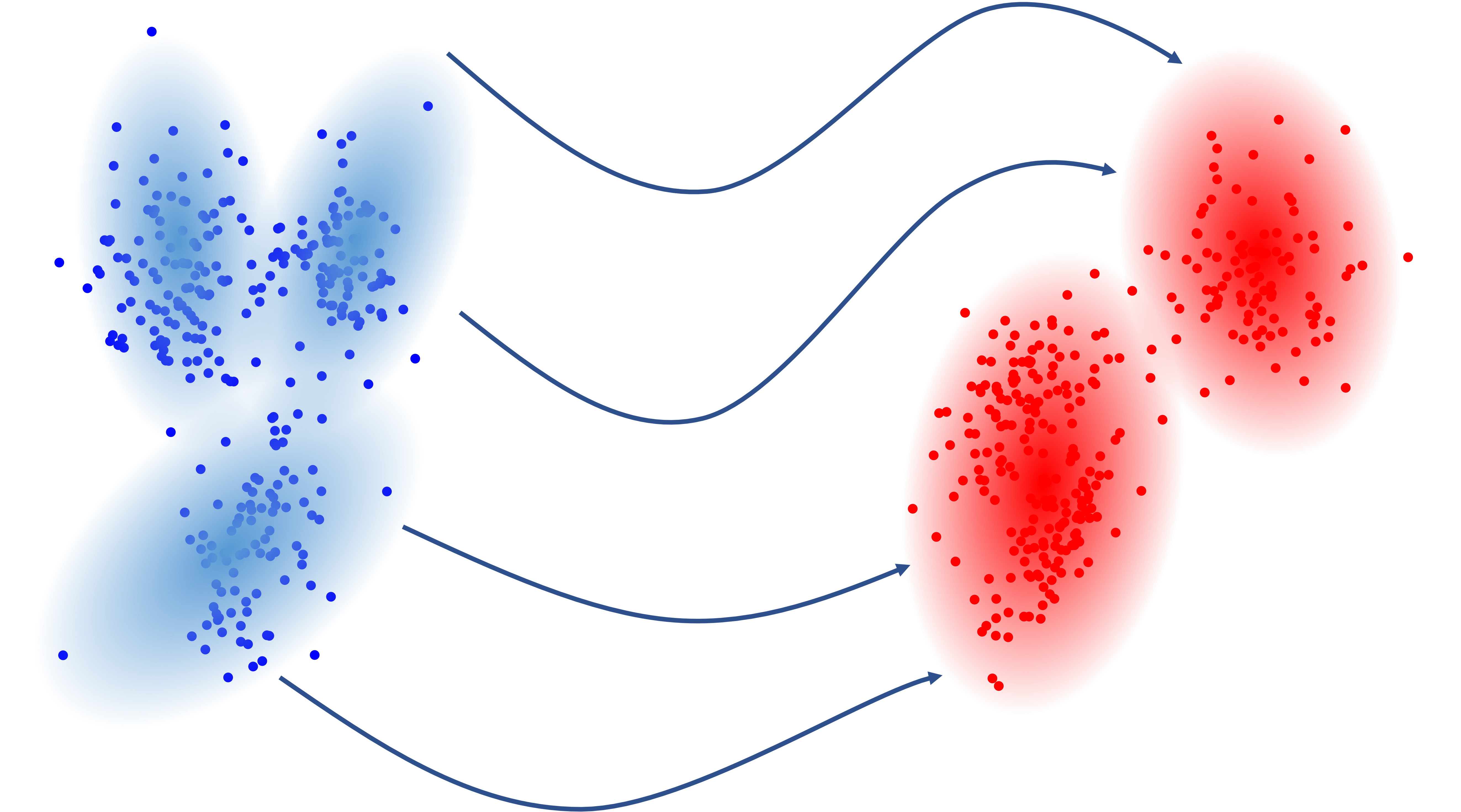}};
        \node (x0) at (-2, 2.2) {\small \textcolor{blue}{$x_0 \sim \mathbf{GMM}(\curly{p_i^0, \mu_i^0, \Sigma_i^0}_{i=0}^{2})$}};
        \node (xN) at (2.1, -2.2) {\small \textcolor{red}{$x_N \sim \mathbf{GMM}( \curly{p_i^N, \mu_i^N, \Sigma_i^N}_{i=0}^1 )$}};
    \end{tikzpicture}
    \caption{Graphical illustration of the multi-modal density control problem. 
    In this problem, a feedback control policy is sought to steer the uncertain initial state $x_0$ drawn from a (blue) GMM distribution to a terminal state $x_N$ drawn from another (red) GMM distribution in finite time. }
    \label{fig:GMM-Steering-illustration}
\end{figure}

\noindent \textbf{\textit{Literature Review:}} 
Density steering problems have received significant attention in the relevant literature. 
In \cite{p:debadyn2021discreteTimeLQRviaOT}, the authors consider a density steering problem with linear dynamics and a convex quadratic cost, recasting it as an optimal mass transport problem. The authors
of \cite{p:terpin2023DPinProbSpace} address the density steering problem treating it as a dynamic program over probability spaces. It is worth noting, however, that both of these approaches involve discretizing the continuous state-space, as computing the optimal transport map and solving recursively the Bellman equation requires a discrete state-space.

A special class of density steering problems is the so-called covariance steering (CS) problems \cite{p:chen2015optimalCSI,p:bakolas2018automatica} whose main objective is to steer the first two moments of the uncertain state of a dynamical system to desired quantities. CS problems have been studied extensively for both continuous-time \cite{p:chen2015optimalCSI} and discrete-time \cite{p:bakolas2018automatica,p:goldshtein2017finitehorizonCS} settings.
Constrained CS problems have been studied in \cite{p:bakolas2018automatica, p:okamoto2018chance, p:balci2024adf-journal}. Moreover, the squared Wasserstein distance has been used to formulate a soft constrained version of the CS problem in \cite{p:balci2022exact, p:halder2016finite-wasserstein}.
For computational efficiency, truncated affine disturbance feedback policies are used in CS problems in \cite{p:balci2024adf-journal}.
Both closed-form solutions \cite{p:chen2015optimalCSI, p:goldshtein2017finitehorizonCS} and optimization-based approaches \cite{p:bakolas2018automatica, p:balci2022exact, p:okamoto2018chance} to CS problems avoid state-space discretization and extensive sampling.
However, the main limitation of the CS methods is their inability to steer multi-modal distributions.


Density steering problems can be formulated over discrete state-spaces as Markov chain synthesis problems. The problem of characterizing a Markov chain that will realize the transfer to the desired probability distribution in the infinite horizon case can be cast as a convex semi-definite program (SDP) \cite{p:demir2015decentralizedProbabilisticDensityControl}, whereas in the finite horizon case, the problem is not necessarily convex and should be addressed as a general nonlinear program (NLP) \cite{p:hoshino2021MCWassersteinCarSharing, p:djeumou2022probabilisticSwarmGTL}.

In \cite{p:halder2021NonlinearDensity}, the authors address the density steering problem with nonlinear drift, deriving optimality conditions expressed through coupled PDEs which are solved by using the Feynman-Kac lemma and point cloud sampling. Meanwhile, \cite{p:sivaramakrishnan2022distributionSteeringGeneralDist} explores the distribution steering problem for linear systems under the influence of non-Gaussian noise, leveraging characteristic functions. Another approach, detailed in \cite{p:wu2023powerMomentSteering}, employs power moments to formulate convex optimization problems for steering general probability densities in one-dimensional settings. Furthermore, \cite{p:elamvazhuthi2015optimalCoverageControl} presents a PDE-based optimal robotic swarm coverage control policy, while \cite{p:bakshi2020stabilizingDensityControl} focuses on deriving the optimal density steering laws for systems with multiplicative noise for the infinite horizon case. 
Lastly, \cite{p:saravanos2023distributedLargeScale} introduces a hierarchical clustering-based density steering algorithm tailored for applications of distributed large-scale robotic networks.

The density steering problem using Gaussian mixture models has been investigated in \cite{p:abdulghafoor2023LargeScaleControl, p:zhu2021adaptive, p:yang2024riskaware-swarm}.
However, a notable limitation of \cite{p:abdulghafoor2023LargeScaleControl, p:zhu2021adaptive, p:yang2024riskaware-swarm} is that their proposed density steering methods do not explicitly account for the system's dynamics. Moreover, their approach relies on spatial discretization which results in high computational cost. Finally, it should be mentioned that a preliminary version of this paper \cite{p:balci2023density} studied the hard-constrained GMM steering problem in the absence of input/state constraints. In this extended version, we study constrained problem formulations and also present theoretical error bounds for the GMM approximations of the PDF of the (uncertain) state of linear systems.

\noindent \textbf{\textit{Main Contributions:}}
All the aforementioned methods for density steering, besides CS methods, involve state-space discretization, solving partial differential equations, or extensive sampling. 
In contrast, our approach offers a computationally efficient solution to the optimal multi-modal density steering problem without these complexities. 
Furthermore, except from \cite{p:ito2023entropicMPCOT}, none of the referenced papers consider control input or state constraints jointly with system dynamics. By contrast, the problem formulations in this paper address both constraints while explicitly accounting for system dynamics.

In this paper, we employ convex optimization techniques and covariance steering theory to tackle the proposed density steering problems. Initially, we revisit the unimodal density steering problem (covariance steering) and derive in closed-form the optimal state feedback policy that steers an initial state (Gaussian) distribution to a desired terminal state (Gaussian) distribution while minimizing a quadratic cost. Secondly, we introduce a class of randomized state feedback policies to reduce the primary problem into a finite-dimensional NLP. These policies ensure that the state density can be represented as a Gaussian mixture model throughout the time horizon. Lastly, we show that the finite-dimensional optimization problem obtained by utilizing the proposed policy corresponds to an LP for the hard-constrained problem whereas the other formulations give rise to bilinear programs. To solve these bilinear programs, we propose a block coordinate descent (BCD) solution technique.

Furthermore, we showcase the effectiveness of our methodology in steering arbitrary probability state distributions of linear dynamical systems approximated by GMMs computed by the expectation-maximization algorithm \cite{p:expectationMaximization}. 
We demonstrate that the error in the GMM approximation remains bounded after applying the proposed policy and provide the aforementioned bounds. 
Finally, we present the results of numerical experiments along with their computation times to demonstrate the efficacy of our approach.

\noindent \textbf{\textit{Organization of the Paper:}} 
Preliminary definitions, problem setup and formulations are given in Section \ref{s:problem-formulation}. In Section \ref{s:covariance-steering}, we revisit the optimal covariance steering for linear dynamical systems. In Section \ref{s:gmm-steering-policies}, we define a class of randomized policies under which the state distribution will remain a GMM at all future time steps. In Section \ref{s:formulation-nlp}, we demonstrate how the proposed density steering problems can be reduced to their corresponding NLPs which are subsequently solved by using BCD-based algorithms that are introduced in Section \ref{s:solution-procedure}. 
Error bounds for GMM approximations for arbitrary probability distributions are discussed in Section \ref{s:error}. 
Results of our numerical simulations are presented and discussed in Section \ref{s:numerical-simulations}.
Finally, the paper concludes with remarks in Section \ref{s:conclusion}.

\section{Problem Formulation}\label{s:problem-formulation}
\noindent \textbf{Notation:} $\R{n}$ ($\R{n \times m}$) denotes the space of $n$-dimensional real vectors ($n\times m$ matrices). $\Zplus$ represents the set of positive integers. 
The convex cone of positive definite (semi-definite) $n\times n$ matrices is denoted by $\S{n}^{++}$ ($\S{n}^{+}$). 
For any $x\in \R{n}$ and $Q \in \S{n}^{+}$, $\lVert x \rVert_Q := \sqrt{x\transpose Q x }$. $I\sub{n}$ denotes the identity matrix of size $n$. 
$\bm{1}_n$ denotes the $n$-dimensional vector of ones.
$\bm{0}$ denotes the zero matrix. Vertical concatenation of vectors or matrices $x_0, \dots, x_N$ is denoted as $\vertcat{x_0, \dots, x_N}$. 
The trace and determinant of a matrix $A \in \R{n \times n}$ are denoted by $\tr{A}$ and $\det (A)$, respectively. 
$\lVert A \rVert_{*}$ denotes the nuclear norm of a matrix $A \in \R{n \times m}$. $A \geq (\leq) \bm{0}$ denotes element-wise comparison for matrices $A \in \R{n \times m}$.
For symmetric matrices $A, B \in \R{n \times n} $, $A\succeq(\succ) B$ means that $A - B \in \S{n}^{+}(\S{n}^{++})$. $\bdiag{A_1, \dots, A_{N}}$ denotes the block diagonal matrix with diagonal blocks $A_1, \dots, A_{N}$. 
The expectation and covariance of a random variable $x$ are denoted as $\E{x}$ and $\Cov{x}$, respectively. 
The notation $x \sim \cN (\mu, \Sigma)$ means that $x$ is a Gaussian random variable with mean $\mu$ and covariance matrix $\Sigma$. $\Delta_n$ denotes the probability simplex in $\mathbb{R}^n$, where $\Delta_n:=\{[p_1,\dots, p_n]\transpose\in\mathbb{R}^n: \sum_{i=1}^n p_i = 1 ~\text{and}~ p_i\geq 0~ \forall i\}$.
When $x$ follows a Gaussian mixture model, we write $x \sim \mathbf{GMM}(\curly{p\sub{i}, \mu_i, \Sigma_i}\sub{i=0}\spr{n-1})$ where $\bm{p}_n := [p_0, \dots, p_{n-1}]\transpose \in \Delta_n$. 
The PDF of a random variable $x \in \R{n}$ evaluated at $x' \in \R{n}$ is $\P[x]{x'}$.
If $x\sim \mathcal{N}(\mu, \Sigma)$, we write $\P[\mathcal{N}]{x'; \mu, \Sigma}$ instead of $\P[x]{x'}$.
For random variables $x \in \R{n}$ and $y \in \R{m}$, we denote by $x|y=\Hat{y}$ the conditional random variable $x$ given $y=\Hat{y}$. $\mathcal{P} (\mathcal{A})$ denotes the set of all random variables over $\mathcal{A} \subseteq \R{n}$.

\noindent \textbf{Preliminaries:} 
Next, we provide some basic definitions from the OMT literature \cite{p:chen2018OT-GMM} that will be used throughout this paper. 
\begin{definition}\label{def:squared-wasserstein}
Let $\rho_x, \rho_y : \R{n} \rightarrow \R{}_{+}$ be PDFs of random variables $ \boldsymbol{x}, \boldsymbol{y} \in \R{n}$. The squared Wasserstein distance between $\rho_x$ and $\rho_y$ is denoted as $W_2^2(\rho_x, \rho_y)$ and defined as:
\begin{align}\label{eq:squared-wasserstein}
    W_2^2(\rho_x, \rho_y) = \inf_{\sigma \in \mathbb{H} (\rho_x, \rho_y)} \iint \lVert x - y \rVert_2^2 ~ \sigma(x, y)  \mathrm{d}x \mathrm{d}y 
\end{align}
where $\mathbb{H}(\rho_x, \rho_y)$ denotes the set of all PDFs over $\R{2 n}$ with finite second moments and marginals over $\rho_x$ and $\rho_y$, that is,
\begin{align*}
    \mathbb{H}(\rho_x, \rho_y) := &  \Big \{ \sigma(x, y) : \iint \lVert [x\transpose, y\transpose]\transpose \rVert_2^2 \sigma(x, y) \mathrm{d}x \mathrm{d} y < \infty,  \\
    & \int \sigma(x, y) \mathrm{d} x = \rho_y(y), \int \sigma(x, y) \mathrm{d}y = \rho_x(x) \Big \}.
\end{align*}
\end{definition}
To compute $W_2^2(\rho_x, \rho_y)$, one needs to solve the functional optimization problem in \eqref{eq:squared-wasserstein}, which is generally intractable except from a few special cases.
One such case occurs when $\rho_x(x) = \P[\cN]{x; \mu_x, \Sigma_x}$ and $\rho_y(y) := \P[\cN]{y; \mu_y, \Sigma_y}$, in which case
\begin{align}\label{eq:wasserstein-gaussian}
   W_\mathcal{N}^2(\mu_x, \Sigma_x, \mu_y, \Sigma_y) := & \lVert \mu_x - \mu_y \rVert_2^2 + \tr{\Sigma_x + \Sigma_y} \nonumber \\
    & -2 \tr{ ( \Sigma_y^{1/2} \Sigma_x \Sigma_y^{1/2} )^{1/2} }.
\end{align}

For arbitrary distributions over $\R{n}$ with $n > 1$, $W_2^2(\rho_x, \rho_y)$ must be computed numerically. For instance, one can represent the continuous PDFs $\rho_x$ and $\rho_y$ by finite sets of samples $\curly{x_i}_{i=1}^{N}$ and $\curly{y_i}_{i=1}^{M}$, respectively.
Additionally, let $\curly{p_i^x}_{i=1}^{N}$ and $\curly{p_i^{y}}_{i=1}^{M}$ represent the probability mass assigned to each $x_i$ and $y_i$, such that $\mathbf{p}^x = [p_1^x, \dots, p_N^x]\transpose \in \Delta_N$ and $\mathbf{p}^y = [p_1^y, \dots, p_M^y]\transpose \in \Delta_N$, respectively.
Then, $W_2^2(\rho_x, \rho_y)$ can be calculated by solving the following OMT problem: 
\begin{align}\label{eq:discrete-OMT-problem}
    \min_{\mathbf{M} \in \bm{\mathcal{M}}}  \sum_{i=1}^{N} \sum_{j=1}^{M} \mathbf{M}_{i, j} \mathbf{C}_{i, j}
\end{align}
where $\mathbf{C}_{i,j} = \lVert x_i - y_j \rVert_{2}^{2}$ and 
$\bm{\mathcal{M}} : = \curly{\mathbf{M} =[\mathbf{M}_{i, j}] \in \R{N \times M} : \mathbf{M} \geq \bm{0}, ~ \mathbf{M} \bm{1} = \mathbf{p}^x,   \mathbf{M}\transpose \bm{1} = \mathbf{p}^y }$.
Inspired by the discrete OMT problem \eqref{eq:discrete-OMT-problem}, the authors of \cite{p:chen2018OT-GMM} proposed a GMM-Wasserstein distance as a distance metric between GMMs. 

\begin{definition}\label{def:gmm-wasserstein}
    Let $ \bm{x} \sim \mathbf{GMM}(\curly{p_i^x, \mu_i^x, \Sigma_i^x}_{i=0}^{N-1} )$ and $\bm{y} \sim \mathbf{GMM}( \curly{p_j^y, \mu_j^y, \Sigma_j^y}_{j=0}^{M-1} )$ and let $\rho_x, \rho_y : \R{n} \rightarrow \R{}_+$ be their corresponding PDFs. 
    The GMM-Wasserstein distance between $\rho_x$ and $\rho_y$ is given as $W_{\mathrm{GMM}}(\rho_x, \rho_y) := \sqrt{ \sum_{i=1}^{N} \sum_{j=1}^{M} \mathbf{C}_{i, j} \mathbf{M}^{\star}_{i, j}}$ where $\mathbf{M}^{\star}_{i, j}$ is the optimal solution of the problem in \eqref{eq:discrete-OMT-problem}
    with $\mathbf{C}_{i, j} = W_\mathcal{N}^2 (\mu_i^x, \Sigma_i^x, \mu_j^y, \Sigma_j^y)$.
\end{definition}
Throughout the paper, we use the squared GMM-Wasserstein distance, given in Definition \ref{def:gmm-wasserstein}, to measure the distance between the PDFs of GMMs. 

\noindent \textbf{Problem Setup:}
We consider a discrete-time linear system:
\begin{equation}\label{eq:linear-system-eq}
    x\sub{k+1} = A_k x_k + B_k u_k
\end{equation}
where $x_k \in \R{n}$ and $u\sub{k} \in \R{m}$ are the state and input processes, respectively. 
We assume that $x\sub{0} \sim \mathbf{GMM}(\curly{p\sub{i}\spr{0}, \mu_{i}\spr{0}, \Sigma_{i}\spr{0}}\sub{i=0}\spr{r-1})$ such that $[p_0,\dots,p_{r-1}]\transpose \in \Delta_r$, $\mu\sub{i}\spr{0} \in \R{n}$ and $\Sigma\sub{i}\spr{0} \in \mathbb{S}\sub{n}\spr{++}$ for all $i \in \curly{0, \dots, n-1}$.
\begin{assumption}\label{assumption:linear-controllable}
    The system dynamics given in \eqref{eq:linear-system-eq} is controllable over a given problem horizon $N \in \mathbb{Z}_{+}$. 
    In other words, the controllability Grammian, $\mathcal{G}_{N:0}$, which is defined as:
    \begin{equation}\label{eq:ctrl-gram-def}
        \mathcal{G}_{N:0} = \sum_{k=0}^{N-1} \Phi_{N, k+1} B_k B_k\transpose \Phi_{N, k+1}\transpose  
    \end{equation}
    is non-singular with $\Phi_{k_2, k_1} := A_{{k_2}-1} A_{{k_2}-2} \dots A_{k_1}$, $\Phi_{k, k} = I_n$ for all $k_2, k_1 \in \mathbb{Z}^+$ such that $k_2 \geq k_1$.
\end{assumption}

A control policy with an horizon $N \in \mathbb{Z}\spr{+}$ for the system \eqref{eq:linear-system-eq} is defined as a sequence of control laws $\pi = \curly{\pi\sub{i}}\sub{i=0}\spr{N-1}$ where each $\pi\sub{i} : \R{n} \rightarrow \mathcal{P} (\R{m})$ is a function that maps the state $x_k$ to a random variable representing control inputs. 
The set of randomized control policies is denoted by $\Pi$.
Throughout the paper, we consider cost functions of the following form:
\begin{align}\label{eq:quadratic-cost-definition}
    J(X_{0:N}, U_{0:N-1}) = J_N(x_N) + \sum_{k=0}^{N-1} J_k(x_k, u_k) 
\end{align}
where $J_k(x_k, u_k) := \lVert u\sub{k} \rVert\sub{R\sub{k}}\spr{2} + \lVert x\sub{k} - x'\sub{k} \rVert\sub{Q\sub{k}}\spr{2}$ for all $k \in \curly{0, \dots, N-1}$, $J_{N}(x_N) := \lVert x_N - x'\sub{k} \rVert_{Q_N}^2$, $R_k \in \S{m}^{++}$ and $Q_k \in \S{n}^{+}$ for all $k$. Furthermore, $X\sub{0:N} = \curly{x\sub{k}}\sub{k=0}\spr{N}$ and $U\sub{0:N-1} = \curly{u\sub{k}}\sub{k=0}\spr{N-1}$. 
Next, we formulate the first problem considered in this paper:
\begin{problem}[Hard-Constrained GMM Density Steering]\label{prob:hard-GMM-density-steering}
Let $N \in \mathbb{Z}_{+}$, $A\sub{k} \in \R{n \times n}, B\sub{k} \in \R{n \times m}$, $R\sub{k} \in \S{m}\spr{++}, Q\sub{k} \in \S{n}\spr{+}$  be given for all $k \in \curly{0, \dots, N-1}$ and $Q_N \in \S{n}^{+}$. 
Also, let $r, t \in \mathbb{Z}_{+}$, $[p\sub{0}\spr{0}, \dots, p\sub{r-1}\spr{0}] \in \Delta_r$, $[p\sub{0}\spr{d}, \dots, p\sub{t-1}\spr{d}] \in \Delta_t$, $\curly{\mu\spr{0}\sub{i}}\sub{i=0}\spr{r-1}$, $\curly{\mu\spr{d}\sub{i}}\sub{i=0}\spr{t-1}$, 
$\curly{\Sigma\spr{0}\sub{i}}\sub{i=0}\spr{r-1}$, 
$\curly{\Sigma\spr{d}\sub{i}}\sub{i=0}\spr{t-1}$ such that $\mu\sub{i}\spr{0} , \mu\sub{i}\spr{d} \in \R{n}$ and $\Sigma\sub{i}\spr{0}, \Sigma\sub{i}\spr{d} \in \S{n}\spr{++}$ be given. 
Find an admissible control policy $\pi\spr{\star} \in \Pi$ that solves the following problem:
\begin{subequations}\label{eq:hard-GMM-density-steering-opt}
\begin{align}
    \min\sub{\pi \in \Pi} & ~~ \E{ J(X_{0:N}, U_{0:N-1}) } \\
    \mathrm{s.t.} & ~~ \eqref{eq:linear-system-eq} \nonumber \\
    & ~~ x\sub{0} \sim \mathbf{GMM}\left(\curly{p\sub{i}\spr{0}, \mu\spr{0}\sub{i}, \Sigma\spr{0}\sub{i}}\sub{i=0}\spr{r-1} \right) \label{eq:hard-problem-init-constr}\\
    & ~~ x\sub{N} \sim \mathbf{GMM}\left(\curly{p\sub{i}\spr{d}, \mu_i\spr{d}, \Sigma_i\spr{d}}\sub{i=0}\spr{t-1}\right) \label{eq:hard-problem-terminal-constr}\\
    & ~~ u_k = \pi\sub{k}(x\sub{k}) \label{eq:hard-problem-policy-constr}
\end{align}
\end{subequations}
\end{problem}

In the presence of state and/or input constraints, the feasibility of Problem \ref{prob:hard-GMM-density-steering} is not guaranteed.
For this reason, we also formulate a soft constrained version of the GMM density steering problem in which the main objective is to minimize the weighted sum of the quadratic cost given in \eqref{eq:quadratic-cost-definition} and the squared GMM-Wasserstein distance (Definition \ref{def:gmm-wasserstein}) between the terminal state distribution and the desired distribution. 
\begin{problem}[Soft-Constrained GMM Density Steering]\label{prob:soft-GMM-density-steering}
    Let $N \in \Zplus$, $\kappa > 0$, $A_k \in \R{n \times n}$, $B_k \in \R{n \times m}$, $R_k \in \S{m}^{++}$, $Q_k \in \S{n}^{+}$ be given for all $k \in \curly{0, \dots, N-1}$ and $Q_N \in \S{n}^{+}$. Also, let $r, t \in \Zplus$, $[p_0^0, \dots, p_{r-1}^0] \in \Delta_r$, $[p_0^d, \dots, p_{t-1}^d] \in \Delta_t$, 
    $\curly{\mu\spr{0}\sub{i}}\sub{i=0}\spr{r-1}$, $\curly{\mu\spr{d}\sub{i}}\sub{i=0}\spr{t-1}$, 
    $\curly{\Sigma\spr{0}\sub{i}}\sub{i=0}\spr{r-1}$, 
    $\curly{\Sigma\spr{d}\sub{i}}\sub{i=0}\spr{t-1}$ such that $\mu\sub{i}\spr{0} , \mu\sub{i}\spr{d} \in \R{n}$ and $\Sigma\sub{i}\spr{0}, \Sigma\sub{i}\spr{d} \in \S{n}\spr{++}$ be given. 
    Find an admissible control policy $\pi^{\star} \in \Pi$ that solves the following problem:
    \begin{subequations}
    \begin{align}\label{eq:soft-GMM-density-steering-opt}
        \min_{\pi \in \Pi} & ~~ \E{J(X_{0:N}, U_{0:N-1})} + \kappa W_\mathrm{GMM}^2 (\rho_N, \rho_d) \\
        \mathrm{s.t.} & ~~ \eqref{eq:linear-system-eq}, \eqref{eq:hard-problem-init-constr}, \eqref{eq:hard-problem-policy-constr} \nonumber \\
        & ~~ x_d \sim \mathbf{GMM}(\curly{p_i^d, \mu_i^d, \Sigma_i^d}_{i=0}^{t-1}) \label{eq:soft-prob-desired-constr}
    \end{align}
    \end{subequations}
    where $\rho_N, \rho_d$ are the PDFs of $x_N$ and $x_d$, respectively.
\end{problem}

Problem \ref{prob:soft-GMM-density-steering} can be solved to find a control policy such that constraints of the form $\E{J(X_{0:N,}, U_{0:N-1})} \leq c$, for some $c \geq 0$, are satisfied by adjusting appropriately the value of the parameter $\kappa$. However, the optimal value of the term $\E{J(X_{0:N,}, U_{0:N-1})}$ might be much smaller than $c$, if $\kappa$ is not large enough.

Next, we formulate another constrained density steering problem 
whose main objective is to minimize the squared GMM-Wasserstein distance between the terminal state distribution and the desired state distribution subject to a total quadratic cost constraint. 
\begin{problem}[Total Cost Constrained GMM-Density Steering]\label{prob:total-cost-constrained-GMM-density-steering}
    Let $N \in \Zplus$, $\kappa > 0$, $A_k \in \R{n \times n}$, $B_k \in \R{n \times m}$, $R_k \in \S{m}^{++}$, $Q_k \in \S{n}^{+}$ be given for all $k \in \curly{0, \dots, N-1}$ and $Q_N \in \S{n}^{+}$. 
    Also, let $r, t \in \Zplus$, $[p_0^0, \dots, p_{r-1}^0] \in \Delta_r$, $[p_0^d, \dots, p_{t-1}^d] \in \Delta_t$, 
    $\curly{\mu\spr{0}\sub{i}}\sub{i=0}\spr{r-1}$, $\curly{\mu\spr{d}\sub{i}}\sub{i=0}\spr{t-1}$, 
    $\curly{\Sigma\spr{0}\sub{i}}\sub{i=0}\spr{r-1}$, 
    $\curly{\Sigma\spr{d}\sub{i}}\sub{i=0}\spr{t-1}$ such that $\mu\sub{i}\spr{0} , \mu\sub{i}\spr{d} \in \R{n}$ and $\Sigma\sub{i}\spr{0}, \Sigma\sub{i}\spr{d} \in \S{n}\spr{++}$ be given. 
    Find an admissible control policy $\pi^{\star} \in \Pi$ that solves the following problem:
    \begin{subequations}
    \begin{align}\label{eq:total-cost-constrained-GMM-density-steering-opt}
        \min_{\pi \in \Pi} & ~~  W_\mathrm{GMM}^2 (\rho_N, \rho_d), ~~\mathrm{s.t.} ~~ \eqref{eq:linear-system-eq}, \eqref{eq:hard-problem-init-constr}, \eqref{eq:soft-prob-desired-constr}, \eqref{eq:hard-problem-policy-constr}  \\
        & ~~ \E{J(X_{0:N}, U_{0:N-1})} \leq \kappa
    \end{align}
    \end{subequations}
    where $\rho_N, \rho_d$ are the PDFs of $x_N$ and $x_d$, respectively.
\end{problem}

The last problem that we will study in this paper is similar to Problem \ref{prob:total-cost-constrained-GMM-density-steering} but in contrast with the latter, the state and input constraints are enforced separately for each time step.
\begin{problem}[Step Cost Constrained GMM-Density Steering]\label{prob:step-cost-constrained-GMM-density-steering}
    Let $N \in \Zplus$, $\kappa_k \in \R{}_{+}$, $A_k \in \R{n \times n}$, $B_k \in \R{n \times m}$, $R_k \in \S{m}^{++}$, $Q_k \in \S{n}^{+}$ be given for all $k \in \curly{0, \dots, N-1}$ and $Q_N \in \S{n}^{+}$. Also, let $r, t \in \Zplus$, $[p_0^0, \dots, p_{r-1}^0] \in \Delta_r$, $[p_0^d, \dots, p_{t-1}^d] \in \Delta_t$ 
    $\curly{\mu\spr{0}\sub{i}}\sub{i=0}\spr{r-1}$, $\curly{\mu\spr{d}\sub{i}}\sub{i=0}\spr{t-1}$, 
    $\curly{\Sigma\spr{0}\sub{i}}\sub{i=0}\spr{r-1}$, 
    $\curly{\Sigma\spr{d}\sub{i}}\sub{i=0}\spr{t-1}$ such that $\mu\sub{i}\spr{0} , \mu\sub{i}\spr{d} \in \R{n}$ and $\Sigma\sub{i}\spr{0}, \Sigma\sub{i}\spr{d} \in \S{n}\spr{++}$ be given. 
    Find an admissible control policy $\pi^{\star} \in \Pi$ that solves the following problem:
    \begin{subequations}
    \begin{align}\label{eq:step-cost-constrained-GMM-density-steering-opt}
        \min_{\pi \in \Pi} & ~~  W_\mathrm{GMM}^2 (\rho_N, \rho_d) ~~   \text{s.t.} ~~ \eqref{eq:linear-system-eq}, \eqref{eq:hard-problem-init-constr}, \eqref{eq:soft-prob-desired-constr}, \eqref{eq:hard-problem-policy-constr} \\
        & ~~ \E{J_k(x_k, u_k)} \leq \kappa_k,  \quad \forall k \in \curly{0, \dots, N-1} \label{eq:step-cost-constrained-GMM-density-steering-constr}
    \end{align}
    \end{subequations}
    where $\rho_N, \rho_d$ are the PDFs of $x_N$ and $x_d$, respectively.
\end{problem}

\begin{remark}
    It is worth mentioning that the solutions to Problems \ref{prob:soft-GMM-density-steering}-\ref{prob:step-cost-constrained-GMM-density-steering}, which may appear similar to each other, require different techniques, as will be shown in Section \ref{s:formulation-nlp}.
\end{remark}

\section{Optimal Covariance Steering for Linear Systems}\label{s:covariance-steering}
Optimal covariance steering problems for linear dynamical systems with quadratic cost functions have been extensively studied in the literature \cite{p:chen2015optimalCSI, p:goldshtein2017finitehorizonCS}. 
In this section, we expand upon the results of \cite{p:goldshtein2017finitehorizonCS} and derive a closed-form solution to the covariance steering problem, considering quadratic cost functions (in terms of the state and the control input). 
This results will be subsequently used to formulate the Problems defined in Section \ref{s:problem-formulation} as finite-dimensional optimization problems.
To begin, we revisit the formal definition of the Gaussian covariance steering problem:
\begin{problem}[Gaussian Covariance Steering]\label{prob:covariance-steering}
Let $\mu_0, \mu_d \in \R{n}$, $\Sigma\sub{0}, \Sigma\sub{d} \in \S{n}\spr{++}$, $R\sub{k} \in \S{m}\spr{++}$, $Q\sub{k} \in \S{n}\spr{+}$ for all $k \in \curly{0, \dots, N-1}$ and $Q_N \in \S{n}^{+}$ be given. 
Find an admissible control policy $\pi\spr{\star} \in \Pi$ that solves the following problem:
\begin{align}\label{eq:GaussianCovSteer-Opt-Problem}
    \min\sub{\pi \in \Pi} & ~~ \E{J(X_{0:N}, U_{0:N-1})} \\
    \mathrm{s.t.} & ~~ \eqref{eq:linear-system-eq}, ~~ x\sub{0} \sim \mathcal{N} (\mu_0, \Sigma_0),~~ x\sub{N} \sim \mathcal{N} (\mu_d, \Sigma_d). \nonumber
\end{align}
\end{problem}
In \cite{p:balci2022exact}, it is demonstrated that the optimal policy for the Gaussian covariance steering problem given in \eqref{eq:GaussianCovSteer-Opt-Problem} takes the form of a deterministic affine state feedback policy, expressed as $\pi_k(x_k) = \Bar{u}\sub{k} + K_k (x_k - \mu_k)$ where $\mu_k = \E{x_k}$. 
Furthermore, for deterministic linear systems, this affine state feedback policy can be equivalently expressed in terms of the initial state as $\pi_k(x_0) = \Bar{u}_k + L_k (x_0 - \mu_0)$ \cite{p:goldshtein2017finitehorizonCS}. 
Consequently, the optimal Gaussian covariance steering problem can be rewritten equivalently in terms of decision variables $\curly{\Bar{u}_k, L_k}_{k=0}^{N-1}$ as follows:
\begin{subequations}\label{eq:Gaussian-Cov-Steer-Finite-Dim}
\begin{align}
    \min\sub{\mathbf{\Bar{U}}, \mathbf{L}} & ~~ \mathbf{\Bar{U}}\transpose \mathbf{R} \mathbf{\Bar{U}} + \tr{ \mathbf{R} \mathbf{L} \Sigma_{0} \mathbf{L}\transpose } + \mathbf{\Tilde{X}}\transpose \mathbf{Q} \mathbf{\Tilde{X}} \nonumber \\
    & ~~ + \tr{\mathbf{Q} (\mathbf{\Gamma} + \mathbf{H_u} \mathbf{L}) \Sigma_0 (\mathbf{\Gamma} + \mathbf{H_u} \mathbf{L})\transpose} \\
    \text{s.t.} & ~~ \mu_d = \Phi_{N, 0} \mu_0 + \mathbf{B_N} \mathbf{\Bar{U}},  \label{eq:Gaussian-CS-constr-mean}\\
    & ~~ \Sigma_d = (\Phi_{N, 0} + \mathbf{B_N} \mathbf{L}) \Sigma_0 (\Phi_{N, 0} + \mathbf{B_N} \mathbf{L})\transpose, \label{eq:Gaussian-CS-constr-cov}
\end{align}
\end{subequations}
where $\Phi_{k_1, k_0} := A_{k_1-1} A_{k_1 - 2} \dots A_{k_0}$ for all $k_1 > k_0$, $\Phi_{k_0, k_0} = I_n$ for all $k_0 \in \Zplus$. 
$\mathbf{X} := \vertcat{x_0, \dots, x_N}$, $\mathbf{\Bar{X}} := \E{\mathbf{X}}$, $\mathbf{\Tilde{X}} := \mathbf{\Bar{X}} - \mathbf{X'}$, $\mathbf{X'} := \vertcat{x_0', \dots, x_N'}$, $\mathbf{U} := \vertcat{u_0, \dots, u_{N-1}}$, $\mathbf{\Bar{U}} := \E{\mathbf{U}} = \vertcat{\Bar{u}_0, \dots, \Bar{u}_{N-1}}$, $\mathbf{Q}:=\bdiag{Q_0, \dots, Q_{N}}$, $\mathbf{R} := \bdiag{R_0, \dots, R_{N-1}}$, $\mathbf{L}:=\vertcat{L_0, \dots, L_{N-1}}$, $\mathbf{B_N}:= [\Phi_{N, 1}B_0, \Phi_{N, 2}B_1, \dots, \Phi_{N, N}B_{N-1}]$. Note that $\mathbf{B_N} \mathbf{B}_{\mathbf{N}}\transpose = \mathcal{G}_{N:0}$ where $\mathcal{G}_{N:0}$ is defined in \eqref{eq:ctrl-gram-def}.
Furthermore, $\mathbf{\Gamma} := \vertcat{\Phi_{0,0}, \Phi_{1, 0}, \dots, \Phi_{N,0}}$ and $\mathbf{H_u}$ is given as:
\begin{align}\label{eq:Hu-def}
    \mathbf{H_u} = \left[ \begin{smallmatrix}
        \bm{0} & \bm{0} & \cdots & \bm{0} \\
        \Phi_{1,1} B_0 & \bm{0} & \cdots & \bm{0} \\
        \Phi_{2, 1} B_{0}& \Phi_{2, 2}B_{1} & \cdots & \bm{0} \\
        \vdots & \vdots &  \ddots & \vdots \\
        \Phi_{N, 1} B_0 & \Phi_{N,2}B_1 & \cdots & \Phi_{N, N}B_{N-1}
    \end{smallmatrix} 
    \right ]
\end{align}

Moreover, the concatenated vectors $\mathbf{X}$ and $\mathbf{U}$ satisfy 
\begin{subequations}\label{eq:concat-dynamics}
\begin{align}
    \mathbf{X}  = \mathbf{\Gamma} x_0 + \mathbf{H_u U}, ~~ \mathbf{U}  = \mathbf{L} \hspace{0.02cm} (x_0 - \mu_0)  + \mathbf{\Bar{U}}.
\end{align}
\end{subequations}

The constraints in \eqref{eq:Gaussian-CS-constr-mean} and \eqref{eq:Gaussian-CS-constr-cov} correspond to the mean and covariance steering constraints. 
Since the state mean depends on $\mathbf{\Bar{U}}$, the state covariance depends on $\mathbf{L}$, and the objective function is separable in $\mathbf{\Bar{U}}$ and $\mathbf{L}$, 
we conclude that the mean and covariance steering problems can be decoupled. 
In particular, the mean steering problem is formulated as:
\begin{align}\label{eq:Mean-Steering-Problem}
    \min_{\mathbf{\Bar{U}}} & ~~ J_{\mathrm{mean}}(\mathbf{\Bar{U}}; \mu_0) :=  \mathbf{\Bar{U}}\transpose \mathbf{R} \mathbf{\Bar{U}} \nonumber \\
    & ~ + (\mathbf{\Gamma}\mu_0 + \mathbf{H_u \Bar{U}} - \mathbf{X'})\transpose \mathbf{Q} \left( \boldsymbol{\Gamma}  \mu_0 + \mathbf{H_u} \mathbf{\Bar{U}} -\mathbf{X'} \right) \\
    \text{s.t.} & ~~ \eqref{eq:Gaussian-CS-constr-mean}. ~ \nonumber
\end{align}
Note that the problem in \eqref{eq:Mean-Steering-Problem} is a strictly convex quadratic program with affine equality constraints (since $R_k \in \S{m}\spr{++}$), whose closed-form solution can be obtained using the KKT conditions \cite{b:boyd_vandenberghe_2004}. The following proposition provides the optimal feed-forward control input $\mathbf{\Bar{U}}$ for the mean steering problem in \eqref{eq:Mean-Steering-Problem}.

\begin{proposition}\label{prop:optimal-mean-steering}
Under Assumption \ref{assumption:linear-controllable}, the optimal control sequence $\mathbf{\Bar{U}}\spr{\star}$ that solves problem \eqref{eq:Mean-Steering-Problem} is given by:
\begin{align}
    \Lambda\spr{\star} = &  2 (\mathbf{B_N} M^{-1} \mathbf{B}_{\mathbf{N}}\transpose)^{-1} \times \nonumber \\
    & ~~ (\mathbf{B_N} M^{-1} \mathbf{H_u}\transpose \mathbf{Q} Y + (\mu_d - \Phi_{N,0} \mu_0 )), \\
    \mathbf{\Bar{U}}\spr{\star} = &  (1/2) M^{-1} (\mathbf{B_N}\transpose \Lambda\spr{\star} - 2 \mathbf{H_u}\transpose \mathbf{Q} Y),
\end{align}
where $M = \mathbf{R} + \mathbf{H_u Q H_u}\transpose$, $Y = \mathbf{\Gamma} \mu_0 - \mathbf{X'}$.
Furthermore, the optimal value of the objective function is given as:
\begin{align}\label{eq:optimal-mean-steer-expr1}
    J_\mathrm{mean}^{\star}(\mu_0, \mu_d) := J_\mathrm{mean}(\mathbf{\Bar{U}}^{\star}; \mu_0) 
\end{align}
\end{proposition}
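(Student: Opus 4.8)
The plan is to solve the equality-constrained strictly convex quadratic program in \eqref{eq:Mean-Steering-Problem} directly via its KKT system. First I would rewrite the objective $J_{\mathrm{mean}}(\mathbf{\Bar{U}}; \mu_0)$ as a quadratic form in $\mathbf{\Bar{U}}$: expanding the second term and collecting, it becomes $\mathbf{\Bar{U}}\transpose M \mathbf{\Bar{U}} + 2 (\mathbf{H_u}\transpose \mathbf{Q} Y)\transpose \mathbf{\Bar{U}} + \text{const}$, where $M = \mathbf{R} + \mathbf{H_u Q H_u}\transpose$ and $Y = \mathbf{\Gamma}\mu_0 - \mathbf{X'}$. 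Since $\mathbf{R} \in \S{m}^{++}$ blockwise and $\mathbf{Q} \succeq \bm{0}$, the matrix $M$ is positive definite and hence invertible, which makes the program strictly convex with a unique minimizer.

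Next I would form the Lagrangian $\mathcal{L}(\mathbf{\Bar{U}}, \Lambda) = \mathbf{\Bar{U}}\transpose M \mathbf{\Bar{U}} + 2 Y\transpose \mathbf{Q} \mathbf{H_u} \mathbf{\Bar{U}} + \Lambda\transpose(\mu_d - \Phi_{N,0}\mu_0 - \mathbf{B_N}\mathbf{\Bar{U}})$, using the mean-steering constraint \eqref{eq:Gaussian-CS-constr-mean}. Stationarity in $\mathbf{\Bar{U}}$ gives $2 M \mathbf{\Bar{U}} + 2 \mathbf{H_u}\transpose \mathbf{Q} Y - \mathbf{B_N}\transpose \Lambda = \bm{0}$, i.e. $\mathbf{\Bar{U}} = (1/2) M^{-1}(\mathbf{B_N}\transpose \Lambda - 2\mathbf{H_u}\transpose \mathbf{Q} Y)$, which is exactly the claimed expression for $\mathbf{\Bar{U}}^{\star}$ once we identify the optimal multiplier. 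Substituting this back into the primal constraint $\mathbf{B_N}\mathbf{\Bar{U}} = \mu_d - \Phi_{N,0}\mu_0$ yields $(1/2)\mathbf{B_N} M^{-1} \mathbf{B_N}\transpose \Lambda - \mathbf{B_N} M^{-1}\mathbf{H_u}\transpose \mathbf{Q} Y = \mu_d - \Phi_{N,0}\mu_0$, and solving for $\Lambda$ gives $\Lambda^{\star} = 2(\mathbf{B_N} M^{-1}\mathbf{B_N}\transpose)^{-1}(\mathbf{B_N} M^{-1}\mathbf{H_u}\transpose \mathbf{Q} Y + \mu_d - \Phi_{N,0}\mu_0)$, matching the statement. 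The optimal objective value \eqref{eq:optimal-mean-steer-expr1} is then obtained by plugging $\mathbf{\Bar{U}}^{\star}$ into $J_{\mathrm{mean}}$.

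The one nontrivial point requiring Assumption \ref{assumption:linear-controllable} is the invertibility of $\mathbf{B_N} M^{-1}\mathbf{B_N}\transpose$. I would argue this as follows: controllability gives $\mathbf{B_N}\mathbf{B_N}\transpose = \mathcal{G}_{N:0} \succ \bm{0}$, so $\mathbf{B_N}$ has full row rank $n$; since $M \succ \bm{0}$, $M^{-1} \succ \bm{0}$, and for any nonzero $v \in \R{n}$ we have $v\transpose \mathbf{B_N} M^{-1}\mathbf{B_N}\transpose v = \lVert M^{-1/2}\mathbf{B_N}\transpose v\rVert_2^2 > 0$ because $\mathbf{B_N}\transpose v \neq \bm{0}$ by full row rank. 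Hence $\mathbf{B_N} M^{-1}\mathbf{B_N}\transpose \succ \bm{0}$ and is invertible, so $\Lambda^{\star}$ and therefore $\mathbf{\Bar{U}}^{\star}$ are well-defined; this also confirms the constraint \eqref{eq:Gaussian-CS-constr-mean} is feasible. Finally, because the program is strictly convex with affine constraints, the KKT point is the unique global minimizer, so no second-order check beyond $M \succ \bm{0}$ is needed. The main obstacle is purely bookkeeping — carefully expanding the quadratic form and tracking the factor-of-two conventions so the stationarity equation lines up with the stated formulas — rather than any conceptual difficulty.
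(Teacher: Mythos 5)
Your proof is correct and follows exactly the route the paper intends: the paper gives no explicit proof of Proposition \ref{prop:optimal-mean-steering}, stating only that \eqref{eq:Mean-Steering-Problem} is a strictly convex QP with affine equality constraints whose closed form follows from the KKT conditions, which is precisely the Lagrangian stationarity plus constraint-substitution argument you carry out. Your additional verification that $\mathbf{B_N} M^{-1} \mathbf{B_N}\transpose \succ \bm{0}$ under Assumption \ref{assumption:linear-controllable} (via full row rank of $\mathbf{B_N}$ from $\mathcal{G}_{N:0} \succ \bm{0}$) is the right way to use the controllability hypothesis and fills in the one detail the paper leaves implicit.
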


Next, we formulate the covariance steering problem:
\begin{align}\label{eq:Cov-Steering-Problem}
    \min_{\mathbf{L}} & ~~ J_{\mathrm{cov}}(\mathbf{L}; \Sigma_0) := \tr{\mathbf{R} \mathbf{L}  \Sigma_0 \mathbf{L}\transpose} \nonumber \\
    & ~~+\tr{ \mathbf{Q} (\mathbf{\Gamma} + \mathbf{H_u} \mathbf{L})\Sigma_0 (\mathbf{\Gamma} + \mathbf{H_u} \mathbf{L})\transpose } \\
    \text{s.t.} & ~~ \eqref{eq:Gaussian-CS-constr-cov}, ~ \nonumber
\end{align}
The objective function $J_{\mathrm{cov}}(\mathbf{L}; \Sigma_0)$ of the covariance steering problem given in \eqref{eq:Cov-Steering-Problem} is a convex quadratic function of the decision variable $\mathbf{L}$. 
However, the terminal covariance constraint in \eqref{eq:Gaussian-CS-constr-cov} is a non-convex quadratic equality constraint. The next proposition provides the closed-form solution to problem given in \eqref{eq:Cov-Steering-Problem} in terms of the problem parameters.
\begin{proposition}\label{prop:optimal-covariance-steering}
Under Assumption \ref{assumption:linear-controllable}, the optimal sequence of the feedback controller gains $\mathbf{L}\spr{\star}$ that solves the problem in \eqref{eq:Cov-Steering-Problem} is given by:
\begin{subequations}
\begin{align}
    \mathbf{L}\spr{\star} & = \mathbf{h} + \mathbf{D} Z, \\
    \mathbf{h} & = \mathbf{B}_{\mathbf{N}}\transpose \mathcal{G}_{N:0}\spr{-1} \left( \Sigma_{d}^{1/2} T \Sigma_0^{-1/2} - \Phi_{N,0} \right), \\
    Z & = - \left( \mathbf{D}\transpose M \mathbf{D} \right)^{-1} \mathbf{D}\transpose \left( M \mathbf{h} + \mathbf{H}_{\mathbf{u}}\transpose \mathbf{Q} \mathbf{\Gamma} \right), \\
    T & = - V_{\Omega} U\transpose_{\Omega}, \\
    \Omega & = \Sigma_0^{1/2} \left( \Theta_5\transpose \mathbf{Q} \mathbf{H_u} - \Theta_4\transpose \mathbf{R} \right) \Theta_1 \mathbf{B_N}\transpose \mathcal{G}_{N:0}\spr{-1} \Sigma_{d}\spr{1/2}, \label{eq:omega-def}
\end{align}
\end{subequations}
where $\mathbf{D} \in \R{mN \times mN - n}$ is an arbitrary full-rank matrix whose columns are orthogonal to the range space of $\mathbf{B}_{\mathbf{N}}\transpose$ (i.e., $\mathbf{B_N} \mathbf{D} = \bm{0}$). 
$M = \mathbf{R} + \mathbf{H}_\mathbf{u}\transpose \mathbf{Q} \mathbf{H_u}$, $\Omega = U_{\Omega} \Lambda_{\Omega} V_{\Omega}\transpose$ is the singular value decomposition of matrix $\Omega$, $\Theta_1 := I_{Nm} - \mathbf{D}(\mathbf{D}\transpose M \mathbf{D})^{-1} \mathbf{D}\transpose M$, $\Theta_2 := \mathbf{D}(\mathbf{D}\transpose M \mathbf{D})^{-1} \mathbf{D}\transpose \mathbf{H}_{\mathbf{u}}\transpose \mathbf{Q} \mathbf{\Gamma}$, $\Theta_3 := \mathbf{B}_{\mathbf{N}}\transpose \mathcal{G}_{N:0}\spr{-1} \Phi_{N:0}$, $\Theta_4:= \Theta_1 \Theta_3 + \Theta_2$, $\Theta_5 := \mathbf{\Gamma} - \mathbf{H_u} \Theta_4$.
Furthermore, the optimal value of the objective function is given as:
\begin{align}\label{eq:optimal-cov-steer-expr1}
    J_{\mathrm{cov}}\spr{\star} (\Sigma_0, \Sigma_d) = & ~\tr{ \mathbf{R} \left( \Theta_1 \mathcal{Z} \Theta_1\transpose + \Theta_4 \Sigma_0 \Theta_4\transpose \right) } \nonumber \\ 
    & + \tr{ \mathbf{Q} \left( \mathbf{H_u} \Theta_1 \mathcal{Z} \Theta_1\transpose \mathbf{H}_\mathbf{u}\transpose + \Theta_5 \Sigma_0 \Theta_5\transpose \right) } \nonumber \\
    & - 2 \lVert \Omega \rVert_{*},
\end{align}
where $ \mathcal{Z} := \mathbf{B}_{\mathbf{N}}\transpose \mathcal{G}_{N:0}\spr{-1} \Sigma_d \mathcal{G}_{N:0}\spr{-1} \mathbf{B_N} $.
\end{proposition}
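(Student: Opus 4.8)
The plan is to reduce the covariance steering problem \eqref{eq:Cov-Steering-Problem} to a minimization over the orthogonal group, exploiting the fact that the nonconvex quadratic equality \eqref{eq:Gaussian-CS-constr-cov} admits an explicit parametrization. First, since $\Sigma_0, \Sigma_d \in \S{n}^{++}$, the factorization identity $XX\transpose = YY\transpose$ for square matrices with $Y$ invertible holds iff $X = YT$ for some orthogonal $T$; applying this with $X = (\Phi_{N,0} + \mathbf{B_N}\mathbf{L})\Sigma_0^{1/2}$ and $Y = \Sigma_d^{1/2}$ shows that \eqref{eq:Gaussian-CS-constr-cov} is equivalent to the existence of $T \in \R{n\times n}$ with $TT\transpose = I_n$ such that $\mathbf{B_N}\mathbf{L} = \Sigma_d^{1/2} T \Sigma_0^{-1/2} - \Phi_{N,0}$. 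Under Assumption \ref{assumption:linear-controllable}, $\mathbf{B_N}$ has full row rank (equivalently $\mathcal{G}_{N:0} = \mathbf{B_N}\mathbf{B_N}\transpose \succ \bm{0}$), so for each $T$ this linear equation is solvable, with solution set $\mathbf{L} = \mathbf{h}(T) + \mathbf{D}Z$, where $\mathbf{h}(T) := \mathbf{B}_{\mathbf{N}}\transpose \mathcal{G}_{N:0}^{-1}(\Sigma_d^{1/2} T \Sigma_0^{-1/2} - \Phi_{N,0})$ is the minimum-norm particular solution (note $\mathbf{D}\transpose \mathbf{h}(T) = \bm{0}$), $\mathbf{D}$ is any basis matrix for $\ker\mathbf{B_N}$, and $Z$ is free. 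The problem thus becomes an unconstrained minimization of $J_{\mathrm{cov}}(\mathbf{h}(T) + \mathbf{D}Z;\Sigma_0)$ over $(T,Z)$ with $T$ orthogonal.

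Next, I fix $T$ and minimize over $Z$. With $M := \mathbf{R} + \mathbf{H}_{\mathbf{u}}\transpose\mathbf{Q}\mathbf{H_u} \succ \bm{0}$, the map $Z \mapsto J_{\mathrm{cov}}(\mathbf{h}(T)+\mathbf{D}Z;\Sigma_0)$ is a strictly convex quadratic, so its unique minimizer is obtained from the normal equations, giving $Z(T) = -(\mathbf{D}\transpose M\mathbf{D})^{-1}\mathbf{D}\transpose(M\mathbf{h}(T) + \mathbf{H}_{\mathbf{u}}\transpose\mathbf{Q}\mathbf{\Gamma})$, which matches the stated $Z$ once $\mathbf{h} = \mathbf{h}(T)$. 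Substituting $Z(T)$ back, the optimal $\mathbf{L}$ for a given $T$ takes the form $\mathbf{L}^{\star}(T) = \Theta_1 \mathbf{h}(T) - \Theta_2$ (with $\Theta_1 = I_{Nm} - \mathbf{D}(\mathbf{D}\transpose M\mathbf{D})^{-1}\mathbf{D}\transpose M$ the oblique projector along $\ker\mathbf{B_N}$), and plugging this into $J_{\mathrm{cov}}$ produces, after introducing $\Theta_3,\Theta_4,\Theta_5$ to collect the constant (in $T$) pieces, a reduced cost depending on $T$ alone.

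The crucial structural observation is that $T$ enters $\mathbf{L}^{\star}(T)$ only through the affine term $(\cdot)\,T\Sigma_0^{-1/2}$, so every $T$-quadratic contribution to $J_{\mathrm{cov}}$ appears as a product $(\cdot)\,T\Sigma_0^{-1/2}\,\Sigma_0\,\Sigma_0^{-1/2}T\transpose(\cdot) = (\cdot)\,TT\transpose(\cdot) = (\cdot)(\cdot)$, which is constant because $TT\transpose = I_n$; this is exactly what yields the constant $\mathcal{Z} = \mathbf{B}_{\mathbf{N}}\transpose\mathcal{G}_{N:0}^{-1}\Sigma_d\mathcal{G}_{N:0}^{-1}\mathbf{B_N}$ appearing in \eqref{eq:optimal-cov-steer-expr1}. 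Hence the reduced cost is \emph{affine} in $T$: it equals a constant $c_0$ (the $\mathcal{Z}$- and $\Sigma_0$-terms in \eqref{eq:optimal-cov-steer-expr1}) plus a linear functional of $T$ whose coefficient matrix is precisely $\Omega$ of \eqref{eq:omega-def}. Minimizing a linear functional over the orthogonal group is classical: by von Neumann's trace inequality, with the SVD $\Omega = U_\Omega\Lambda_\Omega V_\Omega\transpose$, the optimum is $-2\lVert\Omega\rVert_*$, attained at $T^{\star} = -V_\Omega U_\Omega\transpose$. Back-substituting $T^{\star}$ into $\mathbf{h}(T)$ and $Z(T)$ gives $\mathbf{L}^{\star} = \mathbf{h} + \mathbf{D}Z$ and the optimal value \eqref{eq:optimal-cov-steer-expr1}.

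I expect the main obstacle to be the algebraic bookkeeping of the fourth step: carefully collecting all $T$-linear contributions from both trace terms after the elimination of $Z$, verifying that the $T$-quadratic terms indeed cancel through $TT\transpose = I_n$, and confirming that the surviving linear coefficient is exactly $\Omega$ in \eqref{eq:omega-def} while the constant regroups as the $\mathbf{R}$- and $\mathbf{Q}$-traces in \eqref{eq:optimal-cov-steer-expr1}. The conceptual ingredients (orthogonal parametrization of the covariance constraint, convex elimination of the null-space variable $Z$, and nuclear-norm minimization over $T$) are routine; the risk lies entirely in the matrix manipulations that define the $\Theta_i$.
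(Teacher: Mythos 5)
Your proposal is correct and follows essentially the same route as the paper's proof: parametrize the covariance constraint by an orthogonal matrix $T$, decompose $\mathbf{L}$ into a particular solution plus a null-space term $\mathbf{D}Z$, eliminate $Z$ by solving the strictly convex quadratic in closed form, observe that the reduced cost is affine in $T$, and apply the von Neumann trace inequality to get $T^{\star} = -V_{\Omega}U_{\Omega}\transpose$ and the value $-2\lVert\Omega\rVert_{*}$. The only cosmetic difference is that the paper first writes $\mathbf{L} = \mathbf{B}_{\mathbf{N}}\transpose Y + \mathbf{D}Z$ and then solves for $Y$ in terms of $T$, whereas you solve the linear equation $\mathbf{B_N}\mathbf{L} = \Sigma_d^{1/2}T\Sigma_0^{-1/2} - \Phi_{N,0}$ directly via the minimum-norm solution, which amounts to the same thing.
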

\begin{proof}
Observe that \eqref{eq:Gaussian-CS-constr-cov} can be equivalently written as 
\begin{subequations}
\begin{align}\label{eq:prop-opt-cov-steer-rewritten-constr-cov}
    T = \Sigma_d^{-1/2} (\Phi_{N:0} + \mathbf{B_N} \mathbf{L}) \Sigma_0^{1/2}, ~~  T T\transpose = I_{n}.
\end{align}
Also, let $\mathbf{D} \in \R{mN \times mN - n}$ be a full-rank matrix such that $\mathbf{B_N} \mathbf{D} = \bm{0}$. 
Using $\mathbf{D}$, we can write $\mathbf{L} = \mathbf{B}_{\mathbf{N}}\transpose Y + \mathbf{D} Z$ where $Y\in \R{n \times n}$ and $Z \in \R{Nm - n \times n}$. 
Note that there is a one-to-one mapping between $\mathbf{L}$ and the pair $Y, Z$ since both $\mathbf{B_N}$ and $\mathbf{D}$ are full-rank and have orthogonal columns. 
Thus, we can rewrite \eqref{eq:prop-opt-cov-steer-rewritten-constr-cov} as $T =  \Sigma_d^{-1/2} (\Phi_{N:0} + \mathcal{G}_{N:0}Y)\Sigma_0$. 
Hence, 
\begin{equation}\label{eq:LofTandZ}
    \mathbf{L} = \mathbf{B}_N\transpose \mathcal{G}_{N:0}\spr{-1} \left( \Sigma_d\spr{1/2} T \Sigma_0\spr{-1/2} - \Phi_{N:0}\right) + \mathbf{D} Z.
\end{equation}
and thus, 
the problem in \eqref{eq:Cov-Steering-Problem} can be written as follows:
\begin{align}
    \min_{T \in \mathbf{{T}}, Z} & ~~ J_1(T, Z) 
\end{align}
where $J_1(T, Z) = J_{\mathrm{cov}} \big( \mathbf{B}_N\transpose \mathcal{G}_{N:0}\spr{-1} \big( \Sigma_d\spr{1/2} T \Sigma_0\spr{-1/2} - \Phi_{N:0} \big) + \mathbf{D} Z \big)$, $\mathbf{T} := \{T ~ | ~ T T\transpose = I_{n} \}$. 
Note that the objective function $J_1(T, Z)$ is jointly convex in $(T, Z)$. 
For a fixed $T$, $\min_{Z} J_1(T, Z)$ is an unconstrained convex quadratic program whose (global) minimizer $Z^{\star}(T)$ is given by:
\begin{align}
    Z^{\star}(T) = - (\mathbf{D}\transpose M \mathbf{D})^{-1} \mathbf{D}\transpose (M h(T) + \mathbf{H}_{\mathbf{u}}\transpose \mathbf{Q} \mathbf{\Gamma} ),
\end{align}
where $h(T) = \mathbf{B_N} \mathcal{G}_{N:0}\spr{-1} (\Sigma_d\spr{1/2} T \Sigma_0^{-1/2} - \Phi_{N,0})$.
By plugging the expression of $Z\spr{\star}(T)$ back into $J_{1}(T, Z)$, we obtain the following optimization problem:
\begin{align}
    \min_{T \in \mathbf{T}} & ~~ J_2(T) = J_1(T, Z\spr{\star}(T)).
\end{align}
Expanding $J_2(T)$, we obtain that $J_2(T) = C + 2 \tr{\Omega T} $ where the constant term $C = \tr{ \mathbf{R} \left( \Theta_1 \mathcal{Z} \Theta_1\transpose + \Theta_4 \Sigma_0 \Theta_4\transpose \right) } + \tr{ \mathbf{Q} \left( \mathbf{H_u} \Theta_1 \mathcal{Z} \Theta_1\transpose \mathbf{H}_\mathbf{u}\transpose + \Theta_5 \Sigma_0 \Theta_5\transpose \right) }$. 
Finally, from Von Neuman trace inequality \cite{p:mirsky1975vonNeumanTrace}, we obtain that $T^{\star} = \arg \min_{T T\transpose = I_n} \tr{\Omega T} = - V_{\Omega} U_{\Omega}\transpose$ and $\tr{\Omega T\spr{\star}} = \tr{\Lambda_{\Omega}} = \sum_i \sigma_i(\Omega) = \lVert \Omega \rVert_{*}$. 
\end{subequations}
\end{proof}

The optimal value of the performance index of the covariance steering problem in \eqref{eq:Cov-Steering-Problem}, $J\spr{\star}_\mathrm{cov}( \Sigma_0, \Sigma_d )$, can be alternatively computed by solving an associated SDP.
Before we proceed, we will introduce the following lemmas:
\begin{lemma}\label{lemma:sdp-nuc-norm}
    Let $\Omega \in \R{n \times n}$ be a non-singular matrix.
    Then, $-\lVert \Omega \rVert_*$ is equal to the optimal value of the following SDP: $\min_{L \in \R{n \times n}} \tr{L}$ s.t. $\left[ \begin{smallmatrix}
        \Omega \Omega\transpose & L \\
            L\transpose & I_n
    \end{smallmatrix} \right] \succeq \mathbf{0}$.
\end{lemma}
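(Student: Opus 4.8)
The plan is to show that the feasible set of the SDP forces $-L$ to be a lower bound (in the trace sense) on something whose optimal trace equals $\lVert\Omega\rVert_*$, and that this bound is attained. First I would recall the standard semidefinite characterization of the nuclear norm: for any matrix $M$, $\lVert M\rVert_* = \min\{\tfrac12(\tr{P}+\tr{Q}) : [\begin{smallmatrix} P & M \\ M\transpose & Q\end{smallmatrix}]\succeq\mathbf 0\}$. Here, since $\Omega$ is square and we have the specific structure with $\Omega\Omega\transpose$ in the $(1,1)$ block and $I_n$ in the $(2,2)$ block, I would instead work directly. Write $\Omega = U_\Omega \Lambda_\Omega V_\Omega\transpose$ for the SVD, so $\Omega\Omega\transpose = U_\Omega \Lambda_\Omega^2 U_\Omega\transpose$ and $\lVert\Omega\rVert_* = \tr{\Lambda_\Omega} = \sum_i \sigma_i(\Omega)$.

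The key step is the Schur complement. Since $I_n\succ\mathbf 0$, the block matrix condition $[\begin{smallmatrix}\Omega\Omega\transpose & L \\ L\transpose & I_n\end{smallmatrix}]\succeq\mathbf 0$ is equivalent to $\Omega\Omega\transpose - L L\transpose \succeq \mathbf 0$, i.e. $LL\transpose \preceq \Omega\Omega\transpose$. I would then argue that for any such $L$, $\tr{L} \geq -\lVert\Omega\rVert_*$, and equality is achievable. For the lower bound: from $LL\transpose\preceq\Omega\Omega\transpose$ one gets $\sigma_i(L)\leq\sigma_i(\Omega)$ for all $i$ (singular values are monotone under the Loewner order of the "squares"), hence $\tr{L}\geq -\sum_i\sigma_i(L)\geq -\sum_i\sigma_i(\Omega) = -\lVert\Omega\rVert_*$, where the first inequality is von Neumann's trace inequality / the fact that $\tr{L}\geq -\lVert L\rVert_*$. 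For attainment: take $L = -\Omega\Omega\transpose{}^{1/2}$ — wait, more cleanly, take $L := -(\Omega\Omega\transpose)^{1/2} = -U_\Omega\Lambda_\Omega U_\Omega\transpose$. Then $LL\transpose = \Omega\Omega\transpose$, so the Schur condition holds with equality in the $(1,1)$-block sense and the block matrix is PSD; and $\tr{L} = -\tr{\Lambda_\Omega} = -\lVert\Omega\rVert_*$. This shows the optimal value is exactly $-\lVert\Omega\rVert_*$.

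The main obstacle — really the only nontrivial point — is justifying $\tr{L}\geq -\lVert\Omega\rVert_*$ under the constraint $LL\transpose\preceq\Omega\Omega\transpose$ cleanly; the cleanest route is to invoke $\tr{L}\geq -\lVert L\rVert_*$ (itself a consequence of von Neumann's trace inequality, already cited in the paper as \cite{p:mirsky1975vonNeumanTrace}) together with monotonicity of singular values: $A\preceq B$ with $A,B\succeq\mathbf 0$ implies $\lambda_i(A)\leq\lambda_i(B)$, applied to $A = LL\transpose$, $B = \Omega\Omega\transpose$, giving $\sigma_i(L)^2\leq\sigma_i(\Omega)^2$ and hence $\lVert L\rVert_*\leq\lVert\Omega\rVert_*$. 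The nonsingularity hypothesis on $\Omega$ is not strictly needed for the inequality but guarantees $(\Omega\Omega\transpose)^{1/2}$ is well-defined and positive definite, making the attaining $L$ unambiguous; I would note this in passing. I expect the whole argument to be three or four lines once the Schur complement reduction is stated.
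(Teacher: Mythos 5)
Your proof is correct, and it reaches the same optimizer $L^\star=-(\Omega\Omega\transpose)^{1/2}=-U_\Omega\Lambda_\Omega U_\Omega\transpose$ as the paper, but by a genuinely different route after the common Schur-complement step. The paper exploits nonsingularity of $\Omega$ to perform the congruence/substitution $Y=\Omega^{-1}L$, turning the problem into $\min_Y \tr{\Omega Y}$ subject to $YY\transpose\preceq I_n$, and then invokes von Neumann's trace inequality to read off $Y^\star=-V_\Omega U_\Omega\transpose$ and the value $-\lVert\Omega\rVert_*$. You instead keep the constraint in the form $LL\transpose\preceq\Omega\Omega\transpose$, derive the lower bound $\tr{L}\geq-\lVert L\rVert_*\geq-\lVert\Omega\rVert_*$ from Weyl monotonicity of eigenvalues under the Loewner order plus the elementary bound $|\tr{L}|\leq\lVert L\rVert_*$, and exhibit a feasible point attaining it. What your approach buys is generality and robustness: it never inverts $\Omega$, so (as you note) the nonsingularity hypothesis becomes superfluous, and it avoids a small gap in the paper's argument --- the paper applies von Neumann as if minimizing over $YY\transpose=I_n$ when the constraint after substitution is actually $YY\transpose\preceq I_n$, which requires the additional (true but unstated) fact that the linear objective attains its minimum at an extreme point of the operator-norm ball, i.e., at an orthogonal matrix. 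What the paper's route buys is that the substitution $Y=\Omega^{-1}L$ is exactly the device reused in Lemma \ref{lemma:spd-equivalent-form} and Corollary \ref{corrolary:alternative-cov-steer-expr}, so the two proofs share machinery. Both arguments ultimately rest on von Neumann's inequality; yours just invokes it through the corollary $|\tr{L}|\leq\lVert L\rVert_*$ rather than directly on the bilinear form $\tr{\Omega Y}$.
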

\begin{proof}
    The constraint of the SDP given in Lemma \ref{lemma:sdp-nuc-norm} can be equivalently written as $\Omega \Omega\transpose - L L\transpose \succeq 0$ using the Schur's complement lemma. 
    By multiplying $\Omega^{-1}$ from the left and  $\Omega^{-\mathrm{T}}$ from the right, we obtain $I_n - \Omega^{-1} L L\transpose \Omega^{- \mathrm{T}} \succeq 0$.
    Then, applying variable transformation $Y = \Omega^{-1} L $, the SDP given in Lemma \ref{lemma:sdp-nuc-norm} can be rewritten as $\min_{Y} \tr{\Omega Y}$ s.t. $ I_n \succeq Y Y\transpose$.
    Finally, from Von Neumann trace inequality \cite{p:mirsky1975vonNeumanTrace}, the (global) minimizer of the latter SDP is given as $Y^\star = -V_{\Omega} U\transpose_{\Omega}$ where $U_\Omega\transpose D_\Omega V_\Omega$ is the SVD decomposition of $\Omega$ and $\tr{\Omega Y\spr{\star}} = -\lVert \Omega \rVert_{*} $.
\end{proof}
\begin{lemma}\label{lemma:spd-equivalent-form}
    Let $A \in \R{n \times n}$ be non-singular and $M \in \S{n}^{++}$. 
    Then, the SDP: $\min_{L \in \R{n \times n}} \tr{L}$ s.t. $\left[ \begin{smallmatrix}
        A M A\transpose & L \\ L\transpose & I_n
    \end{smallmatrix} \right] \succeq \mathbf{0}$ is equivalent to the SDP: $\min_{L \in \R{n \times n}} \tr{L}$ s.t. $\left[ \begin{smallmatrix}
        M & L \\ L\transpose & A A\transpose
    \end{smallmatrix} \right] \succeq \mathbf{0}$.  
\end{lemma}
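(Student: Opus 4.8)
The plan is to obtain the equivalence from a single congruence transformation of the two block linear matrix inequalities (LMIs), together with a bijective change of the matrix variable $L$ that leaves the linear objective $\tr{L}$ unchanged; beyond that, the only fact used is that $X \succeq \mathbf{0}$ if and only if $P X P\transpose \succeq \mathbf{0}$ for any nonsingular $P$ (no Schur complement is needed for this route, unlike in Lemma \ref{lemma:sdp-nuc-norm}).

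First I would record the identities that drive the transformation: since $A$ is nonsingular, $A\transpose A^{-\mathrm{T}} = I_n$, hence $A^{-1}(A M A\transpose) A^{-\mathrm{T}} = M$, while trivially $A \, I_n A\transpose = A A\transpose$. Then I would conjugate the first LMI by the nonsingular block-diagonal matrix $P := \bdiag{A^{-1}, A}$, obtaining
\[
P \begin{smat} A M A\transpose & L \\ L\transpose & I_n \end{smat} P\transpose = \begin{smat} M & A^{-1} L A\transpose \\ (A^{-1} L A\transpose)\transpose & A A\transpose \end{smat} .
\]
Since $P$ is nonsingular, the right-hand side is positive semidefinite precisely when the first LMI holds. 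Setting $\tilde L := A^{-1} L A\transpose$ — a bijection on $\R{n \times n}$ with inverse $L = A \tilde L A^{-\mathrm{T}}$, because $A$ is invertible — the feasible set of the first SDP is carried onto the feasible set of the second SDP.

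The remaining and only delicate point is to check that this change of variables preserves the cost, i.e.\ that $\tr{L} = \tr{\tilde L}$ along the bijection. By cyclicity of the trace, $\tr{L} = \tr{A \tilde L A^{-\mathrm{T}}} = \tr{\tilde L \, A^{-\mathrm{T}} A}$, so the equality reduces to $A^{-\mathrm{T}} A = I_n$, i.e.\ to $A$ being symmetric (more generally, normal). I expect this to be the main obstacle. For the matrices that arise downstream — where $A$ is a symmetric square root such as $\Sigma_0^{1/2}$ or $\Sigma_d^{1/2}$ — symmetry holds and the argument closes as written; for a genuinely non-symmetric nonsingular $A$ one should instead conjugate by $\bdiag{A^{-1}, A\transpose}$, which yields $A\transpose A$ in the $(2,2)$ block but makes trace-invariance automatic via $\tr{A \tilde L A^{-1}} = \tr{\tilde L}$. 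So in the write-up I would either append ``$A$ symmetric'' to the hypotheses or state the conclusion with $A\transpose A$ in place of $A A\transpose$. A useful end check is that, by Lemma \ref{lemma:sdp-nuc-norm} with $\Omega = A M^{1/2}$, both SDPs should evaluate to $-\tr{(A M A\transpose)^{1/2}}$ whenever the equivalence is valid.
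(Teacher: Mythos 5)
Your proof is correct and in fact sharper than the paper's. The paper reaches the same place by a longer route: it applies the Schur complement to the first LMI, multiplies the resulting inequality $AMA\transpose - LL\transpose \succeq 0$ by $A^{-1}$ on the left and $A^{-\mathrm{T}}$ on the right, substitutes $L = AYA^{-1}$, and applies the Schur complement again. Your single block congruence by $\bdiag{A^{-1}, A}$ (or $\bdiag{A^{-1}, A\transpose}$) collapses these steps into one and, more importantly, makes the trace-preservation issue explicit rather than leaving it buried in the choice of variable transformation.

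Your diagnosis of that issue is right, and it is worth stating plainly: with $AA\transpose$ in the $(2,2)$ block, the lemma is false for general non-singular $A$. By Lemma \ref{lemma:sdp-nuc-norm} the first SDP has optimal value $-\tr{(AMA\transpose)^{1/2}} = -\sum_i \sqrt{\lambda_i(AMA\transpose)}$, whereas the second evaluates to $-\sum_i\sqrt{\lambda_i(A\transpose M A)}$; taking $M = \mathrm{diag}(1,2)$ and $A = \left[\begin{smallmatrix}1 & 1\\ 0 & 1\end{smallmatrix}\right]$ gives $-\sqrt{5+2\sqrt{2}}$ versus $-\sqrt{4+2\sqrt{2}}$. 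The paper's own substitution $L = AYA^{-1}$ actually yields $M - Y(A\transpose A)^{-1}Y\transpose \succeq 0$, hence $A\transpose A$ — not $AA\transpose$ — in the $(2,2)$ block, so the displayed conclusion is inconsistent with its own derivation. The correct general statement is the one you propose, with $A\transpose A$ (equivalently, one may keep $AA\transpose$ under the added hypothesis that $A$ is symmetric, or more generally normal). Since the lemma is only invoked in Corollary \ref{corrolary:alternative-cov-steer-expr} with $A = \Sigma_0^{1/2}$ symmetric, nothing downstream is affected. Your end check is also consistent: $\lambda(A\transpose A M) = \lambda(AMA\transpose)$ by cyclicity, so the corrected pair of SDPs both evaluate to $-\tr{(AMA\transpose)^{1/2}}$.
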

\begin{proof}
    By applying Schur's complement lemma and multiplying the resulting inequality with $A^{-1}$ from the left and $A^{-\mathrm{T}}$ from the right, we obtain $M - A^{-1} L L\transpose A^{-\mathrm{T}} \succeq 0$. 
    Then, by applying the variable transformation $L := A Y A^{-1}$ and Schur's complement lemma, we obtain the SDP constraint $\left[ \begin{smallmatrix}
        M & L \\
        L\transpose & A A\transpose
    \end{smallmatrix} \right] \succeq 0$. 
    By the cyclic permutation property of the trace operator, the objective function can be rewritten as $\tr{AYA^{-1}} = \tr{Y}$ which concludes the proof.
\end{proof}

The following result follows readily from Lemmas \ref{lemma:sdp-nuc-norm} and \ref{lemma:spd-equivalent-form}.
\begin{corollary}\label{corrolary:alternative-cov-steer-expr}
    The optimal covariance steering cost $J_\mathrm{cov}^\star (\Sigma_0, \Sigma_d)$ defined in \eqref{eq:optimal-cov-steer-expr1} for Problem \ref{prob:covariance-steering} from initial covariance matrix $\Sigma_0 \in \S{n}^{++}$ to $\Sigma_d \in \S{n}^{++}$ for the linear dynamical system \eqref{eq:linear-system-eq} is equal to the optimal value of the following SDP:
    \begin{subequations}\label{eq:alternative-cov-steer-expr}
    \begin{align}
        \min_{L \in \R{n \times n}} & ~ \tr{\Theta_6 \Sigma_d} + \tr{\Theta_7 \Sigma_0}  +  \tr{ L + L\transpose } \label{eq:corollary-alternative-cov-steer-expr-objective}\\
        \text{s.t.} & ~  \begin{bmatrix}
            \Theta_8 \Sigma_d \Theta_8\transpose & L \\
            L\transpose & \Sigma_0
        \end{bmatrix} \succeq \mathbf{0}. \label{eq:corollary-alternative-cov-steer-expr-constr}
    \end{align}
    \end{subequations}
    where $L \in \R{n \times n}$, $\Theta_6 := \mathcal{G}_{N:0}^{-1} \mathbf{B_N} \Theta_1\transpose M \Theta_1 \mathbf{B_N}\transpose  \mathcal{G}_{N:0}^{-\mathrm{T}}$, $M = \mathbf{R}  + \mathbf{H_u} \mathbf{Q} \mathbf{H_u}\transpose$, $\Theta_7 := \Theta_4\transpose \mathbf{R} \Theta_4 + \Theta_5\transpose \mathbf{Q} \Theta_5$ and $\Theta_8 := (\Theta_5 \mathbf{Q} \mathbf{H_u} - \Theta_4\transpose \mathbf{R}) \Theta_1 \mathbf{B_N}\transpose \mathcal{G}_{0:N}^{-1}$.
\end{corollary}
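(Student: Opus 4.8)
The plan is to split the claim into two pieces: first, rewrite the closed-form value $J_\mathrm{cov}^\star(\Sigma_0,\Sigma_d)$ from \eqref{eq:optimal-cov-steer-expr1} so that the two trace terms not involving the nuclear norm become $\tr{\Theta_6\Sigma_d} + \tr{\Theta_7\Sigma_0}$; and second, show that $-2\lVert\Omega\rVert_*$ coincides with the optimal value of the $L$-minimization appearing in \eqref{eq:alternative-cov-steer-expr}, which is where Lemmas \ref{lemma:spd-equivalent-form} and \ref{lemma:sdp-nuc-norm} enter. Combining the two pieces and noting $\tr{L+L\transpose}=2\tr{L}$ then yields the result.

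For the first piece, I would use the cyclic invariance of the trace to merge the $\mathcal{Z}$-dependent terms, $\tr{\mathbf{R}\,\Theta_1\mathcal{Z}\Theta_1\transpose} + \tr{\mathbf{Q}\,\mathbf{H_u}\Theta_1\mathcal{Z}\Theta_1\transpose\mathbf{H_u}\transpose} = \tr{(\mathbf{R}+\mathbf{H_u}\transpose\mathbf{Q}\mathbf{H_u})\Theta_1\mathcal{Z}\Theta_1\transpose} = \tr{M\,\Theta_1\mathcal{Z}\Theta_1\transpose}$, then substitute $\mathcal{Z}=\mathbf{B_N}\transpose\mathcal{G}_{N:0}^{-1}\Sigma_d\mathcal{G}_{N:0}^{-1}\mathbf{B_N}$ and cycle once more to obtain $\tr{\mathcal{G}_{N:0}^{-1}\mathbf{B_N}\Theta_1\transpose M\,\Theta_1\mathbf{B_N}\transpose\mathcal{G}_{N:0}^{-\mathrm{T}}\Sigma_d} = \tr{\Theta_6\Sigma_d}$, using that $\mathcal{G}_{N:0}$ is symmetric. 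Likewise, $\tr{\mathbf{R}\,\Theta_4\Sigma_0\Theta_4\transpose} + \tr{\mathbf{Q}\,\Theta_5\Sigma_0\Theta_5\transpose} = \tr{(\Theta_4\transpose\mathbf{R}\Theta_4 + \Theta_5\transpose\mathbf{Q}\Theta_5)\Sigma_0} = \tr{\Theta_7\Sigma_0}$. Hence $J_\mathrm{cov}^\star(\Sigma_0,\Sigma_d) = \tr{\Theta_6\Sigma_d} + \tr{\Theta_7\Sigma_0} - 2\lVert\Omega\rVert_*$.

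For the second piece, I would read off from \eqref{eq:omega-def} and the definition of $\Theta_8$ that $\Omega = \Sigma_0^{1/2}\,\Theta_8\,\Sigma_d^{1/2}$, so that $\Omega\Omega\transpose = \Sigma_0^{1/2}(\Theta_8\Sigma_d\Theta_8\transpose)\Sigma_0^{1/2}$. Applying Lemma \ref{lemma:spd-equivalent-form} with $A=\Sigma_0^{1/2}$ and $M=\Theta_8\Sigma_d\Theta_8\transpose$ shows that $\min_L \tr{L}$ subject to $\begin{smat}\Theta_8\Sigma_d\Theta_8\transpose & L \\ L\transpose & \Sigma_0\end{smat}\succeq\mathbf{0}$ has the same optimal value as $\min_L\tr{L}$ subject to $\begin{smat}\Omega\Omega\transpose & L \\ L\transpose & I_n\end{smat}\succeq\mathbf{0}$, and Lemma \ref{lemma:sdp-nuc-norm} identifies that value as $-\lVert\Omega\rVert_*$. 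Since the objective in \eqref{eq:corollary-alternative-cov-steer-expr-objective} equals $\tr{\Theta_6\Sigma_d}+\tr{\Theta_7\Sigma_0}+2\tr{L}$, minimizing over $L$ returns $\tr{\Theta_6\Sigma_d}+\tr{\Theta_7\Sigma_0}-2\lVert\Omega\rVert_* = J_\mathrm{cov}^\star(\Sigma_0,\Sigma_d)$, which is the claim.

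The part I expect to be the main obstacle is the first piece: the trace bookkeeping requires careful tracking of which factors may be cycled and confirming that the placement of transposes in the definitions of $M$, $\Theta_6$, $\Theta_7$, and $\Theta_8$ (together with the symmetry of $\mathcal{G}_{N:0}$) matches the expansion of \eqref{eq:optimal-cov-steer-expr1} and the expression \eqref{eq:omega-def} for $\Omega$ exactly. I would also invoke the standing hypotheses of the two lemmas, namely $\Sigma_0,\Sigma_d\in\S{n}^{++}$ (which are given) and the nonsingularity of $\Omega$ (equivalently of $\Theta_8$), which I would state explicitly.
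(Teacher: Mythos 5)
Your proposal is correct and follows essentially the same route as the paper's proof: expand $\mathcal{Z}$ and use cyclic invariance of the trace to obtain $\tr{\Theta_6\Sigma_d}+\tr{\Theta_7\Sigma_0}$, factor $\Omega=\Sigma_0^{1/2}\Theta_8\Sigma_d^{1/2}$, and then chain Lemmas \ref{lemma:sdp-nuc-norm} and \ref{lemma:spd-equivalent-form} (you merely read the chain in the opposite order) to identify $-2\lVert\Omega\rVert_*$ with the optimal value of the $L$-minimization. Your explicit remark that Lemma \ref{lemma:sdp-nuc-norm} needs $\Omega$ nonsingular is a reasonable addition that the paper leaves implicit.
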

\begin{proof}
    First, expand the term $\mathcal{Z}$ which is defined in Proposition \ref{prop:optimal-covariance-steering}. 
    By using the cyclic permutation property of the trace operator, we obtain the first two terms in the objective function in \eqref{eq:corollary-alternative-cov-steer-expr-objective}. 
    Then, observe that $\Omega = \Sigma_0^{1/2} \Theta_8 \Omega_d^{1/2}$ where $\Omega$ is defined in \eqref{eq:omega-def}. 
    Lemma \ref{lemma:sdp-nuc-norm} implies that the third term $-2 \lVert \Omega \rVert_*$ is equal to the optimal value of the optimization problem: $\min_{L} 2\tr{L}$ s.t. $\left[ \begin{smallmatrix}
        \Theta_8 \Sigma_d \Theta_8\transpose & L \\ L\transpose & I_n
    \end{smallmatrix} \right] \succeq 0$. 
    Finally, applying Lemma \ref{lemma:spd-equivalent-form} to the obtained SDP and using the equality $\tr{L} = \tr{L\transpose}$, we obtain \eqref{eq:alternative-cov-steer-expr}.
\end{proof}

The SDP in Corollary \ref{corrolary:alternative-cov-steer-expr} will be used to formulate Problems \ref{prob:soft-GMM-density-steering}-\ref{prob:step-cost-constrained-GMM-density-steering} as finite-dimensional optimization problems in Section \ref{s:formulation-nlp}.

\begin{remark}
In the special case in which $\mathbf{Q} = 0$, the problem in \eqref{eq:Cov-Steering-Problem} is equivalent to the covariance steering problem studied in \cite{p:goldshtein2017finitehorizonCS}.
Thus, the optimal policy derived in Proposition \ref{prop:optimal-covariance-steering} is the same optimal policy defined in \cite[Eq. (20)]{p:goldshtein2017finitehorizonCS}.
\end{remark}

\section{GMM Steering Policies}\label{s:gmm-steering-policies}
To reduce Problems \ref{prob:hard-GMM-density-steering}-\ref{prob:step-cost-constrained-GMM-density-steering} into tractable finite-dimensional optimization problems, we propose an admissible set of control policies $\Pi_{r} \subset \Pi$ consisting of randomized control policies, where each $\pi \in \Pi_r$ is a sequence of  control laws, $\curly{\pi_{0}, \pi_{1} , \dots, \pi_{N-1} }$, such that each $\pi_{k} : \R{n} \rightarrow \mathcal{P} (\R{m})$, with
\begin{align}\label{eq:gmm-policy-definition}
    \pi_{k} (x_0) =  L_k^{i, j} (x_0 - \Bar{\mu}^{i}_0) + \Bar{u}_k^{i, j} ~~ \text{w.p.} ~~ \gamma_{i, j}(x_0)
\end{align}
where $L_{k}\spr{i, j} \in \R{m \times n}$, $\Bar{u}_k\spr{i, j} \in \R{m}$ for all $k \in \curly{0, \dots, N-1}, i \in \curly{0, \dots, r-1}$ and $j \in \curly{0, \dots, t-1}$.
Furthermore, $\gamma_{i, j} : \R{n} \rightarrow \R{} $ is given by:
\begin{align}\label{eq:gamma-def}
\gamma_{i, j} (x_0) = \lambda_{i, j} \ell_{i}(x_0),
\end{align}
where $\ell_i(x_0) := \frac{p_i \P[\mathcal{N}]{x_0; \Bar{\mu}_0\spr{i}, \Bar{\Sigma}_0\spr{i}}}{\sum_{i=0}\spr{r-1} p_i \P[\mathcal{N}]{x_0 ; \Bar{\mu}_0^{i}, \Bar{\Sigma}_0\spr{i}}}$, $\sum_{j} \lambda_{i, j} = 1$ $\forall i$ and $\lambda_{i, j}, \geq 0$ $\forall (i,j)$. Note that $\sum_{i, j} \gamma_{i,j} = 1$ and $\gamma_{i, j} \geq 0$ $\forall (i,j)$, and in addition, the policy $\pi$ determines a probability distribution over the control sequences for a given $x_0$.

The policy defined in \eqref{eq:gmm-policy-definition} is based on the intuition that under affine state feedback control policies, the state distribution $x_k$ will remain Gaussian, provided that the initial state distribution is also Gaussian. 
The term $\ell_i(x_0)$ in \eqref{eq:gamma-def} represents the likelihood that the initial state $x_0$ will be drawn from the $i$th component of the GMM with components $( \curly{p_i, \Bar{\mu}_i, \Bar{\Sigma}_i}\sub{i=0}\spr{r-1})$.
Consequently, the control policy in \eqref{eq:gmm-policy-definition} is a valid choice for steering probability distributions described by GMMs.
The following proposition states that if a policy $\pi \in \Pi_r$ is applied to the dynamical system in \eqref{eq:linear-system-eq} whose initial state is sampled from $\mathbf{GMM}(\curly{p_i, \mu^{0}_i, \Sigma^{0}_i}_{i=0}\spr{r-1})$, then the terminal state $x_N \sim \mathbf{GMM}(\curly{q_i, \mu_i\spr{f}, \Sigma_i\spr{f}}_{i=0}\spr{t-1})$ whose parameters are determined by the policy $\pi \in \Pi_r$.

\begin{proposition}\label{prop:gmm-steering-proposition}
Let $x_0 \in \R{n}$ be the initial state of the system given in \eqref{eq:linear-system-eq} such that $x_0 \sim \mathbf{GMM}(\curly{p_i, \mu_i\spr{0}, \Sigma_i\spr{0}}_{i=0}\spr{r-1})$ and $\pi \in \Pi_r$ with parameters $(\curly{\Bar{\mu}_i, \Bar{\Sigma}_i}_{i=0}\spr{r-1},$ $\curly{\lambda_{i, j}}_{i=0, j=0}\spr{r-1, t-1},$ $\curly{\Bar{u}\sub{k}\spr{i,j}, L\sub{k}\spr{i,j}}_{i=0, j=0, k=0}\spr{r-1, t-1, N-1})$ and
$\mu_i\spr{0} = \Bar{\mu}_{i}$, $\Sigma_i\spr{0} = \Bar{\Sigma}_i$ $\forall i \in \curly{0, \dots, r-1}$. 
Furthermore, let $u_k = \pi_k(x_0)$ $\forall k \in \curly{0, \dots, N-1}$, then $x_N \sim \mathbf{GMM}(\curly{q_j, \mu_j\spr{f}, \Sigma_j\spr{f}})$ such that 
\begin{subequations}
\begin{align}
    q_j & = \sum_{i=0}\spr{r-1} p_i \lambda_{i, j} \label{eq:prop-gmm-steer-eq1}, \\
    \mu\spr{f}_{j} & = \Phi_{N:0} \mu_{i}\spr{0} + \mathbf{B_N} \mathbf{\Bar{U}}\sub{i, j},  \label{eq:prop-gmm-steer-eq2} \\
    \Sigma\spr{f}_{j} & = (\Phi_{N:0} + \mathbf{B_N} \mathbf{L}\sub{i, j}) \Sigma_{i}\spr{0} (\Phi_{N:0} + \mathbf{B_N} \mathbf{L}\sub{i, j})\transpose, \label{eq:prop-gmm-steer-eq3}
\end{align}
\end{subequations}
where \eqref{eq:prop-gmm-steer-eq1}-\eqref{eq:prop-gmm-steer-eq3} hold  for all $j \in \curly{0, \dots, t-1}$, $\mathbf{L}_{i, j} = \vertcat{L_0\spr{i, j}, \dots L_{N-1}^{i,j}}$ and $\mathbf{\Bar{U}}_{i, j} = \vertcat{\Bar{u}_0\spr{i, j}, \dots, \Bar{u}_{N-1}\spr{i, j}}$.
\end{proposition}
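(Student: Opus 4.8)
The plan is to invoke the law of total probability, treating the randomization built into the policy \eqref{eq:gmm-policy-definition} as a latent discrete ``branch'' variable and thereby reducing the analysis on each branch to the deterministic affine-feedback case already encoded in the concatenated dynamics \eqref{eq:concat-dynamics}. First I would introduce a discrete random variable $(I,J)$ over $\curly{0,\dots,r-1}\times\curly{0,\dots,t-1}$ whose conditional law given $x_0$ is $\P{(I,J)=(i,j)\mid x_0}=\gamma_{i,j}(x_0)=\lambda_{i,j}\ell_i(x_0)$; this is a legitimate probability kernel since each $\gamma_{i,j}$ is nonnegative and $\sum_{i,j}\gamma_{i,j}(x_0)=1$, using $\sum_i\ell_i(x_0)=1$ and $\sum_j\lambda_{i,j}=1$. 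Conditioned on the event $\curly{(I,J)=(i,j)}$ the input sequence is the \emph{deterministic} affine feedback $u_k=L_k^{i,j}(x_0-\mu_i^0)+\Bar{u}_k^{i,j}$ (recall $\Bar{\mu}_i=\mu_i^0$), so the whole trajectory, and in particular $x_N$, becomes a deterministic function of $x_0$ alone.

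Next I would determine the branch probabilities and the conditional law of $x_0$ on each branch. Writing $\rho_0(x_0)=\sum_{i=0}^{r-1}p_i\,\P[\cN]{x_0;\mu_i^0,\Sigma_i^0}$, the key computation is
\[
    \E{\ell_i(x_0)}=\int \ell_i(x_0)\,\rho_0(x_0)\,\mathrm{d}x_0=\int p_i\,\P[\cN]{x_0;\mu_i^0,\Sigma_i^0}\,\mathrm{d}x_0=p_i,
\]
so that $\P{(I,J)=(i,j)}=\lambda_{i,j}\,\E{\ell_i(x_0)}=p_i\lambda_{i,j}$, which gives \eqref{eq:prop-gmm-steer-eq1} after summing over $i$. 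Bayes' rule then yields the conditional density of $x_0$ on the branch $(i,j)$ as $\gamma_{i,j}(x_0)\rho_0(x_0)/(p_i\lambda_{i,j})=\P[\cN]{x_0;\mu_i^0,\Sigma_i^0}$, i.e. $x_0\mid\curly{(I,J)=(i,j)}\sim\cN(\mu_i^0,\Sigma_i^0)$.

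Finally I would propagate through the dynamics. Reading off the last block of \eqref{eq:concat-dynamics} gives $x_N=\Phi_{N:0}x_0+\mathbf{B_N}\mathbf{U}$, and substituting $\mathbf{U}=\mathbf{L}_{i,j}(x_0-\mu_i^0)+\mathbf{\Bar{U}}_{i,j}$ on the branch $(i,j)$ produces
\[
    x_N=(\Phi_{N:0}+\mathbf{B_N}\mathbf{L}_{i,j})(x_0-\mu_i^0)+\Phi_{N:0}\mu_i^0+\mathbf{B_N}\mathbf{\Bar{U}}_{i,j}.
\]
Since $x_0-\mu_i^0\mid\curly{(I,J)=(i,j)}\sim\cN(\bm{0},\Sigma_i^0)$, the conditional law of $x_N$ is Gaussian with mean $\Phi_{N:0}\mu_i^0+\mathbf{B_N}\mathbf{\Bar{U}}_{i,j}$ and covariance $(\Phi_{N:0}+\mathbf{B_N}\mathbf{L}_{i,j})\Sigma_i^0(\Phi_{N:0}+\mathbf{B_N}\mathbf{L}_{i,j})\transpose$, matching \eqref{eq:prop-gmm-steer-eq2}--\eqref{eq:prop-gmm-steer-eq3}. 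Summing over all branches by total probability then gives $\rho_N(x')=\sum_{i,j}p_i\lambda_{i,j}\,\P[\cN]{x';\mu^f_{i,j},\Sigma^f_{i,j}}$, and grouping the sub-components that share the index $j$ (which by construction carry the common parameters $\mu^f_j,\Sigma^f_j$) collapses the double sum into $\sum_j q_j\,\P[\cN]{x';\mu^f_j,\Sigma^f_j}$ with $q_j=\sum_i p_i\lambda_{i,j}$, while $\sum_j q_j=\sum_i p_i=1$ certifies a valid GMM. I expect the only step requiring genuine care to be the mixture bookkeeping in the middle paragraph — establishing $\E{\ell_i(x_0)}=p_i$ and the Bayes inversion of the branch kernel — since the propagation step is immediate from \eqref{eq:concat-dynamics}.
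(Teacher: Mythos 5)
Your proposal is correct, and it arrives at the same three identities by the same essential mechanism — the cancellation of the denominator of $\gamma_{i,j}$ against the mixture density of $x_0$ — but it packages the computation differently from the paper. The paper works entirely at the level of densities: it writes $\P[x_N]{\Hat{x}}$ as a double integral via Bayes' theorem, represents the conditional laws of $U$ and $x_N$ with Dirac deltas, collapses the inner integral over $\Hat{U}$, and then evaluates the remaining Gaussian integral against a delta by an explicit change of variables $z_{i,j}=\mathbf{H}_{i,j}\Hat{x}_0-h_{i,j}$, which requires tracking $\det(\mathbf{H}_{i,j}^{-1})$ and implicitly assumes $\mathbf{H}_{i,j}$ is invertible. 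You instead lift the randomization into an explicit latent branch variable $(I,J)$ with kernel $\gamma_{i,j}(x_0)$, compute the branch marginals via $\E{\ell_i(x_0)}=p_i$, invert by Bayes to get $x_0\mid\curly{(I,J)=(i,j)}\sim\cN(\mu_i^0,\Sigma_i^0)$, and then propagate a Gaussian through a deterministic affine map. This buys you two things: the Dirac-delta and Jacobian bookkeeping disappears entirely (the affine image of a Gaussian is Gaussian, full stop), and the argument degrades gracefully to degenerate Gaussians if some $\mathbf{H}_{i,j}$ is singular, whereas the paper's determinant manipulation would need a separate justification there. The one hypothesis both arguments lean on equally is the one built into the proposition statement — that all pairs $(i,j)$ sharing the index $j$ produce the same $(\mu_j^f,\Sigma_j^f)$ — which is what lets the double sum over $(i,j)$ collapse to a single sum over $j$ with weights $q_j=\sum_i p_i\lambda_{i,j}$; you handle this correctly in your final grouping step.
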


\begin{proof}
By virtue of Bayes' Theorem on conditional probability densities, the PDF of $x_N$ can be written as follows:
\begin{align}\label{eq:pdf-x-n}
    \P[x_N]{\Hat{x}} & =  \int_{\R{n}} \int_{\R{m N}}  \P[x_N | x_0 = \Hat{x}_0, U = \Hat{U}]{\Hat{x}} ~ \P[U| x_0=\Hat{x}]{\Hat{U}}  \nonumber \\
    & \qquad \qquad \qquad \times\P[x_0]{\Hat{x}_0 }  \, \mathrm{d}\Hat{U} \, \mathrm{d}\Hat{x}_0 .
\end{align}
Furthermore, the conditional PDFs in \eqref{eq:pdf-x-n} are given by:
\begin{align}
    & \P[x_N | x_0  = \Hat{x}_0, U=\Hat{U}]{\Hat{x}} = \delta (\Hat{x} = \Phi_{N:0} \Hat{x}_0 + \mathbf{B_N} \Hat{U}) \label{eq:conditional-xn-given-x0-u} \\
    & \qquad ~ \, \P[U | x_0 = \Hat{x}_0]{\Hat{U}} = \nonumber \\ 
    &  \quad \sum_{i=0}\spr{r-1} \sum_{j=0}\spr{t-1} \gamma_{i, j} (\Hat{x}_0) \delta ( \Hat{U} = \mathbf{L}_{i, j} (\Hat{x}_0 - \mu_i\spr{0}) + \mathbf{\Bar{U}}\sub{i, j} )  \label{eq:conditional-U-given-x0}
\end{align}
where $\delta(\cdot)$ denotes the Dirac delta function. Eq.~\eqref{eq:conditional-xn-given-x0-u} can be derived readily from the system dynamics \eqref{eq:linear-system-eq} whereas \eqref{eq:conditional-U-given-x0} is a direct consequence of the definition of the policy set $\Pi_r$ given in \eqref{eq:gmm-policy-definition}. 
Let us now analyze the inner integral in \eqref{eq:pdf-x-n}. We observe that $\P[x_0]{\Hat{x}_0}$ does not depend on $\Hat{U}$ and thus can be factored out of the inner integral, which can be written as:
\begin{align}\label{eq:inner-integral}
    l(\Hat{x}_0) := \int_{\R{mN}} \P[x_N | x_0 = \Hat{x}_0, U=\Hat{U}]{\Hat{x}} \P[U|x_0 = \Hat{x}]{\Hat{U}} \mathrm{d} \Hat{U}
\end{align}
Now, plug the expressions in \eqref{eq:conditional-xn-given-x0-u} and \eqref{eq:conditional-U-given-x0} into \eqref{eq:inner-integral} to obtain:
\begin{align}
    l(\Hat{x}_0)& = \int_{\R{mN}} \delta (\Hat{x} = \Phi_{N:0} \Hat{x}_0 + \mathbf{B_N}\Hat{U}) \times \nonumber \\ 
    & \hspace{-0.1cm} \sum_{i=0, j=0}\spr{r-1, t-1} \gamma_{i, j} (\Hat{x}_0) \delta ( \Hat{U} = \mathbf{L}_{i, j} (\Hat{x}_0 - \mu_i\spr{0}) + \mathbf{\Bar{U}}\sub{i, j} ) \, \mathrm{d}\Hat{U} \\
    & = \sum_{i=0, j=0}\spr{r-1, t-1} \gamma_{i, j} (\Hat{x}_0) \delta \big(\Hat{x} = \Phi_{N:0} \Hat{x}_0 \nonumber \\
    &  \qquad \qquad + \mathbf{B_N} (\mathbf{L}_{i,j} (\Hat{x}_0 - \mu_i\spr{0}) + \mathbf{\Bar{U}}_{i, j}) \big). \label{eq:inner-integral-resolved}
\end{align}
Equation~\eqref{eq:inner-integral-resolved} is obtained by using the linearity of the integral operator and the properties of the Dirac delta function. 
The expression in \eqref{eq:inner-integral-resolved} can be rewritten as $\sum_{i, j} \gamma_{i, j}(\Hat{x}_0) \delta (\mathbf{H}_{i, j} \Hat{x}_0 - h_{i, j} = \Hat{x})$ for brevity, where 
\begin{align}
    \mathbf{H}_{i, j} & := \Phi_{N:0} + \mathbf{B_N} \mathbf{L}_{i, j}, \label{eq:H_ij_def} \\
    h_{i, j} &:=  \mathbf{B_N} (\mathbf{L}_{i, j} \mu_{i}\spr{0} - \mathbf{\Bar{U}}_{i, j}). \label{eq:h_ij_def}
\end{align}
Observe that the denominator of $\gamma_{i, j}(\Hat{x}_0)$ defined in \eqref{eq:gamma-def} is equal to $\P[x_0]{\Hat{x}_0}$. By using this fact, it follows that 
\begin{align}\label{eq:prop-gmm-proof-PxN}
    & \P[x_N]{\Hat{x}} = \sum_{i=0}\spr{r-1} \sum_{j=0}\spr{t-1} p_i \lambda_{i, j} g_{i, j} (\Hat{x}),
\end{align}
where 
\begin{align}
    \hspace{-0.20cm} g_{i, j} (\Hat{x}) & := \int_{\R{n}} \P[\mathcal{N}]{\Hat{x}_0; \Bar{\mu}_{0}\spr{i}, \Bar{\Sigma}_{0}\spr{i}} \delta(\mathbf{H}_{i, j} \Hat{x}_0 - h_{i, j} = \Hat{x})  \mathrm{d} \Hat{x}_0 \label{eq:prop-gmm-proof-gij-0}, \\
    & = \int_{\R{n}} \P[\mathcal{N}]{\mathbf{H}_{i, j}^{-1}(z_{i, j} + h_{i, j}) ; \Bar{\mu}_0\spr{i}, \Bar{\Sigma}_0\spr{i}} \nonumber \\
    & \qquad  \times \det (\mathbf{H}_{i, j}\spr{-1})  \delta (z_{i,j} = \Hat{x}) \mathrm{d} z_{i, j}, \label{eq:prop-gmm-proof-gij-1}\\
    & = \P[\mathcal{N}]{\mathbf{H}_{i,j}\spr{-1} (\Hat{x} + h_{i,j});\Bar{\mu}_0\spr{i}, \Bar{\Sigma}_{0}\spr{i}}  \det (\mathbf{H}_{i, j}^{-1} ), \label{eq:prop-gmm-proof-gij-2}\\
    & = \P[\mathcal{N}]{\Hat{x}; \mathbf{H}_{i,j} \mu_{i}\spr{0} - h_{i,j}, \mathbf{H}_{i,j} \Sigma_i\spr{0} \mathbf{H}_{i,j}\transpose} , \label{eq:prop-gmm-proof-gij-3}
\end{align}
Equation \eqref{eq:prop-gmm-proof-gij-1} is obtained by applying the variable transformation $z_{i, j} = \mathbf{H}_{i,j} \Hat{x}_0 - h_{i, j} $ to \eqref{eq:prop-gmm-proof-gij-0}.
Then, the standard property of the Dirac delta function yields \eqref{eq:prop-gmm-proof-gij-2}.
Expanding $\P[\mathcal{N}]{\mathbf{H}_{i,j}\spr{-1} (\Hat{x} + h_{i,j});\Bar{\mu}_0\spr{i}, \Bar{\Sigma}_{0}\spr{i}}$ leads to \eqref{eq:prop-gmm-proof-gij-3}.
Consequently, we expand $\mathbf{H}_{i,j}$ in \eqref{eq:H_ij_def}, $h_{i,j}$ in \eqref{eq:h_ij_def} and define \eqref{eq:prop-gmm-steer-eq2} and \eqref{eq:prop-gmm-steer-eq3} to obtain $g_{i,j}(\Hat{x}) := \P[\mathcal{N}]{\Hat{x}; \mu_j\spr{f}, \Sigma_j\spr{f}}$.
Finally, plugging $g_{i,j}(\Hat{x})$ into \eqref{eq:prop-gmm-proof-PxN} and defining \eqref{eq:prop-gmm-steer-eq1} concludes the proof.
\end{proof}

\begin{remark}
In the statement of Proposition \ref{prop:gmm-steering-proposition}, it is given that for each $j \in \curly{0, \dots, t-1}$ every $\mathbf{\Bar{U}}_{i, j}$ should satisfy \eqref{eq:prop-gmm-steer-eq2} and every $\mathbf{L}_{i, j}$ should satisfy \eqref{eq:prop-gmm-steer-eq3}. 
This seems like an extra condition for the policy to satisfy and limits the applicability of the policy proposed in \eqref{eq:gmm-policy-definition}. 
However, the number of terminal Gaussian components $t$ is actually determined by the parameters $\mathbf{\Bar{U}}_{i, j}$ and $ \mathbf{L}_{i,j}$.
To see this, take an arbitrary set $\curly{\mathbf{\Bar{U}}_\ell, \mathbf{L}_\ell }_{\ell=0}\spr{s}$ for each $i \in \curly{0, \dots, r-1}$ and $\ell \in \curly{0, \dots, s}$, define $\mu_{i\times s + \ell}\spr{f} = \Phi_{N:0} \mu_i\spr{0} + \mathbf{B_N} \mathbf{\Bar{U}}_{\ell}$ and $\Sigma_{i\times s + \ell}\spr{f} = (\Phi_{N:0} + \mathbf{B_N} \mathbf{L}_{\ell}) \Sigma_{i}\spr{0} (\Phi_{N:0} + \mathbf{B_N} \mathbf{L}_{\ell})\transpose$. 
Thus, we can set $t = s \times r$ to obtain \eqref{eq:prop-gmm-steer-eq2} and \eqref{eq:prop-gmm-steer-eq3}.
\end{remark}

\section{Finite-Dimensional Optimization Problem Formulations of GMM Steering Problems}\label{s:formulation-nlp}
Since we have demonstrated that, within the set of policies $\Pi_r$ defined in \eqref{eq:gmm-policy-definition}, the initial GMM state distribution is transformed into another GMM, and these policies in $\Pi_r$ are parameterized by a finite number of decision variables, we can utilize the set of policies $\Pi_r$ to formulate finite-dimensional optimization problems whose solutions will allow us to solve Problems \ref{prob:hard-GMM-density-steering}-\ref{prob:step-cost-constrained-GMM-density-steering}.
A graphical illustration of Problems \ref{prob:soft-GMM-density-steering}
-\ref{prob:step-cost-constrained-GMM-density-steering} under the class of policies defined in \eqref{eq:gmm-policy-definition} is given in Figure \ref{fig:problem-illustration}.

\subsection{Reduction of Unconstrained Problem to a Linear Program}\label{ss:reduction-lp}
Problem \ref{prob:hard-GMM-density-steering} under the admissible randomized control policies defined in \eqref{eq:gmm-policy-definition} corresponds to the following finite-dimensional NLP with decision variables $\mathcal{S}_1 := \curly{\bm{\lambda}, \curly{ \mathbf{\Bar{U}}_{i,j}, \mathbf{L}_{i,j}}_{i=0, j=0}\spr{r-1, t-1} }$:
\begin{subequations}\label{eq:nlp-hard-GMM-density-steering}
\begin{align}
    \min_{ \mathcal{S}_1 } & ~ \mathcal{J}_1 ( \mathcal{S}_1 ) \\
    \text{s.t.} & ~ \bm{\lambda}_{i, :} \in \Delta_t \label{eq:nlp-hard-GMM-density-steering-constr-0}\\
    & ~ \sum_{i=0}\spr{r-1} p_i\spr{0} \lambda_{i,j} = p_j\spr{d},  \label{eq:nlp-hard-GMM-density-steering-constr-2}\\
    & ~ \mu_j\spr{d} = \Phi_{N:0} \mu_{i}\spr{0} + \mathbf{B_N} \mathbf{\Bar{U}}\sub{i, j}, 
    ~ \Sigma_j\spr{d} = \mathbf{H}_{i,j} \Sigma_{i}\spr{0} \mathbf{H}_{i,j}\transpose,  \label{eq:nlp-hard-GMM-density-steering-constr-34}
\end{align}
\end{subequations}
where $\bm{\lambda} \in \R{r \times t}$, $\mathbf{\Bar{U}}_{i, j} \in \R{mN}$, $\mathbf{L}_{i, j} \in \R{mN \times n}$, $\bm{\lambda}_{i, :} = \vertcat{\lambda_{i, 0}, \dots, \lambda_{i, t-1}}$ for all $i \in \curly{0, \dots, r-1}$, and $\lambda_{i, j}$ is the $(i, j)$ entry of $\bm{\lambda}$.
Furthermore, $\mathbf{H}_{i, j}$ is given in \eqref{eq:H_ij_def}, $\mathcal{J}_1( \mathcal{S}_1 ) = \E{J(X_{0:N}, U_{0:N-1})}$.
The constraint in \eqref{eq:nlp-hard-GMM-density-steering-constr-0} is due to the parametrization of the control policy in \eqref{eq:gmm-policy-definition}. The constraints in \eqref{eq:nlp-hard-GMM-density-steering-constr-2} and \eqref{eq:nlp-hard-GMM-density-steering-constr-34} 
are obtained by making the right hand side of \eqref{eq:prop-gmm-steer-eq1}, \eqref{eq:prop-gmm-steer-eq2} and \eqref{eq:prop-gmm-steer-eq3} equal to $p_j\spr{d}$, $\mu\sub{j}\spr{d}$ and $\Sigma_j\spr{d}$, respectively.
Furthermore, constraints \eqref{eq:nlp-hard-GMM-density-steering-constr-0} and \eqref{eq:nlp-hard-GMM-density-steering-constr-2} are enforced for all $i \in \curly{0, \dots, r-1}$ and for all $j \in \curly{0, \dots, t-1}$, respectively.  
Constraints \eqref{eq:nlp-hard-GMM-density-steering-constr-34} 
are enforced for all $i, j$ in $\curly{0, \dots, r-1} \times \curly{0, \dots, t-1}$.
By using the law of iterated expectations, the objective function $\mathcal{J}_1 (\mathcal{S}_1)$ can be written as:
\begin{align}\label{eq:nlp-hard-GMM-density-steering-objective}
    \mathcal{J}_1 (\mathcal{S}_1) := \sum_{i=0}^{r-1} \sum_{j=0}^{t-1} p_i^0 \lambda_{i,j} (J\spr{i}_\mathrm{mean}(\mathbf{\Bar{U}}_{i,j}) + J\spr{i}_\mathrm{cov}(\mathbf{L}_{i,j}))
\end{align}
where $J_\mathrm{mean}\spr{i}(\mathbf{\Bar{U}}_{i, j}) := J_\mathrm{mean}(\mathbf{\Bar{U}}_{i, j}; \mu_0^i)$ and $J\spr{i}_\mathrm{cov}(\mathbf{L}_{i, j}) := J_\mathrm{cov}(\mathbf{L}_{i, j}; \Sigma_0^i)$, where 
$J_\mathrm{mean}(\cdot)$ and $J_\mathrm{cov}(\cdot)$ are defined as in \eqref{eq:Mean-Steering-Problem} and \eqref{eq:Cov-Steering-Problem}, respectively.
The parameters $\curly{\Bar{\mu}_{0}\spr{i}, \Bar{\Sigma}_{0}\spr{i}}_{i=0}\spr{r-1}$ are also decision variables for the randomized policy in \eqref{eq:gmm-policy-definition}. 
However, we take them to be constant and equal to $\curly{\mu_i\spr{0}, \Sigma_i\spr{0}}$ to invoke Proposition \ref{prop:gmm-steering-proposition} and formulate the NLP in \eqref{eq:nlp-hard-GMM-density-steering}.

The non-convexity of the NLP in \eqref{eq:nlp-hard-GMM-density-steering} is due to the non-convexity of the objective function $\mathcal{J}_1 (\mathcal{S}_1)$ and the equality constraint \eqref{eq:nlp-hard-GMM-density-steering-constr-34}. 
We observe that when $\bm{\lambda}$ is fixed, the objective function in \eqref{eq:nlp-hard-GMM-density-steering-objective} becomes separable for each $(i,j)$ pair. 
These separated optimization problems for all $(i,j)$ are linear Gaussian covariance steering problems, and optimal policies for each one of them can be found by invoking Propositions \ref{prop:optimal-mean-steering} and \ref{prop:optimal-covariance-steering}. 
The following theorem summarizes the main result of this section and describes how the optimal policy can be extracted from the solution to the LP in \eqref{eq:hardLP}.

\begin{theorem}\label{theorem:hardtoLP}
The optimal parameters of the policy $\pi \in \Pi_r$ given in \eqref{eq:gmm-policy-definition} that solves Problem \ref{prob:hard-GMM-density-steering} can be obtained by solving the following LP:
\begin{subequations}\label{eq:hardLP}
\begin{align}
    \min_{\boldsymbol{\Tilde{\lambda}} \in \R{r\times t}} & ~~ \tr{ \mathbf{C}\transpose \boldsymbol{\Tilde{\lambda}} }  \label{eq:hardLP-obj}\\
    \text{s.t.} & ~~ \boldsymbol{\Tilde{\lambda}} \mathbf{1}_t = \mathbf{p}_0, ~ \boldsymbol{\Tilde{\lambda}}\transpose \mathbf{1}_r = \mathbf{p}_d, ~ \boldsymbol{\Tilde{\lambda}} \geq 0, \label{eq:hardLP-constr}
\end{align}
\end{subequations}
where $\mathbf{C}_{i,j} := J\spr{\star}_{\mathrm{mean}}(\mu\spr{0}_{i}, \mu_{j}\spr{d}) + J\spr{\star}_{\mathrm{cov}}(\Sigma_i\spr{0}, \Sigma\spr{d}_{j} )$, is the $(i, j)$ entry of $\mathbf{C}$, $J_{\mathrm{mean}}\spr{\star}(\mu_i\spr{0}, \mu_j\spr{d})$ and $J_{\mathrm{cov}}\spr{\star}(\Sigma_i\spr{0}, \Sigma_j\spr{d})$ denote the optimal values of the optimal mean and covariance steering problems with $x_0 \sim \mathcal{N}(\mu_i\spr{0}, \Sigma_i\spr{0})$ and $x_N \sim \mathcal{N} (\mu_j\spr{d}, \Sigma_j\spr{d})$. 
Furthermore, the optimal mixing weights $\lambda_{i,j}\spr{\star}$ of the policy $\pi \in \Pi_r$ are given by:
$\lambda_{i,j}\spr{\star} = \frac{\Tilde{\lambda}_{i,j}}{p_i\spr{0}}$,
where $\Tilde{\lambda}_{i,j}\spr{\star}$ is the optimal solution of the LP in \eqref{eq:hardLP}.
\end{theorem}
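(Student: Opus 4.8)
The plan is to exploit the nested structure of the NLP \eqref{eq:nlp-hard-GMM-density-steering}: minimize first over the covariance-steering variables $\{\mathbf{\Bar{U}}_{i,j},\mathbf{L}_{i,j}\}$ with the mixing matrix $\bm{\lambda}$ held fixed, and then minimize the resulting value over $\bm{\lambda}$. The inner stage will decouple into $r\,t$ independent Gaussian covariance-steering problems whose optimal values are the entries $\mathbf{C}_{i,j}$; the outer stage will, after the substitution $\Tilde{\lambda}_{i,j}=p_i^0\lambda_{i,j}$, be exactly the LP \eqref{eq:hardLP}.

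For the inner stage, fix $\bm{\lambda}$ satisfying \eqref{eq:nlp-hard-GMM-density-steering-constr-0}--\eqref{eq:nlp-hard-GMM-density-steering-constr-2}. By the separable form \eqref{eq:nlp-hard-GMM-density-steering-objective} of $\mathcal{J}_1$ and by the fact that the equality constraints \eqref{eq:nlp-hard-GMM-density-steering-constr-34}, with $\mathbf{H}_{i,j}=\Phi_{N:0}+\mathbf{B_N}\mathbf{L}_{i,j}$, couple $\mathbf{\Bar{U}}_{i,j}$ only with the pair $(\mu_i^0,\mu_j^d)$ and $\mathbf{L}_{i,j}$ only with the pair $(\Sigma_i^0,\Sigma_j^d)$, the minimization over $\{\mathbf{\Bar{U}}_{i,j},\mathbf{L}_{i,j}\}$ separates over $(i,j)$. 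For an index pair with $\lambda_{i,j}>0$ the weight $p_i^0\lambda_{i,j}$ is a positive constant, so the $(i,j)$ subproblem is to minimize $J_{\mathrm{mean}}^i(\mathbf{\Bar{U}}_{i,j})+J_{\mathrm{cov}}^i(\mathbf{L}_{i,j})$ subject to \eqref{eq:nlp-hard-GMM-density-steering-constr-34}; this is precisely the Gaussian mean-steering problem \eqref{eq:Mean-Steering-Problem} together with the Gaussian covariance-steering problem \eqref{eq:Cov-Steering-Problem} for the transfer $\mathcal{N}(\mu_i^0,\Sigma_i^0)\to\mathcal{N}(\mu_j^d,\Sigma_j^d)$, whose optimal value equals $\mathbf{C}_{i,j}=J_{\mathrm{mean}}^\star(\mu_i^0,\mu_j^d)+J_{\mathrm{cov}}^\star(\Sigma_i^0,\Sigma_j^d)$ and is attained at the closed-form minimizers of Propositions \ref{prop:optimal-mean-steering} and \ref{prop:optimal-covariance-steering}. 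For an index pair with $\lambda_{i,j}=0$ the term drops out of $\mathcal{J}_1$ entirely, so any feasible $(\mathbf{\Bar{U}}_{i,j},\mathbf{L}_{i,j})$, which exists under Assumption \ref{assumption:linear-controllable}, may be used; in either case the contribution to the optimal inner value is $p_i^0\lambda_{i,j}\mathbf{C}_{i,j}$. Hence $\min_{\{\mathbf{\Bar{U}}_{i,j},\mathbf{L}_{i,j}\}}\mathcal{J}_1=\sum_{i=0}^{r-1}\sum_{j=0}^{t-1}p_i^0\lambda_{i,j}\mathbf{C}_{i,j}$.

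For the outer stage, set $\Tilde{\lambda}_{i,j}=p_i^0\lambda_{i,j}$ (legitimate since $p_i^0>0$), so that $\boldsymbol{\Tilde{\lambda}}=[\Tilde{\lambda}_{i,j}]$. The simplex constraint \eqref{eq:nlp-hard-GMM-density-steering-constr-0} becomes $\boldsymbol{\Tilde{\lambda}}\mathbf{1}_t=\mathbf{p}_0$ with $\boldsymbol{\Tilde{\lambda}}\geq 0$, the constraint \eqref{eq:nlp-hard-GMM-density-steering-constr-2} becomes $\boldsymbol{\Tilde{\lambda}}\transpose\mathbf{1}_r=\mathbf{p}_d$, and the objective becomes $\sum_{i,j}\Tilde{\lambda}_{i,j}\mathbf{C}_{i,j}=\tr{\mathbf{C}\transpose\boldsymbol{\Tilde{\lambda}}}$, which is the LP \eqref{eq:hardLP}; this LP is feasible, e.g.\ $\Tilde{\lambda}_{i,j}=p_i^0 p_j^d$. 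An optimal $\boldsymbol{\Tilde{\lambda}}^\star$ therefore yields the optimal mixing weights $\lambda_{i,j}^\star=\Tilde{\lambda}_{i,j}^\star/p_i^0$, and, for the pairs with $\lambda_{i,j}^\star>0$, the optimal gains $\mathbf{L}_{i,j}^\star$ and feed-forward terms $\mathbf{\Bar{U}}_{i,j}^\star$ are read off from Propositions \ref{prop:optimal-mean-steering} and \ref{prop:optimal-covariance-steering} for the transfer $\mathcal{N}(\mu_i^0,\Sigma_i^0)\to\mathcal{N}(\mu_j^d,\Sigma_j^d)$, which gives the claimed optimal policy $\pi\in\Pi_r$.

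The step I expect to be the main obstacle is justifying the interchange of minimizations rigorously, i.e.\ that the joint infimum over $(\bm{\lambda},\{\mathbf{\Bar{U}}_{i,j},\mathbf{L}_{i,j}\})$ equals the nested one, and in particular handling the degenerate pairs $\lambda_{i,j}=0$ (where the per-pair problem is still feasible but contributes zero) together with the attainment of each infimum, which relies on strict convexity of the mean-steering cost and on the explicit von Neumann / nuclear-norm minimizer in the covariance-steering cost. A secondary point that must be stated carefully is that the reduction to \eqref{eq:nlp-hard-GMM-density-steering} is lossless within $\Pi_r$: by Proposition \ref{prop:gmm-steering-proposition} every feasible tuple of the NLP corresponds to a policy in $\Pi_r$ that steers $x_0$ to the desired terminal GMM with matching expected cost, and conversely.
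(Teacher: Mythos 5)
Your proposal is correct and follows essentially the same route as the paper: the paper omits a formal proof of Theorem~\ref{theorem:hardtoLP}, stating that the reduction is the one described in Section~\ref{ss:reduction-lp}, namely fixing $\bm{\lambda}$ to decouple the NLP \eqref{eq:nlp-hard-GMM-density-steering} into independent mean/covariance steering subproblems solved via Propositions~\ref{prop:optimal-mean-steering} and~\ref{prop:optimal-covariance-steering}, and then substituting $\Tilde{\lambda}_{i,j}=p_i^0\lambda_{i,j}$ to obtain the LP \eqref{eq:hardLP}. Your additional care with the interchange of minimizations and the degenerate pairs $\lambda_{i,j}=0$ goes slightly beyond what the paper records, but does not change the argument.
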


The formal proof of Theorem \ref{theorem:hardtoLP} is omitted since the reduction of Problem \ref{prob:hard-GMM-density-steering} to the LP defined in \eqref{eq:hardLP} has already been described in detail in Section \ref{ss:reduction-lp}.
\vspace{-0.3cm}

\subsection{Reduction of the Soft Constrained Problem to an NLP}\label{ss:reduction-soft-nlp}
Solving Problem \ref{prob:soft-GMM-density-steering} over randomized policies defined in \eqref{eq:gmm-policy-definition} can be done similarly to the NLP formulation presented in \eqref{eq:nlp-hard-GMM-density-steering}. 
The set of decision variables for this minimization problem is denoted by $\mathcal{S}_{2} := \big( \curly{p_i^N, \mu_i^N, \Sigma_i^N}_{i=0}^{q-1},$ $\boldsymbol{\beta},$ $\curly{\lambda_{i, j}, \mathbf{\Bar{U}}_{i, j},$ $ \mathbf{L}_{i, j}  }_{i=0, j=0}^{r-1, q-1} \big)$.
The resulting NLP is presented as follows:
\begin{subequations}\label{eq:nlp-soft-GMM-density-steering}
\begin{align}
    \min_{\mathcal{S}_2 } & ~ \mathcal{J}_2(\mathcal{S}_2) ~~\text{s.t.} ~~ \eqref{eq:nlp-hard-GMM-density-steering-constr-0} \\
    & ~ \sum_{i=0}^{r-1} p_i^0 \lambda_{i, j} = p_j^N , \label{eq:nlp-soft-GMM-density-steering-constr-0}\\
    & ~ \mu_j^N = \Phi_{N:0} \mu_i^0 + \mathbf{B_N} \mathbf{\Bar{U}}_{i, j}, \label{eq:nlp-soft-GMM-density-steering-constr-1} \\
    & ~ \Sigma_{j}^N = \mathbf{H}_{i, j} \Sigma_i^0 \mathbf{H}_{i, j}\transpose,  \label{eq:nlp-soft-GMM-density-steering-constr-2} \\
    & ~ \boldsymbol{\beta}\mathbf{1}_t = \mathbf{p}_N, ~ \boldsymbol{\beta}\transpose\mathbf{1}_q = \mathbf{p}_{d} , ~ \boldsymbol{\beta} \geq 0,\label{eq:nlp-soft-GMM-density-steering-constr-3}
\end{align}
\end{subequations}
where $\mathbf{H}_{i, j}$ is given in \eqref{eq:H_ij_def}, $\mathbf{p}_N := [p_0^N, \dots, p_{q-1}^N]$, and $\mathbf{p}_\mathrm{d} := [p_0^\mathrm{d}, \dots, p_{t-1}^{\mathrm{d}}]$.
The constraints in \eqref{eq:nlp-soft-GMM-density-steering-constr-0}, \eqref{eq:nlp-soft-GMM-density-steering-constr-1} and \eqref{eq:nlp-soft-GMM-density-steering-constr-2} are obtained from the relationship between the terminal state GMM distribution and the policy parameters. 
The constraint in \eqref{eq:nlp-soft-GMM-density-steering-constr-3} is due to the expression of the Wasserstein-GMM distance given in Definition \ref{def:gmm-wasserstein}.
In this formulation, the number of mixture components of the GMM corresponding to the terminal state distribution is fixed and equal to $q \in \Zplus$.
Thus, the constraint \eqref{eq:nlp-hard-GMM-density-steering-constr-0} is imposed for all $i \in \curly{0, \dots, r-1}$. 
Furthermore, constraints \eqref{eq:nlp-soft-GMM-density-steering-constr-1}, \eqref{eq:nlp-soft-GMM-density-steering-constr-2} are imposed for all $(i, j) \in \curly{0, \dots, r-1} \times \curly{0, \dots, q-1}$ pairs. 
Additionally, the objective function $\mathcal{J}_2(\mathcal{S}_2)$ is written as follows:
\begin{align}\label{eq:nlp-soft-GMM-density-steering-objective}
    \mathcal{J}_2(\mathcal{S}_2) := & ~  \sum_{i=0}^{r-1} \sum_{j=0}^{q-1}  p_i^0 \lambda_{i, j} (J^i_\mathrm{mean}(\mathbf{\Bar{U}}_{i, j}) + J^i_\mathrm{cov}(\mathbf{L}_{i, j})) \nonumber \\
    & ~ + \kappa \sum_{i=0}^{q-1} \sum_{j=0}^{r-1} \beta_{i, j}  W_\mathcal{N}^2(\mu_i^0, \Sigma_i^0, \mu_j^N, \Sigma^N_j) 
\end{align}
where $W_\mathcal{N}^2(\mu_i^0, \Sigma_i^0, \mu_j^N, \Sigma^N_j)$ denotes the squared Wasserstein distance between the PDFs $\rho_i^0(x) := \P[\mathcal{N}]{x; \mu_i^0, \Sigma_i^0}$, $\rho_j^N(x) := \P[\mathcal{N}]{x; \mu_j^N, \Sigma_j^N}$ whose closed-form expression is given in \eqref{eq:wasserstein-gaussian}. 
We were able to use the latter expression in \eqref{eq:wasserstein-gaussian} since the state distribution remains GMM under the policy given in \eqref{eq:gmm-policy-definition}.
Note that the $\beta_{i, j}$ variables appear in the objective function since they are used to evaluate the squared GMM-Wasserstein distance given in Definition \ref{def:gmm-wasserstein}.

Similar to the problem given in \eqref{eq:nlp-hard-GMM-density-steering}, the problem in \eqref{eq:nlp-soft-GMM-density-steering} is non-convex due to the non-convexity of the objective function in \eqref{eq:nlp-soft-GMM-density-steering-objective} and the terminal covariance assignment constraints in \eqref{eq:nlp-soft-GMM-density-steering-constr-2}. 
We observe, however, that by fixing the variables $\lambda_{i, j}$, $\beta_{i, j}$, $\mu_j^N$, and $\Sigma_j^N$, the second term in the objective function becomes constant. 
Consequently, the problem in \eqref{eq:nlp-soft-GMM-density-steering} can be decoupled into a mean steering problem and a covariance steering problem for each pair $(i, j) \in \curly{0, \dots, r-1} \times \curly{0, \dots, q-1}$. 
By solving each of these individual covariance steering problems, we can eliminate constraints \eqref{eq:nlp-soft-GMM-density-steering-constr-2} and \eqref{eq:nlp-soft-GMM-density-steering-constr-3}, along with the decision variables $\curly{\mathbf{\bar{U}}_{i, j},\mathbf{L}_{i, j}}_{i=0, j=0}^{r-1, q-1}$. We denote the new decision variable as $\mathcal{S}_2^{\prime}$. Thus, the objective function in \eqref{eq:nlp-soft-GMM-density-steering-objective} can be rewritten in terms of $\mathcal{S}_2^\prime$ as follows:
\begin{align}\label{eq:nlp-soft-GMM-density-steering-objective2}
    \mathcal{J}_3(\mathcal{S}_2^{\prime}) := & \sum_{i=0}^{r-1}\sum_{j=0}^{q-1} p^0_i \lambda_{i, j} \big( J^\star_\mathrm{mean}(\mu_i^0, \mu_j^N) + J^\star_\mathrm{cov}(\Sigma_i^0, \Sigma_j^N) \big) \nonumber \\
    & + \kappa \sum_{i=0}^{q-1}\sum_{j=0}^{t-1} \beta_{i, j} W_\mathcal{N}^2 (\mu_i^N, \Sigma_i^N, \mu_j^d, \Sigma_j^d).
\end{align}
Note that $J^\star_\mathrm{mean}(\mu_x, \mu_y)$ is a convex quadratic function and $J^\star_\mathrm{cov}(\Sigma_x, \Sigma_y)$ is a convex function.
Furthermore, $W_\mathcal{N}^2(\mu_x, \Sigma_x, \mu_y, \Sigma_y)$ is also a convex function. 
Then, to isolate the non-convex terms in the objective function, we define additional decision variables $\curly{C_{i, j}}_{i=0, j=0}^{r-1, q-1}$ and $\curly{T_{i, j}}_{i=0, j=0}^{q-1, t-1}$ and enforce the constraints $C_{i, j} = J^\star_\mathrm{mean}(\mu^0_i, \mu^N_j) + J^\star_\mathrm{cov}(\Sigma^0_i, \Sigma^N_j)$ and $T_{i, j} = W_\mathcal{N}^2(\mu_i^N, \Sigma_i^N, \mu_j^d, \Sigma_j^d)$.
Finally, relaxing these equality constraints into inequality constraints yields an optimization problem with convex constraints and a bilinear objective function. 
The following theorem presents the final form of the optimization problem. 
\begin{theorem}\label{theorem:soft-to-bilinear}
    The optimal parameters of the policy $\pi \in \Pi_r$ given in \eqref{eq:gmm-policy-definition} that solves Problem \ref{prob:soft-GMM-density-steering} can be obtained by solving the following bilinear program over $\mathcal{S}_{\mathrm{soft}} = (\boldsymbol{\Tilde{\lambda}}, \mathbf{C}, \boldsymbol{\beta}, \mathbf{T}, \mathbf{p}_{N}, \curly{L_{i, j}}_{i=0, j=0}^{r-1, q-1}, \curly{Y_{i, j}}_{i=0, j=0}^{q-1, t-1}, \curly{\mu_i^{N}, \Sigma_i^N}_{i=0}^{q-1})$:
    \begin{subequations}\label{eq:softBilinear}
    \begin{align}
        \min_{\mathcal{S}_\mathrm{soft}} & ~ \mathrm{tr} \big( \mathbf{C}\transpose \boldsymbol{\Tilde{\lambda}} \big) + \kappa \tr{\mathbf{T}\transpose \boldsymbol{\beta}} \label{eq:softBilinear-objective}\\
        \mathrm{s.t.} & ~  \boldsymbol{\Tilde{\lambda}} \mathbf{1}_q = \mathbf{p}_0, ~ \boldsymbol{\Tilde{\lambda}}\transpose \mathbf{1}_r = \mathbf{p}_N, ~ \boldsymbol{\Tilde{\lambda}} \geq 0,  \label{eq:softBilinear-constr0}\\
        & ~ \boldsymbol{\beta}\mathbf{1}_t = \mathbf{p}_N, ~ \boldsymbol{\beta}\transpose\mathbf{1}_q = \mathbf{p}_{d} , ~ \boldsymbol{\beta} \geq 0, \label{eq:softBilinear-constr1} \\
        & ~ \mathbf{C}_{i, j} \geq J_\mathrm{mean}^\star (\mu_i^0, \mu_j^N) + \tr{\Theta_6 \Sigma_i^0} \nonumber \\
        & \qquad \quad ~ + \tr{\Theta_7 \Sigma_j^N} + \tr{L_{i, j} + L\transpose_{i, j}}, \label{eq:softBilinear-constr2}\\
        & ~ \begin{bmatrix}
            \Theta_8 \Sigma_j^N \Theta_8\transpose & L_{i, j} \\
            L_{i, j}\transpose & \Sigma_i^0
        \end{bmatrix} \succeq 0,  \label{eq:softBilinear-constr3} \\ 
        & ~ \mathbf{T}_{i, j} \geq \lVert \mu_j^d - \mu_i^N \rVert_2^2 + \tr{\Sigma_i^N} + \tr{\Sigma_j^d} \nonumber \\
        & ~ \qquad \quad ~ + \tr{Y_{i, j} + Y_{i, j}\transpose}, \label{eq:softBilinear-constr4} \\
        & ~ \begin{bmatrix}
            \Sigma_i^N & Y_{i, j} \\
            Y_{i, j}\transpose & \Sigma_{j}^d
        \end{bmatrix} \succeq 0, \label{eq:softBilinear-constr5}
    \end{align}
    \end{subequations}
    where $J_\mathrm{mean}\spr{\star}(\mu_x, \mu_y)$ is a convex quadratic function defined in Proposition \ref{prop:optimal-mean-steering}.
    In addition, $\boldsymbol{\Tilde{\lambda}}, \mathbf{C} \in \R{r \times q}$, $\boldsymbol{\beta}, \mathbf{T} \in \R{q \times t}$, $L_{i,j} , Y_{i, j} \in \R{n \times n}$. 
    $\mathbf{p}_0 := [p_0^{0}, \dots, p_{r-1}^{0}] \in \Delta_{r}$, $\mathbf{p}_{N} := [ p_0^{N}, \dots, p_{q-1}^N] \in \Delta_{q}$ and $\mathbf{p}_d := [p_0^d, \dots, p_{t-1}^d] \in \Delta_{t}$. 
    $\mathbf{C}_{i,j}$ and $\mathbf{T}_{i, j}$ denote the $(i, j)$ entry of matrices $\mathbf{C}$ and $\mathbf{T}$, respectively.
    The constraints specified in \eqref{eq:softBilinear-constr2} and \eqref{eq:softBilinear-constr3} are imposed for all pairs $(i, j) \in \curly{0, \dots, r-1} \times \curly{0, \dots, q-1}$, while \eqref{eq:softBilinear-constr4} and \eqref{eq:softBilinear-constr5} are enforced for all pairs $(i, j) \in \curly{0, \dots, q-1} \times \curly{0, \dots, t-1}$.
    Furthermore, the optimal mixing weights $\lambda_{i,j}\spr{\star}$ of the policy $\pi \in \Pi_r$ are given by $\lambda_{i,j}\spr{\star} = \frac{\Tilde{\lambda}_{i,j}}{p_i\spr{0}}$, where $\Tilde{\lambda}_{i,j}\spr{\star}$ is the optimal solution of the optimization problem in \eqref{eq:softBilinear}.
\end{theorem}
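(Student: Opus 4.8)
The plan is to follow the reduction outlined in Section~\ref{ss:reduction-soft-nlp}, supplying the two places where non-convex cost terms must be turned into linear matrix inequality (LMI) constraints, in the same spirit as the argument behind Theorem~\ref{theorem:hardtoLP} but now with the extra Wasserstein--GMM penalty. First I would invoke Proposition~\ref{prop:gmm-steering-proposition} to certify that, restricted to the policy class $\Pi_r$ in \eqref{eq:gmm-policy-definition}, Problem~\ref{prob:soft-GMM-density-steering} is precisely the finite-dimensional NLP \eqref{eq:nlp-soft-GMM-density-steering} with objective \eqref{eq:nlp-soft-GMM-density-steering-objective}. With $\bm\lambda$, $\boldsymbol\beta$, $\mathbf p_N$ and $\{\mu_j^N,\Sigma_j^N\}$ held fixed, the first sum in \eqref{eq:nlp-soft-GMM-density-steering-objective} separates over the pairs $(i,j)$ into a mean steering problem \eqref{eq:Mean-Steering-Problem} and a covariance steering problem \eqref{eq:Cov-Steering-Problem} with boundary data $(\mu_i^0,\Sigma_i^0)\to(\mu_j^N,\Sigma_j^N)$; solving each by Propositions~\ref{prop:optimal-mean-steering} and \ref{prop:optimal-covariance-steering} eliminates the variables $\{\mathbf{\Bar{U}}_{i,j},\mathbf L_{i,j}\}$ together with constraints \eqref{eq:nlp-soft-GMM-density-steering-constr-1}--\eqref{eq:nlp-soft-GMM-density-steering-constr-2} and leaves the objective $\mathcal J_3$ in \eqref{eq:nlp-soft-GMM-density-steering-objective2}.

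Next I would introduce epigraph variables $\mathbf C\in\R{r\times q}$ and $\mathbf T\in\R{q\times t}$ and express the two remaining non-convex cost blocks as SDPs. For the covariance steering cost, Corollary~\ref{corrolary:alternative-cov-steer-expr} applied with initial covariance $\Sigma_i^0$ and desired covariance $\Sigma_j^N$ gives $J_\mathrm{cov}^\star(\Sigma_i^0,\Sigma_j^N)=\min_{L_{i,j}}\{\tr{\Theta_6\Sigma_j^N}+\tr{\Theta_7\Sigma_i^0}+\tr{L_{i,j}+L_{i,j}\transpose}\}$ subject to the LMI \eqref{eq:softBilinear-constr3}, which together with the convex quadratic $J_\mathrm{mean}^\star$ of Proposition~\ref{prop:optimal-mean-steering} yields \eqref{eq:softBilinear-constr2}--\eqref{eq:softBilinear-constr3}. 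For the Gaussian Wasserstein block I would combine \eqref{eq:wasserstein-gaussian} with the identity $\tr{((\Sigma_j^d)^{1/2}\Sigma_i^N(\Sigma_j^d)^{1/2})^{1/2}}=\lVert(\Sigma_j^d)^{1/2}(\Sigma_i^N)^{1/2}\rVert_*$, so that the only non-convex term of $W_\mathcal{N}^2(\mu_i^N,\Sigma_i^N,\mu_j^d,\Sigma_j^d)$ equals $-2\lVert\Omega'\rVert_*$ with $\Omega'=(\Sigma_j^d)^{1/2}(\Sigma_i^N)^{1/2}$; Lemma~\ref{lemma:sdp-nuc-norm} applied to $\Omega'$ followed by Lemma~\ref{lemma:spd-equivalent-form} with $A=(\Sigma_j^d)^{1/2}$ and $M=\Sigma_i^N$ rewrites $-2\lVert\Omega'\rVert_*$ as $\min_{Y_{i,j}}\tr{Y_{i,j}+Y_{i,j}\transpose}$ subject to \eqref{eq:softBilinear-constr5}, producing \eqref{eq:softBilinear-constr4}--\eqref{eq:softBilinear-constr5}. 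The reason these epigraph relaxations are exact is that the weights $p_i^0\lambda_{i,j}$ and $\beta_{i,j}$ are nonnegative, so the inner minimizations over $L_{i,j}$ and $Y_{i,j}$ commute with the weighted sums and at any optimizer each $\mathbf C_{i,j}$ (resp.\ $\mathbf T_{i,j}$) with positive weight attains its lower bound, while the entries with zero weight can be reset to that bound without changing the objective.

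Finally I would perform the change of variables $\Tilde\lambda_{i,j}=p_i^0\lambda_{i,j}$: the normalization $\sum_j\lambda_{i,j}=1$ for each $i$ becomes $\boldsymbol{\Tilde{\lambda}}\mathbf 1_q=\mathbf p_0$, the terminal-weight conditions \eqref{eq:nlp-soft-GMM-density-steering-constr-0} become $\boldsymbol{\Tilde{\lambda}}\transpose\mathbf 1_r=\mathbf p_N$, nonnegativity is preserved, the first sum of $\mathcal J_3$ becomes $\tr{\mathbf C\transpose\boldsymbol{\Tilde{\lambda}}}$, the second becomes $\kappa\tr{\mathbf T\transpose\boldsymbol\beta}$, and \eqref{eq:nlp-soft-GMM-density-steering-constr-3} is exactly \eqref{eq:softBilinear-constr1}; this reproduces the bilinear program \eqref{eq:softBilinear}, and inverting the substitution gives $\lambda_{i,j}^\star=\Tilde\lambda_{i,j}^\star/p_i^0$. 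The step I expect to be the main obstacle is the handling of the concave Bures term of $W_\mathcal{N}^2$ --- proving the nuclear-norm identity above and verifying that Lemmas~\ref{lemma:sdp-nuc-norm} and \ref{lemma:spd-equivalent-form} are applicable (which needs $\Omega'$ nonsingular, i.e.\ $\Sigma_i^N,\Sigma_j^d\succ\bm{0}$) --- together with making the tightness of the epigraph relaxation rigorous despite the bilinear coupling between $(\mathbf C,\mathbf T)$ and $(\boldsymbol{\Tilde{\lambda}},\boldsymbol\beta)$.
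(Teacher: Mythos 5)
Your proposal is correct and follows essentially the same route as the paper: reduce to the NLP \eqref{eq:nlp-soft-GMM-density-steering}, eliminate $(\mathbf{\Bar{U}}_{i,j},\mathbf{L}_{i,j})$ via Propositions \ref{prop:optimal-mean-steering}--\ref{prop:optimal-covariance-steering} and Corollary \ref{corrolary:alternative-cov-steer-expr}, convert the two non-convex cost blocks into epigraph/LMI form using Lemmas \ref{lemma:sdp-nuc-norm}--\ref{lemma:spd-equivalent-form}, argue tightness from nonnegativity of the weights, and substitute $\Tilde{\lambda}_{i,j}=p_i^0\lambda_{i,j}$. In fact you make explicit a step the paper leaves implicit --- the identity $\tr{((\Sigma_j^d)^{1/2}\Sigma_i^N(\Sigma_j^d)^{1/2})^{1/2}}=\lVert(\Sigma_j^d)^{1/2}(\Sigma_i^N)^{1/2}\rVert_*$ that makes Lemmas \ref{lemma:sdp-nuc-norm} and \ref{lemma:spd-equivalent-form} applicable to the Bures term --- and you correctly handle the zero-weight entries in the tightness argument.
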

\begin{proof}
    First, we show that the inequality constraints in \eqref{eq:softBilinear-constr2} and \eqref{eq:softBilinear-constr4} are tight at optimality. 
    To see that, let all decision variables except $\mathbf{C}$ and $\mathbf{T}$ be fixed. 
    Since $\lambda_{i, j} \geq 0$, the objective function in \eqref{eq:softBilinear-objective} is minimized if $\mathbf{C}_{i, j}$ is equal to the right hand side of \eqref{eq:softBilinear-constr2}. 
    Thus, for any fixed $\boldsymbol{\Tilde{\lambda}} \geq 0$, the objective function is minimized, if the right hand side of \eqref{eq:softBilinear-constr2} is minimized with respect to $L_{i, j}$ subject to \eqref{eq:softBilinear-constr3}. 
    From Corollary \ref{corrolary:alternative-cov-steer-expr}, the optimal value of this minimization problem is an alternative representation of $J^{\star}_\mathrm{mean}(\mu_i^0, \mu_j^N) + J_\mathrm{cov}^{\star}(\Sigma_i^0, \Sigma_j^N)$.
    Thus, we have shown that $\mathbf{C}_{i, j} = J^{\star}_\mathrm{mean}(\mu_i^0, \mu_j^N) + J_\mathrm{cov}^{\star}(\Sigma_i^0, \Sigma_j^N)$ at optimality. The same arguments can be applied to \eqref{eq:softBilinear-constr4} and \eqref{eq:softBilinear-constr5} along with Lemmas \ref{lemma:sdp-nuc-norm} and \ref{lemma:spd-equivalent-form} to show that $\mathbf{T}_{i, j}$ is equal to the right hand side of \eqref{eq:wasserstein-gaussian}. 
    Therefore, the objective function is equal to $\mathcal{J}_{3}(\mathcal{S}_2^{\prime})$ given in \eqref{eq:nlp-soft-GMM-density-steering-objective2} and the proof is complete.
\end{proof}

\begin{remark}
    The number of mixture components for the terminal state GMM distribution is not considered a decision variable in the optimization problem presented in \eqref{eq:softBilinear}. 
    Instead, it is taken to be a parameter that is determined prior to solving the problem and remains fixed throughout the optimization process. 
    Typically, choosing $q = \mathrm{max} (r, t)$ yields satisfactory results in numerical simulations.
\end{remark}

\subsection{Reduction of Total Cost Constrained Problem to an NLP}\label{ss:reduction-effort-nlp}
Solving Problem \ref{prob:total-cost-constrained-GMM-density-steering} over the set of policies given in \eqref{eq:gmm-policy-definition} is similar to solving Problem \ref{prob:soft-GMM-density-steering}. 
Both problems have the same set of decision variables and share most of the constraints. 
An NLP formulation for Problem \ref{prob:total-cost-constrained-GMM-density-steering} is given as follows:
\begin{subequations}\label{eq:nlp-total-cost-GMM-density-steering}
\begin{align}
    \min_{\mathcal{S}_2} & ~ \mathcal{J}_{4}(\mathcal{S}_2) ~~  \text{s.t.} ~ \eqref{eq:nlp-hard-GMM-density-steering-constr-0}, \eqref{eq:nlp-soft-GMM-density-steering-constr-0} \text{-} \eqref{eq:nlp-soft-GMM-density-steering-constr-3}, \\
    & ~ \sum_{i=0}^{r-1} \sum_{j=0}^{q-1} p_i^0 \lambda_{i, j} \big ( J_\mathrm{mean}^i(\mathbf{\Bar{U}}_{i, j}) + J^i_\mathrm{cov}(\mathbf{L}_{i, j}) \big) \leq \kappa \label{eq:nlp-total-cost-GMM-density-steering-constr}
\end{align}
\end{subequations}
where $\mathcal{J}_4(\mathcal{S}_2) := \sum_{i=0}^{q-1} \sum_{j=0}^{t-1} \beta_{i, j} W_2^2 (\mu_i^N, \Sigma_i^N, \mu_j^d, \Sigma_j^d)$.
Similar to the NLPs in \eqref{eq:nlp-hard-GMM-density-steering} and \eqref{eq:nlp-soft-GMM-density-steering}, the NLP in \eqref{eq:nlp-total-cost-GMM-density-steering} is not convex. 
To eliminate variables $\mathbf{\Bar{U}}_{i, j}$ and $\mathbf{L}_{i, j}$ in \eqref{eq:nlp-total-cost-GMM-density-steering}, we use Propositions \ref{prop:optimal-mean-steering}, \ref{prop:optimal-covariance-steering} and Corollary \ref{corrolary:alternative-cov-steer-expr} as described in Section \ref{ss:reduction-effort-nlp}.
The final form of the optimization problem is given in the following theorem. 
\begin{theorem}\label{theorem:totalBilinear}
    The optimal parameters of the optimal policy $\pi \in \Pi_r$ that solves Problem \ref{prob:total-cost-constrained-GMM-density-steering} can be obtained by solving the following bilinear program:
    \begin{subequations}\label{eq:totalBilinear}
    \begin{align}
        \min_{\mathcal{S}_\mathrm{total}} & ~ \tr{\mathbf{T}\transpose \boldsymbol{\beta} } ~~
        \mathrm{s.t.} ~~~\eqref{eq:softBilinear-constr0} \text{-} \eqref{eq:softBilinear-constr5} \\
        & ~ \mathrm{tr} \big( \mathbf{C}\transpose \boldsymbol{\Tilde{\lambda}} \big) \leq \kappa, \label{eq:totalBilinear-constr}
    \end{align}
    \end{subequations}
    where $\boldsymbol{\Tilde{\lambda}}, \mathbf{C} \in \R{r \times q}$, $\boldsymbol{\beta}, \mathbf{T} \in \R{q \times t}$ and the set of decision variables $\mathcal{S}_\mathrm{total}$ is equal to $\mathcal{S}_{\mathrm{soft}}$ from Theorem \ref{theorem:soft-to-bilinear}.
\end{theorem}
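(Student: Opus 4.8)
The plan is to follow the blueprint of Theorem~\ref{theorem:soft-to-bilinear}, since Problem~\ref{prob:total-cost-constrained-GMM-density-steering} differs from Problem~\ref{prob:soft-GMM-density-steering} only in that the quadratic cost is moved from the objective into a constraint. First I would start from the NLP~\eqref{eq:nlp-total-cost-GMM-density-steering}, which is exactly Problem~\ref{prob:total-cost-constrained-GMM-density-steering} restricted to the policy class $\Pi_r$: Proposition~\ref{prop:gmm-steering-proposition} turns the terminal-GMM requirement into \eqref{eq:nlp-soft-GMM-density-steering-constr-0}--\eqref{eq:nlp-soft-GMM-density-steering-constr-2}, Definition~\ref{def:gmm-wasserstein} introduces the transport plan $\boldsymbol{\beta}$ in \eqref{eq:nlp-soft-GMM-density-steering-constr-3}, constraint \eqref{eq:nlp-hard-GMM-density-steering-constr-0} is the policy parametrization, and \eqref{eq:nlp-total-cost-GMM-density-steering-constr} is the expected-cost bound expanded by the law of iterated expectations exactly as in \eqref{eq:nlp-hard-GMM-density-steering-objective}.

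Next I would eliminate the gain variables $\{\mathbf{\Bar{U}}_{i,j},\mathbf{L}_{i,j}\}$. For fixed $\boldsymbol{\lambda}$, $\boldsymbol{\beta}$ and $\{\mu_j^N,\Sigma_j^N\}$, the objective $\mathcal{J}_4$ does not depend on these variables, while \eqref{eq:nlp-soft-GMM-density-steering-constr-1}--\eqref{eq:nlp-soft-GMM-density-steering-constr-2} and the cost budget \eqref{eq:nlp-total-cost-GMM-density-steering-constr} decouple over $(i,j)$ into the Gaussian covariance steering problems \eqref{eq:Mean-Steering-Problem} and \eqref{eq:Cov-Steering-Problem}; since $p_i^0\lambda_{i,j}\ge 0$, the budget is least binding when each $J_\mathrm{mean}^i(\mathbf{\Bar{U}}_{i,j})+J_\mathrm{cov}^i(\mathbf{L}_{i,j})$ is minimized subject to its own mean/covariance-assignment constraint, and by Propositions~\ref{prop:optimal-mean-steering} and \ref{prop:optimal-covariance-steering} this minimum equals $J^\star_\mathrm{mean}(\mu_i^0,\mu_j^N)+J^\star_\mathrm{cov}(\Sigma_i^0,\Sigma_j^N)$. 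I would then introduce epigraph variables $\mathbf{C}_{i,j}\ge J^\star_\mathrm{mean}(\mu_i^0,\mu_j^N)+J^\star_\mathrm{cov}(\Sigma_i^0,\Sigma_j^N)$ and $\mathbf{T}_{i,j}\ge W_\mathcal{N}^2(\mu_i^N,\Sigma_i^N,\mu_j^d,\Sigma_j^d)$ and rewrite their right-hand sides via semidefinite programs: Corollary~\ref{corrolary:alternative-cov-steer-expr} converts the $\mathbf{C}$-inequality into \eqref{eq:softBilinear-constr2}--\eqref{eq:softBilinear-constr3} with auxiliary matrices $L_{i,j}$, and Lemmas~\ref{lemma:sdp-nuc-norm}, \ref{lemma:spd-equivalent-form} together with \eqref{eq:wasserstein-gaussian} convert the $\mathbf{T}$-inequality into \eqref{eq:softBilinear-constr4}--\eqref{eq:softBilinear-constr5} with auxiliary matrices $Y_{i,j}$. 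Relabelling $\boldsymbol{\lambda}$ by $\boldsymbol{\Tilde{\lambda}}$ through $\Tilde{\lambda}_{i,j}=p_i^0\lambda_{i,j}$ (so that \eqref{eq:nlp-hard-GMM-density-steering-constr-0}, \eqref{eq:nlp-soft-GMM-density-steering-constr-0} become \eqref{eq:softBilinear-constr0}) produces exactly the program \eqref{eq:totalBilinear}.

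The remaining point is that the relaxed inequalities are tight at an optimum, so that \eqref{eq:totalBilinear-constr} and the objective of \eqref{eq:totalBilinear} genuinely represent the expected cost and the squared GMM-Wasserstein distance. For $\mathbf{T}$ this is the same argument as in Theorem~\ref{theorem:soft-to-bilinear}: $\mathbf{T}$ enters the objective $\tr{\mathbf{T}\transpose\boldsymbol{\beta}}$ with $\boldsymbol{\beta}\ge 0$, so at optimality each $\mathbf{T}_{i,j}$ is pushed down to the SDP value, which by Lemmas~\ref{lemma:sdp-nuc-norm} and \ref{lemma:spd-equivalent-form} equals $W_\mathcal{N}^2(\mu_i^N,\Sigma_i^N,\mu_j^d,\Sigma_j^d)$. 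The step I expect to need the most care is the tightness of $\mathbf{C}$, because here $\mathbf{C}$ appears only in the constraint \eqref{eq:totalBilinear-constr} and not in the objective: given any optimal solution I would lower each $\mathbf{C}_{i,j}$ (choosing $L_{i,j}$ equal to the minimizer of the semidefinite constraint \eqref{eq:softBilinear-constr3}, which is admissible since $L_{i,j}$ is a free variable) down to $J^\star_\mathrm{mean}(\mu_i^0,\mu_j^N)+J^\star_\mathrm{cov}(\Sigma_i^0,\Sigma_j^N)$; since $\boldsymbol{\Tilde{\lambda}}\ge 0$ this only slackens \eqref{eq:totalBilinear-constr} and leaves the objective unchanged, so there is an optimal solution with $\mathbf{C}$ tight, and for it \eqref{eq:totalBilinear-constr} coincides with the expected-cost bound \eqref{eq:nlp-total-cost-GMM-density-steering-constr}. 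Finally, inverting the substitution $\Tilde{\lambda}_{i,j}=p_i^0\lambda_{i,j}$ recovers the stated formula $\lambda_{i,j}^\star=\Tilde{\lambda}_{i,j}^\star/p_i^0$ and the equivalence of \eqref{eq:softBilinear-constr0}--\eqref{eq:softBilinear-constr1} with \eqref{eq:nlp-hard-GMM-density-steering-constr-0}, \eqref{eq:nlp-soft-GMM-density-steering-constr-0}, \eqref{eq:nlp-soft-GMM-density-steering-constr-3}, which completes the proof.
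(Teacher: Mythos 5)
Your proposal is correct and follows essentially the route the paper intends: the paper omits this proof, stating only that it is ``similar to the proof of Theorem~\ref{theorem:soft-to-bilinear},'' and your argument is precisely that reduction. You also correctly identify and resolve the one point where ``similar'' conceals a real difference --- since $\mathbf{C}$ now appears only in the constraint \eqref{eq:totalBilinear-constr} rather than in the objective, its tightness cannot be forced by minimization, and your observation that lowering each $\mathbf{C}_{i,j}$ to the SDP value of \eqref{eq:softBilinear-constr2}--\eqref{eq:softBilinear-constr3} only slackens \eqref{eq:totalBilinear-constr} (because $\boldsymbol{\Tilde{\lambda}} \geq 0$) while leaving the objective unchanged is exactly the right way to close that gap.
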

The proof of Theorem \ref{theorem:totalBilinear} is similar to the proof of Theorem \ref{theorem:soft-to-bilinear} and is thus omitted.
The optimization problem presented in \eqref{eq:totalBilinear} has a bilinear objective function and a bilinear constraint given in \eqref{eq:totalBilinear-constr}. 
Furthermore, all other constraints yield a convex set of decision variables.

\subsection{Reduction of Step Cost Constrained Problem to NLP}\label{ss:reduction-step-nlp}
The presence of the constraint given in \eqref{eq:step-cost-constrained-GMM-density-steering-constr} prevents us from using the results derived in Propositions \ref{prop:optimal-mean-steering} and \ref{prop:optimal-covariance-steering} to address Problem \ref{prob:step-cost-constrained-GMM-density-steering}. This is because the constraints in \eqref{eq:step-cost-constrained-GMM-density-steering-constr} are functions of the mean and covariance of both the state and the control processes at time step $k \in \curly{0, \dots, N-1}$. 
Since the other constraints of Problem \ref{prob:step-cost-constrained-GMM-density-steering} are shared with Problem \ref{prob:total-cost-constrained-GMM-density-steering}, we will focus on \eqref{eq:step-cost-constrained-GMM-density-steering-constr} and write $\E{J_k (x_k, u_k)}$ in terms of the decision variables $\lambda_{i, j}, \mathbf{\Bar{U}}_{i, j}$, and $ \mathbf{L}_{i, j}$.
To do so, we will use the law of iterated expectations and obtain $\E{J_k (x_k, u_k)} = \sum_{i=0}^{r-1} \sum_{j=0}^{q-1} p_i^0 \lambda_{i, j} h_{k}( \mathbf{\Bar{U}}_{i, j}, \mathbf{L}_{i, j}; \mu_i^0, \Sigma_i^0 )  $, where
\begin{align}
    h_k (\mathbf{\Bar{U}}_{i, j}, \mathbf{L}_{i, j} ; \mu_i^0, \Sigma_i^0 )  & :=   \lVert \mathbf{P}_k^u \mathbf{\Bar{U}}_{i, j} \rVert_{R_k}^2\nonumber \\ 
    & + \tr{R_k \mathbf{P}_k^u \mathbf{L}_{i, j} \Sigma_i^0  \mathbf{L}_{i, j}\transpose \mathbf{P}_k^{u \mathrm{T}} } \nonumber \\
    & + \lVert \mathbf{P}_k^x (\mathbf{\Gamma} \mu_i^0 + \mathbf{H_u} \mathbf{\Bar{U}}_{i, j}) - x'_k \rVert_{Q_k}^2 \nonumber \\
    & + \tr{Q_k \mathbf{P}_k^x \mathbf{G}_{i,j} \Sigma_i^0 \mathbf{G}_{i, j} \mathbf{P}_k^{x \mathrm{T}} }, \label{eq:h_func_def}
\end{align}
$\mathbf{G}_{i, j} := \mathbf{\Gamma} + \mathbf{H_u} \mathbf{L}_{i, j}$, 
$\mathbf{P}_k^x \in \R{n \times n(N+1)}$ and $\mathbf{P}_k^u \in \R{m \times mN}$ are block matrices whose $k$th block is equal to $I_n$ and $I_m$, respectively, such that $x_k = \mathbf{P}_k^x \mathbf{X}$ and $u_k = \mathbf{P}_k^u \mathbf{U}$.
Now, we can formulate the NLP associated with Problem \ref{prob:step-cost-constrained-GMM-density-steering} as follows:
\begin{subequations}\label{eq:nlp-step-cost-GMM-density-steering}
\begin{align}
    \min_{\mathcal{S}_2} & ~ \mathcal{J}_4 (\mathcal{S}_2) ~~ \text{s.t.} ~~  \eqref{eq:nlp-hard-GMM-density-steering-constr-2}, \eqref{eq:nlp-soft-GMM-density-steering-constr-0} \text{-} \eqref{eq:nlp-soft-GMM-density-steering-constr-3}, \\
    & \sum_{i=0}^{r-1} \sum_{j=0}^{q-1} p_i^0 \lambda_{i, j} h_{k}(\mathbf{\Bar{U}}_{i, j}, \mathbf{L}_{i, j}; \mu_i^0, \Sigma_i^0) \leq \kappa_k, \label{eq:nlp-step-cost-GMM-density-steering-constr} 
\end{align}
\end{subequations}
where the constraint in \eqref{eq:nlp-step-cost-GMM-density-steering-constr} is enforced for all $k \in \curly{0, \dots, N-1}$.
Note that the functions $h_{k}(\mathbf{\Bar{U}}_{i, j}, \mathbf{L}_{i, j}; \mu_i^0, \Sigma_i^0)$ are jointly convex in $(\mathbf{\Bar{U}}_{i, j}, \mathbf{L}_{i, j})$. 
However, the constraint given in \eqref{eq:nlp-soft-GMM-density-steering-constr-2} is non-convex. The constraint in \eqref{eq:nlp-soft-GMM-density-steering-constr-2} will be convexified in the final form of the associated optimization problem given in the following theorem (the convexification procedure is described in its proof). 

\begin{theorem}\label{theorem:stepBilinear}
    The optimal parameters of the policy $\pi \in \Pi_r$ given in \eqref{eq:gmm-policy-definition} that solve Problem \ref{prob:step-cost-constrained-GMM-density-steering} can be obtained by solving the following NLP with decision variables $\mathcal{S}_\mathrm{step} := \big( \boldsymbol{\Tilde{\lambda}},  \boldsymbol{\beta}, \mathbf{T}, \mathbf{p}_{N},$$\curly{\mu_i^{N}, \Sigma_i^{N}}_{i=0}^{q-1},$ $\curly{\mathbf{\Bar{U}}_{i, j},  \mathbf{M}_{i, j}, \mathbf{Y}_{i, j}, \mathbf{\Sigma}_{i, j}}_{i=0, j=0}^{r-1, q-1}, \curly{Y_{i, j}}_{i=0, j=0}^{q-1, t-1} \big)$:
    \begin{subequations}\label{eq:stepBilinear}
    \begin{align}
        \min_{\mathcal{S}_\mathrm{step}} & ~ \tr{\boldsymbol{\beta}\transpose \mathbf{T}} ~
        \mathrm{s.t.}~ \eqref{eq:nlp-soft-GMM-density-steering-constr-1}, \eqref{eq:softBilinear-constr0}, \eqref{eq:softBilinear-constr1}, \eqref{eq:softBilinear-constr4}, \eqref{eq:softBilinear-constr5}, \label{eq:stepBilinear-objective}\\
        & ~ \Sigma_j^N = \Phi_{N:0} \Sigma_i^0 \Phi_{N:0}\transpose + \Phi_{N:0} \mathbf{Y}_{i, j}\transpose \mathbf{B_N}\transpose \nonumber \\
        & ~ \qquad \qquad + \mathbf{B_N} \mathbf{Y}_{i, j} \Phi_{0:N}\transpose + \mathbf{B_N} \mathbf{M}_{i, j} \mathbf{B_N}\transpose, \label{eq:stepBilinear-constr0} \\ 
        & ~ \mathbf{\Sigma}_{i, j} = \mathbf{\Gamma} \Sigma_i^0 \mathbf{\Gamma}\transpose + \mathbf{\Gamma} \mathbf{Y}_{i, j}\transpose \mathbf{H_u}\transpose + \mathbf{H_u} \mathbf{Y}_{i, j} \mathbf{\Gamma}\transpose \nonumber \\
        & ~ \qquad \qquad + \mathbf{H_u} \mathbf{M}_{i, j} \mathbf{H_u}\transpose, \label{eq:stepBilinear-constr1} \\
        & \begin{bmatrix}
            \mathbf{M}_{i, j} & \mathbf{Y}_{i, j} \\
            \mathbf{Y}_{i, j}\transpose & \Sigma_{i}^0
        \end{bmatrix} \succeq 0,\label{eq:stepBilinear-constr2} \\
        & ~ \sum_{i=0}^{r-1} \sum_{j=0}^{q-1} \Tilde{\lambda}_{i,j} \Big ( \lVert \mathbf{P}_k^u \mathbf{\Bar{U}}_{i, j} \rVert_{\Tilde{R}_k}^2 + \tr{\Tilde{R}_k \mathbf{P}_k^u \mathbf{M}_{i, j} \mathbf{P}_k^{u \mathrm{T}} } \nonumber \\
        & ~ \qquad + \lVert \mathbf{P}_k^x (\mathbf{\Gamma} \mu_i^0 + \mathbf{H_u} \mathbf{\Bar{U}}_{i, j}) - x'_k \rVert_{\Tilde{Q}_k}^2 \nonumber \\
        & ~ \qquad + \tr{\Tilde{Q}_k \mathbf{P}_k^x \mathbf{\Sigma}_{i,j} \mathbf{P}_k^{x \mathrm{T}} } \Big) \leq 1 \label{eq:stepBilinear-constr3},
    \end{align}
    \end{subequations}
    where $\mathbf{M}_{i, j} \in \S{m N}^{+}$, $\mathbf{Y}_{i, j} \in \R{mN \times n}$, $\Tilde{R}_k = (1/\kappa_k) R_k$, $\Tilde{Q}_k = (1/\kappa_k) Q_k$. 
    The constraints in \eqref{eq:stepBilinear-constr0}-\eqref{eq:stepBilinear-constr2} are imposed for all $(i, j) \in \curly{0, \dots, r-1} \times \curly{0, \dots, q-1}$ and the constraint in \eqref{eq:stepBilinear-constr3} is imposed for all $k \in \curly{0, \dots, N-1}$.
    Furthermore, the optimal feedback policy parameters $\mathbf{L}_{i, j} = \vertcat{L^{i, j}_{0}, \dots, L^{i, j}_{N-1}}$ for each $(i, j, k) \in \curly{0, \dots, r-1} \times \curly{0, \dots, q-1} \times \curly{0, \dots, N-1}$ are given by
    \begin{subequations}
    \begin{align}
        L_{k}^{i, j} & = K_k^{i, j} \Tilde{A}^{i, j}_{k-1} \Tilde{A}^{i, j}_{k-2} \dots \Tilde{A}^{i, j}_{0}, \label{eq:state-initial0}\\
        \Tilde{A}^{i, j}_{k} & = (A_k + B_k K^{i, j}_{k}), \label{eq:state-initial1}\\
        K^{k}_{i, j} & = \Bar{Y}^{i, j}_{k} (\Sigma_i^k)^{-1},\label{eq:state-initial2}
    \end{align}
    \end{subequations}
    where $\Bar{Y}_{k}^{i, j}$ is the respective component of the global minimizer of the following SDP:
    \begin{subequations}\label{eq:stepBilinear-correction}
    \begin{align}
        \min_{\mathcal{S}^{i, j, k}_{\mathrm{aux}}} & ~ \tr{R_k M^{i, j}_k} \label{eq:stepBilinear-correction-objective}\\
        \mathrm{s.t.} & ~ \Sigma^{k+1}_{i, j} = A_k \Sigma^{k}_{i, j} A_k\transpose + A_k Y^{i, j \, \mathrm{T}}_{k} B_k\transpose \nonumber \\ 
        & ~ + B_k Y^{i, j}_{k} A_k\transpose + B_k M^{i, j}_{k} B_k\transpose, \label{eq:stepBilinear-correction-constr0}\\
        & ~ \begin{bmatrix}
            M^{i, j}_{k} & Y^{i, j}_{k} \\
            Y^{i, j}_{k\mathrm{T}} & \Sigma_{i, j}^{k}
        \end{bmatrix} \succeq 0, \label{eq:stepBilinear-correction-constr1}
    \end{align}
    \end{subequations}
    where $\mathcal{S}_{\mathrm{aux}}^{i, j, k} := \curly{M_k^{i, j} \in \S{m},  Y_k^{i, j} \in \R{m \times n}}$,  $\Sigma^{k+1}_{i, j} = \mathbf{P}_{k+1}^x \mathbf{\Bar{\Sigma}}_{i, j} \mathbf{P}_{k+1}^{x\mathrm{T}}$, $\Sigma^{k}_{i, j} = \mathbf{P}_k^x \mathbf{\Bar{\Sigma}}_{i, j} \mathbf{P}_k^{x\mathrm{T}}$, and $\mathbf{\Bar{\Sigma}}_{i, j}$ is the optimal value of $\mathbf{\Sigma}_{i, j}$ obtained by solving the problem in \eqref{eq:stepBilinear}.
\end{theorem}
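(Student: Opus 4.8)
The plan is to start from the finite-dimensional reformulation \eqref{eq:nlp-step-cost-GMM-density-steering} of Problem \ref{prob:step-cost-constrained-GMM-density-steering}, which is exact over the policy class $\Pi_r$: Proposition \ref{prop:gmm-steering-proposition} identifies the terminal GMM parameters produced by a $\pi\in\Pi_r$ with the affine/quadratic expressions in $(\mathbf{\Bar{U}}_{i,j},\mathbf{L}_{i,j})$, and the law of iterated expectations gives $\E{J_k(x_k,u_k)}=\sum_{i,j}p_i^0\lambda_{i,j}h_k(\mathbf{\Bar{U}}_{i,j},\mathbf{L}_{i,j};\mu_i^0,\Sigma_i^0)$ with $h_k$ as in \eqref{eq:h_func_def}. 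The only sources of nonconvexity in \eqref{eq:nlp-step-cost-GMM-density-steering} are the matrix-quadratic terminal-covariance assignment \eqref{eq:nlp-soft-GMM-density-steering-constr-2} and the bilinear products weighted by $\lambda_{i,j}$ and $\beta_{i,j}$. I would handle these by (a) lifting the feedback gains out through a change of variables and an SDP relaxation, (b) convexifying the terminal GMM-Wasserstein term exactly as in Corollary \ref{corrolary:alternative-cov-steer-expr} and Theorem \ref{theorem:soft-to-bilinear}, and (c) recovering an admissible policy from the relaxed optimizer via a per-time-step factorization SDP.

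For step (a), introduce $\mathbf{Y}_{i,j}:=\mathbf{L}_{i,j}\Sigma_i^0$ and $\mathbf{M}_{i,j}:=\mathbf{L}_{i,j}\Sigma_i^0\mathbf{L}_{i,j}\transpose$. Since $\Sigma_i^0\succ\bm{0}$ the map $\mathbf{L}_{i,j}\mapsto\mathbf{Y}_{i,j}$ is a bijection and $\mathbf{M}_{i,j}=\mathbf{Y}_{i,j}(\Sigma_i^0)^{-1}\mathbf{Y}_{i,j}\transpose$; moreover every quantity that is quadratic in $\mathbf{L}_{i,j}$---the terminal covariance $\mathbf{H}_{i,j}\Sigma_i^0\mathbf{H}_{i,j}\transpose$, the full trajectory covariance $(\mathbf{\Gamma}+\mathbf{H_u}\mathbf{L}_{i,j})\Sigma_i^0(\mathbf{\Gamma}+\mathbf{H_u}\mathbf{L}_{i,j})\transpose$, and the state/input covariances inside $h_k$---becomes affine in $(\mathbf{Y}_{i,j},\mathbf{M}_{i,j})$. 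Naming the trajectory covariance $\mathbf{\Sigma}_{i,j}$ and the terminal one $\Sigma_j^N$ yields the affine identities \eqref{eq:stepBilinear-constr1} and \eqref{eq:stepBilinear-constr0}, and substituting these into \eqref{eq:nlp-step-cost-GMM-density-steering-constr}, together with $\Tilde{\lambda}_{i,j}=p_i^0\lambda_{i,j}$ and the rescalings $\Tilde{R}_k=(1/\kappa_k)R_k$, $\Tilde{Q}_k=(1/\kappa_k)Q_k$, gives \eqref{eq:stepBilinear-constr3}. The only nonconvex coupling that remains is $\mathbf{M}_{i,j}=\mathbf{Y}_{i,j}(\Sigma_i^0)^{-1}\mathbf{Y}_{i,j}\transpose$, which I would relax to $\mathbf{M}_{i,j}\succeq\mathbf{Y}_{i,j}(\Sigma_i^0)^{-1}\mathbf{Y}_{i,j}\transpose$, i.e.\ via Schur's complement to the LMI \eqref{eq:stepBilinear-constr2}. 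For the terminal GMM-Wasserstein cost I would reuse, verbatim, the construction in the proof of Theorem \ref{theorem:soft-to-bilinear}: introduce $\mathbf{T}$ and $Y_{i,j}$, encode $W_\mathcal{N}^2(\mu_i^N,\Sigma_i^N,\mu_j^d,\Sigma_j^d)$ through \eqref{eq:softBilinear-constr4}--\eqref{eq:softBilinear-constr5} using Lemmas \ref{lemma:sdp-nuc-norm}--\ref{lemma:spd-equivalent-form}, and note that the associated inequalities are tight at optimality by the same argument. This produces \eqref{eq:stepBilinear}, and since any $\pi\in\Pi_r$ feasible for Problem \ref{prob:step-cost-constrained-GMM-density-steering} maps to a feasible point of \eqref{eq:stepBilinear} with equal objective (taking $\mathbf{M}_{i,j}=\mathbf{L}_{i,j}\Sigma_i^0\mathbf{L}_{i,j}\transpose$), \eqref{eq:stepBilinear} lower-bounds the optimal cost.

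It then remains to show the bound is attained and to extract the policy (step (c)). From an optimizer of \eqref{eq:stepBilinear}, for each $(i,j)$ the block matrix $\mathbf{\Bar{\Sigma}}_{i,j}$ is a PSD state-trajectory covariance profile whose $(0,0)$-block is $\Sigma_i^0$, whose $(N,N)$-block is $\Sigma_j^N$, and which is consistent with the dynamics. I would realize it by a time-varying state-feedback law: for each $(i,j,k)$, with $\Sigma_{i,j}^k=\mathbf{P}_k^x\mathbf{\Bar{\Sigma}}_{i,j}\mathbf{P}_k^{x\mathrm{T}}$ and $\Sigma_{i,j}^{k+1}=\mathbf{P}_{k+1}^x\mathbf{\Bar{\Sigma}}_{i,j}\mathbf{P}_{k+1}^{x\mathrm{T}}$, the correction SDP \eqref{eq:stepBilinear-correction} returns the minimum-energy pair $(M_k^{i,j},\Bar{Y}_k^{i,j})$ satisfying the one-step covariance recursion \eqref{eq:stepBilinear-correction-constr0}, and $K_k^{i,j}=\Bar{Y}_k^{i,j}(\Sigma_{i,j}^k)^{-1}$; accumulating along the closed loop as in \eqref{eq:state-initial0}--\eqref{eq:state-initial2} gives the $x_0$-feedback gains $\mathbf{L}_{i,j}$, while $\mathbf{\Bar{U}}_{i,j}$ is recovered from the mean-steering identity \eqref{eq:nlp-soft-GMM-density-steering-constr-1} (equivalently, from Proposition \ref{prop:optimal-mean-steering}) and $\lambda_{i,j}=\Tilde{\lambda}_{i,j}/p_i^0$. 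Applying Proposition \ref{prop:gmm-steering-proposition} to the resulting $\pi\in\Pi_r$ then confirms that its terminal state distribution is the GMM $\curly{p_i^N,\mu_i^N,\Sigma_i^N}$, that \eqref{eq:step-cost-constrained-GMM-density-steering-constr} holds, and that its cost equals $\tr{\boldsymbol{\beta}\transpose\mathbf{T}}=W_\mathrm{GMM}^2(\rho_N,\rho_d)$, so \eqref{eq:stepBilinear} is solved by the optimal policy.

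The main obstacle is the tightness of the relaxation together with the exactness of the recovery. The standard covariance-steering argument---shrink $\mathbf{M}_{i,j}$ onto the boundary of \eqref{eq:stepBilinear-constr2} because that only relaxes costs---does not apply verbatim here: shrinking $\mathbf{M}_{i,j}$ shrinks $\Sigma_j^N$ through \eqref{eq:stepBilinear-constr0}, and the GMM-Wasserstein objective is not monotone in the terminal covariances (it is minimized when $\Sigma_i^N$ is proportional to $\Sigma_j^d$), so the relaxed optimum need not lie on that boundary. The crux is therefore to show that the profile $\mathbf{\Bar{\Sigma}}_{i,j}$ selected by \eqref{eq:stepBilinear} is exactly reproduced by the feedback law built from \eqref{eq:stepBilinear-correction}---equivalently, that the LMI \eqref{eq:stepBilinear-correction-constr1} is active at the minimizer of \eqref{eq:stepBilinear-correction}, which is precisely where the minimization of $\tr{R_k M_k^{i,j}}$ with $R_k\in\S{m}^{++}$ and Assumption \ref{assumption:linear-controllable} are used---and to carefully track the three index sets (the $r$ initial, $q$ intermediate, and $t$ desired components) so that the terminal-covariance identities \eqref{eq:stepBilinear-constr0} hold simultaneously for every initial mode feeding a given intermediate mode. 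The remaining parts---the affine lifting and the convexification of the per-step and Wasserstein terms---are routine once Corollary \ref{corrolary:alternative-cov-steer-expr} and Lemmas \ref{lemma:sdp-nuc-norm}--\ref{lemma:spd-equivalent-form} are in hand.
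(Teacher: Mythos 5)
Your reduction follows the paper's proof essentially step for step: the lifting $\mathbf{Y}_{i,j}=\mathbf{L}_{i,j}\Sigma_i^0$, $\mathbf{M}_{i,j}=\mathbf{Y}_{i,j}(\Sigma_i^0)^{-1}\mathbf{Y}_{i,j}\transpose$, the Schur-complement relaxation to \eqref{eq:stepBilinear-constr2}, the verbatim reuse of the Wasserstein machinery from Theorem~\ref{theorem:soft-to-bilinear}, and the per-step correction SDP \eqref{eq:stepBilinear-correction} for policy recovery are all exactly what the paper does. You also correctly diagnose the one non-routine point: because the GMM-Wasserstein objective is not monotone in the terminal covariances, the relaxed optimum need not satisfy $\mathbf{M}_{i,j}=\mathbf{Y}_{i,j}(\Sigma_i^0)^{-1}\mathbf{Y}_{i,j}\transpose$, so the naive gain $\mathbf{L}_{i,j}=\mathbf{Y}_{i,j}(\Sigma_i^0)^{-1}$ does not reproduce $\Sigma_j^N$ and an extra argument is needed.

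Where you stop short is precisely where the paper supplies its argument. You state that the crux is to show the LMI \eqref{eq:stepBilinear-correction-constr1} is active at the minimizer of \eqref{eq:stepBilinear-correction}, but you do not actually establish this or explain why the lifted covariance profile is realizable at all when the slack $\boldsymbol{\mathcal{W}}_{i,j}:=\mathbf{M}_{i,j}-\mathbf{Y}_{i,j}(\Sigma_i^0)^{-1}\mathbf{Y}_{i,j}\transpose$ is nonzero. The paper closes this in two moves: first, it exhibits an admissible \emph{randomized} policy \eqref{eq:theorem-stepBilinear-helper-policy} that injects Gaussian noise $\varpi_{i,j}\sim\mathcal{N}(\mathbf{0},\boldsymbol{\mathcal{W}}_{i,j})$ into the control, which realizes exactly the lifted covariances $\mathbf{\Sigma}_{i,j}$ and $\Sigma_j^N$ and hence satisfies \eqref{eq:stepBilinear-constr3}; second, it invokes \cite[Lemma 1]{p:balci2022exact} to replace this randomized policy by a state-feedback policy with identical first two moments, and \cite[Theorem 3]{p:balci2022exact} for the tightness of \eqref{eq:stepBilinear-correction-constr1} at the optimum of the correction SDP, which is what justifies extracting the deterministic gains via \eqref{eq:state-initial0}--\eqref{eq:state-initial2}. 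Your remark that $R_k\in\S{m}^{++}$ and Assumption~\ref{assumption:linear-controllable} are the operative hypotheses points in the right direction, but without the randomized-policy construction (or an explicit appeal to the exactness result it enables) the recovery step of your proof remains an assertion rather than an argument.
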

\begin{proof}
    The derivations of the expressions for the objective function and constraints \eqref{eq:softBilinear-constr0}, \eqref{eq:softBilinear-constr1}, \eqref{eq:softBilinear-constr4} and \eqref{eq:softBilinear-constr5} are similar to the derivation of \eqref{eq:totalBilinear}. 
    To convexify the constraint in \eqref{eq:nlp-soft-GMM-density-steering-constr-2}, we expand the term $\mathbf{H}_{i,j}$ given in \eqref{eq:H_ij_def} and  rewrite the constraint as $\Sigma_j^N = \Phi_{N:0} \Sigma_i^0 \Phi_{N:0} + \Phi_{N:0} \Sigma_i^0 \mathbf{L}_{i, j}\transpose \mathbf{B_N}\transpose + \mathbf{B_N} \mathbf{L}_{i, j} \Sigma_i^0 \Phi_{N:0}\transpose +\mathbf{B_N} \mathbf{L}_{i, j} \Sigma_{i}^0 \mathbf{L}_{i, j}\transpose \mathbf{B_N}\transpose$. 
    Then, we introduce the following variable transformations:
    \begin{align}\label{eq:transformation12}
    \mathbf{Y}_{i, j} & = \mathbf{L}_{i, j} \Sigma_i^0,~~~ 
    \mathbf{M}_{i, j} = \mathbf{Y}_{i, j} (\Sigma_i^{0})^{-1} \mathbf{Y}_{i, j}\transpose  
    \end{align}
    for all $(i, j) \in \curly{0, \dots, r-1} \times \curly{0, \dots, q-1}$. 
    Applying the variable transformation in \eqref{eq:transformation12} to constraint \eqref{eq:nlp-soft-GMM-density-steering-constr-2}, we obtain \eqref{eq:stepBilinear-constr0}.
    For notational simplicity, we add the constraint $\mathbf{\Sigma}_{i, j} = \mathbf{G}_{i, j} \Sigma_i^0 \mathbf{G}_{i, j}\transpose$, expand $\mathbf{G}_{i, j}$ and apply the variable transformations \eqref{eq:transformation12} to obtain \eqref{eq:stepBilinear-constr3} after dividing both sides by $\kappa_k$. 
    Lastly, by relaxing the second equality in \eqref{eq:transformation12} to an inequality and applying Schur's complement, we obtain \eqref{eq:stepBilinear-constr2}.

Note that we need the second equality in \eqref{eq:transformation12} to hold at optimality. This condition is satisfied, if the matrix $V_{i, j} = \Big[ \begin{smallmatrix} \mathbf{M}_{i, j} & \mathbf{Y}_{i, j} \\ \mathbf{Y}_{i, j}\transpose & \Sigma^{0}_{i} \end{smallmatrix} \Big]$ has rank $n$. Otherwise, $\mathbf{M}_{i, j} - \mathbf{Y}_{i, j} (\Sigma_i^0)^{-1} \mathbf{Y}_{i, j}\transpose = \boldsymbol{\mathcal{W}}_{i, j} \in \S{mN}^{+}$ with $\boldsymbol{\mathcal{W}}_{i, j}\neq 0$. Thus, $(\Phi_{N:0} + \mathbf{B_N}\mathbf{L}_{i, j}) \Sigma_i^0 (\Phi_{N:0} + \mathbf{B_N}\mathbf{L}_{i, j})\transpose$ will not be equal to $\Sigma_j^N$, since the feedback policy matrix is obtained as $\mathbf{L}_{i, j} = \mathbf{Y}_{i, j} (\Sigma_i^0)^{-1}$. However, even if $\boldsymbol{\mathcal{W}}_{i, j} \neq \mathbf{0}$, there still exists a randomized policy $\pi_h$ that steers the state covariance of system \eqref{eq:linear-system-eq} from $\Sigma_i^0$ at $k=0$ to $\Sigma_j^N$ at $k=N$, where 
    \begin{align}\label{eq:theorem-stepBilinear-helper-policy}
        \pi_h(x_0) = L_k^{i, j} (x_0 - \mu_i^0) + \Bar{u}_{k}^{i,j} + w_k^{i, j},
    \end{align}
    where $w_k^{i, j}  = \mathbf{P}_k^u \varpi_{i, j}$, $\varpi_{i, j} \sim \mathcal{N}(\mathbf{0}, \boldsymbol{\mathcal{W}}_{i, j})$. 
    Now, observe that the policy in \eqref{eq:theorem-stepBilinear-helper-policy} satisfies the constraints \eqref{eq:stepBilinear-constr0}, \eqref{eq:stepBilinear-constr1} and thus, the constraint in \eqref{eq:stepBilinear-constr3} is trivially satisfied. 
    From \cite[Lemma 1]{p:balci2022exact}, there exists a state feedback policy of the form:
    \begin{align}
        \pi_s(x_k) = K_k^{i, j} (x_k - \mu_i^0) + u_{k}^{i, j} + v^{i,j}_k
    \end{align}
    where $v_k^{i, j} \sim \mathcal{N}(\bm{0}, V_{k}^{i, j})$ such that the first two moments of the state process $x_k$ and control process $u_k$ under policy $\pi_h$ are identical
    with the respective processes under policy $\pi_s$, $\forall k$.
    Finally, \cite[Theorem 3]{p:balci2022exact} implies that the  constraint in \eqref{eq:stepBilinear-correction-constr1} is tight at optimality and thus, there exists a state feedback policy that enforces the constraint \eqref{eq:stepBilinear-constr3}, $\forall k$. 
    Lastly, the initial state parametrized feedback matrices $L_k^{i, j}$ can be obtained from \eqref{eq:state-initial0}-\eqref{eq:state-initial2}.
    This completes the proof.
\end{proof}

\begin{remark}
    Note that the optimization problems presented in \eqref{eq:softBilinear}, \eqref{eq:totalBilinear}, and \eqref{eq:stepBilinear} are non-convex because of the bilinear objective functions and constraints. 
    However, when one set of variables is fixed, the objective function and constraints become linear in the remaining decision variables. Leveraging this property enables us to address these problems using block coordinate descent schemes \cite{p:tseng2001block}, as elaborated in Section \ref{s:solution-procedure}.
\end{remark}

\begin{remark}
    The expectation-type constraints outlined in \eqref{eq:step-cost-constrained-GMM-density-steering-constr} offer a means to approximate chance constraints that are of the form $\P[]{ \lVert u_k \rVert_2^2 \leq u_{\text{max}}} \geq 1 -\delta$ for $\delta \in (0, 1)$ through the application of Markov's inequality. However, a comprehensive discussion on this matter falls beyond the scope of this paper.
\end{remark}

\begin{figure}
    \centering
    \resizebox{\linewidth}{!}{
        \begin{tikzpicture}
    \drawellipseInit{0}{5}{-10}{1.0}{1.5}{0}{0.5}
    \drawellipseInit{0.0}{1.5}{-20}{1.5}{0.6}{1}{0.5}

    \drawellipseTerm{4}{4.3}{10}{1.0}{1.3}{0}{0.8}
    \drawellipseTerm{4}{1.5}{20}{1.5}{0.4}{1}{0.2}

    \drawellipseDes{8}{5.5}{-20}{1}{0.5}{0}{0.5}
    \drawellipseDes{8}{4.1}{60}{1}{1}{1}{0.3}
    \drawellipseDes{8}{1.9}{60}{2}{0.5}{2}{0.2}

    \coordinate (i0) at (1.0, 5.0);
    \coordinate (i1) at (0.6, 1.9);
    \coordinate (t0) at (3, 4.2);
    \coordinate (t1) at (3.7, 1.8);

    \coordinate (t00) at (5.0, 4.3);
    \coordinate (t11) at (5.4, 2.0);
    
    \coordinate (d0) at (7.1, 5.7);
    \coordinate (d1) at (7.1, 3.9);
    \coordinate (d2) at (7.6, 2.2);

    \coordinate (O) at (0, 0);
    \coordinate (e0) at (\ex, 0);
    \coordinate (e1) at (0, \ey);
    
    \node at ($(i0)+(0.4, 0.2)$) {$\lambda_{0, 0}$};
    \draw [line width=2, dashed, color=black, ->] (i0) -- (t0);
    \node at ($(i0)+(0.2, -0.7)$) {$\lambda_{0, 1}$};
    \draw [line width=1, dashed, color=black, ->] (i0) -- (t1);
    \node at ($(i1)+(0.2, 0.8)$) {$\lambda_{1, 0}$};
    \draw [line width=2, dashed, color=black, ->] (i1) -- (t0);
    \node at ($(i1)+(1.2, -0.3)$) {$\lambda_{1, 1}$};
    \draw [line width=1, dashed, color=black, ->] (i1) -- (t1);

    \node at ($(t00)+(0.4, 0.7)$) {\textcolor{blue}{$\beta_{0, 0}$}};
    \draw [line width=2, dashed, color=blue, ->] (t00) -- (d0);
    \node at ($(t00)+(0.8, 0.1)$) {\textcolor{blue}{$\beta_{0, 1}$}};
    \draw [line width=2, dashed, color=blue, ->] (t00) -- (d1);
    \node at ($(t00)+(0.5, -0.7)$) {\textcolor{blue}{$\beta_{0, 2}$}};
    \draw [line width=1, dashed, color=blue, ->] (t00) -- (d2);

    \node at ($(t11)+(-0.1, 0.7)$) {\textcolor{blue}{$\beta_{1, 0}$}};
    \draw [line width=1, dashed, color=blue, ->] (t11) -- (d0);
    \node at ($(t11)+(0.9, 0.5)$) {\textcolor{blue}{$\beta_{1, 1}$}};
    \draw [line width=1, dashed, color=blue, ->] (t11) -- (d1);  
    \node at ($(t11)+(0.7, -0.3)$) {\textcolor{blue}{$\beta_{1, 2}$}};
    \draw [line width=2, dashed, color=blue, ->] (t11) -- (d2);  
\end{tikzpicture}
    }
    \caption{
     2-$\sigma$ confidence ellipses corresponding to the Gaussian components of the GMM representing the PDF of the initial, terminal and desired states, respectively. The opacity of the ellipses reflects the weights of the GMMs whereas the thickness of the black and blue arrows reflects the values of policy weight parameters $\lambda_{i, j}$ and Wasserstein-GMM parameters $\beta_{i, j}$, respectively. }
    \label{fig:problem-illustration}
\end{figure}
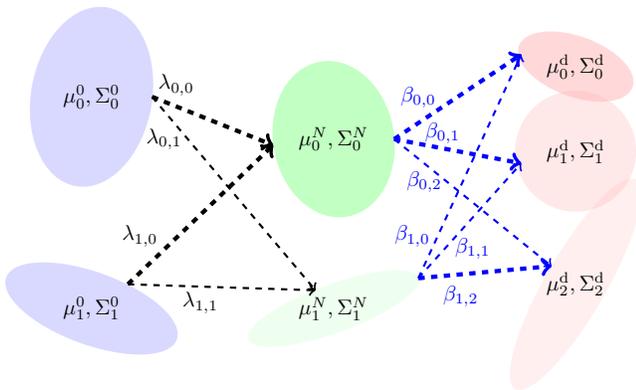

\section{Block Coordinate Descent Solution Procedure}\label{s:solution-procedure}
In Section \ref{s:formulation-nlp}, we showed how Problems \ref{prob:soft-GMM-density-steering}-\ref{prob:step-cost-constrained-GMM-density-steering} can be reduced to NLPs. However, unlike Problem \ref{prob:hard-GMM-density-steering}, they cannot be directly reduced to an LP or any other standard form of convex optimization problems. Nevertheless, we observed that when one set of variables is fixed, the remaining ones yield a convex optimization problem for each of these problems. This observation allows us to design a BCD-based \cite{p:tseng2001block} method to solve these problems by using convex optimization techniques. 
This section details the algorithmic solution procedure to achieve this goal.

The BCD algorithms are used to solve problems of the form:
\begin{subequations}\label{eq:bilinear-sample-problem}
\begin{align}
    \min_{x\in \R{n}, y\in \R{m}} & ~ f_0(x, y) \label{eq:bilinear-sample-problem-objective}\\
    \text{s.t.} & ~ f_i(x, y) \leq 0, ~~ \forall i \in \curly{1, \dots, N_c} \label{eq:bilinear-sample-problem-constr}
\end{align}
\end{subequations}
where the function $f_{i}(x, y)$ is convex with respect to $x \in \R{n}$ when $y \in \R{m}$ is fixed and convex with respect to $y \in \R{m}$ when $x \in \R{n}$ is fixed, for all $i \in \curly{0, \dots, N_c}$.
Note that this does not mean that $f_i(x, y)$ is jointly convex in $x, y$.
To see this, consider the function $f(x, y) = xy$ where $x, y \in \R{}$. 
For each fixed $x$ or $y$, the function $f(x, y)$ is a linear function of the other variable. 
However, its Hessian $H_{f}(\Bar{x}, \Bar{y}) = \left[ \begin{smallmatrix}
    0 & 1 \\ 1 & 0
\end{smallmatrix} \right]$ is constant for all $(\Bar{x}, \Bar{y})$ and 
$H_{f}(\Bar{x}, \Bar{y}) \notin \S{2}^{+}$.

The BCD schemes operate by fixing one set of variables and solving for the other, iterating this process over each block variable in a cyclic manner until convergence. 
For the example problem provided in \eqref{eq:bilinear-sample-problem}, the iterations proceed as follows, starting from an initial guess $(x_0, y_0)$:
\begin{subequations}\label{eq:bilinear-sample-iterations}
\begin{align}
    x_{k+1} & = \argmin_{x} f_0(x, y_k) ~ && \text{s.t.} ~ f_i(x, y_k) \leq 0   \label{eq:bilinear-sample-iterations-x} \\
    y_{k+1} & = \argmin_{y} f_0(x_{k+1}, y) ~ && \text{s.t.} ~ f_i(x_{k+1}, y) \leq 0\label{eq:bilinear-sample-iterations-y}
\end{align}
\end{subequations}
Iterations in \eqref{eq:bilinear-sample-iterations} continue until a convergence criterion, such as $f_0(x_k, y_k) - f_0(x_{k+1}, y_{k+1}) \leq \epsilon$ or $\lVert x_{k+1} - x_{k} \rVert + \lVert y_{k+1} - y_k \rVert \leq \epsilon$, is satisfied. Assuming each sub-problem \eqref{eq:bilinear-sample-iterations-x} and \eqref{eq:bilinear-sample-iterations-y} has a unique global minimizer, the BCD scheme outlined in \eqref{eq:bilinear-sample-iterations} is guaranteed to converge to a local minimizer as shown in \cite[Theorem 2.3]{p:yangyang2013blockcd}. However, since the problem in \eqref{eq:bilinear-sample-problem} can be non-convex, the procedure described in \eqref{eq:bilinear-sample-iterations} cannot ensure convergence to a global minimizer.

Our BCD scheme operates by isolating the variables responsible for bilinearity. 
To address the problem in \eqref{eq:softBilinear} using BCD, we partition the decision variables into two blocks: $\mathcal{S}_\mathrm{soft}^{1} :=  \big(\mathbf{C}, \mathbf{T}, \curly{L_{i, j}}_{i=0, j=0}^{r-1, q-1}, \curly{Y_{i, j}}_{i=0, j=0}^{q-1, t-1}, \curly{\mu_i^N, \Sigma_i^N}_{i=0}^{q-1} \big)$ and $\mathcal{S}_\mathrm{soft}^{2} = ( \boldsymbol{\Tilde{\lambda}}, \boldsymbol{\beta}, \mathbf{p}_{N})$. 
Initially, we set the variables in $\mathcal{S}_{\mathrm{soft}}^{2}$ to fixed values $\boldsymbol{\Tilde{\lambda}} = \boldsymbol{\Tilde{\lambda}}_k$, $\boldsymbol{\beta} = \boldsymbol{\beta}_k$, $\mathbf{p}_N = \mathbf{p}_{N, k}$ and proceed to solve for $\mathcal{S}_{\mathrm{soft}}^{1}$. 
The optimization sub-problem associated with updating $\mathcal{S}_{\mathrm{soft}}^{1}$ is given as follows:
\begin{align} \label{eq:softBilinearSubProblem1}
    \min_{\mathcal{S}_\mathrm{soft}^1} & ~ \mathrm{tr}  (\mathbf{C}\transpose \boldsymbol{\Tilde{\lambda}}_k ) + \kappa \tr{\mathbf{T}\transpose \boldsymbol{\beta}_k}~~  \text{s.t.}  ~ \eqref{eq:softBilinear-constr2}\text{-}\eqref{eq:softBilinear-constr5}~\text{hold}  
\end{align}
Here, the subscript $k$ denotes the values of the variables obtained in the $k$th step of the BCD algorithm. 
Once the problem in \eqref{eq:softBilinearSubProblem1} is solved and the decision variables in $\mathcal{S}_{\mathrm{soft}}^{1}$ are fixed, we proceed to solve the sub-problem associated with updating $\mathcal{S}_{\mathrm{soft}}^{2}$ which is given as:
\begin{align}\label{eq:softBilinearSubProblem2}
    \min_{\mathcal{S}_\mathrm{soft}^{2}} & ~ \mathrm{tr} ( \mathbf{C}_{k+1}\transpose \boldsymbol{\Tilde{\lambda}}) + \kappa \tr{\mathbf{T}_{k+1}\transpose \boldsymbol{\beta} } ~~
    \text{s.t.}  ~ \eqref{eq:softBilinear-constr0}, \eqref{eq:softBilinear-constr1}~\text{hold} 
\end{align}
By iteratively solving sub-problems \eqref{eq:softBilinearSubProblem1} and \eqref{eq:softBilinearSubProblem2}, we can solve problem \eqref{eq:totalBilinear}. Note that sub-problem \eqref{eq:softBilinearSubProblem1} is an SDP whereas sub-problem \eqref{eq:softBilinearSubProblem2} is an LP. 

Furthermore, the partition of the decision variables to solve the problem in \eqref{eq:totalBilinear} is given as $\mathcal{S}_{\mathrm{total}}^{1} := \mathcal{S}_\mathrm{soft}^{1}$ and $\mathcal{S}_\mathrm{total}^{2} := \mathcal{S}_\mathrm{soft}^{2}$. In the BCD approach, the objective function in \eqref{eq:softBilinearSubProblem1} is replaced with $\tr{\mathbf{T}\transpose \boldsymbol{\beta}_k}$ and $ \mathrm{tr} ( \mathbf{C}\transpose \boldsymbol{\Tilde{\lambda}}_k )  \leq \kappa$ is added to the constraints in $\mathcal{S}_\mathrm{total}^1$ update step:
\begin{subequations}\label{eq:totalBilinearSubProblem1}
\begin{align}
    \min_{\mathcal{S}_\mathrm{total}^1} & ~  \tr{\mathbf{T}\transpose \boldsymbol{\beta}_k},~~\text{s.t.}~ \eqref{eq:softBilinear-constr2}\text{-}\eqref{eq:softBilinear-constr5}~\text{hold} \\
    & ~ \mathrm{tr} \big( { \mathbf{C}\transpose \boldsymbol{\Tilde{\lambda}}_k } \big) \leq \kappa \label{eq:totalBilinearSubProblem1-constr}
\end{align}
\end{subequations}
Similarly, the objective function is replaced by $\tr{\mathbf{T}_{k+1}\transpose \boldsymbol{\beta}}$ whereas $\mathrm{tr} (\mathbf{C}_{k+1}\transpose \boldsymbol{\beta}) \leq \kappa$ is added to the constraints for $\mathcal{S}_\mathrm{total}^2$ update step:
\begin{subequations}\label{eq:totalBilinearSubProblem2}
\begin{align}
    \min_{\mathcal{S}_\mathrm{total}^2} & ~ \tr{ \mathbf{T}_{k+1}\transpose \boldsymbol{\beta} },~~\text{s.t.} ~ \eqref{eq:softBilinear-constr0}, \eqref{eq:softBilinear-constr1}~\text{hold}\\
    & ~ \mathrm{tr} (\mathbf{C}_{k+1}\transpose \boldsymbol{\Tilde{\lambda}}) \leq \kappa \label{eq:totalBilinearSubProblem2-constr}
\end{align}
\end{subequations}

Lastly, the decision variables $\mathcal{S}_\mathrm{step}$ of the problem in \eqref{eq:stepBilinear} are separated into blocks 
$\mathcal{S}^{1}_\mathrm{step} = \big(\mathbf{T},\curly{\mu_i^{N}, \Sigma_i^{N}}_{i=0}^{q-1}$, $\{Y_{i, j}\}_{i=0, j=0}^{q-1, t-1}$, $\{\mathbf{\Bar{U}}_{i, j},  \mathbf{M}_{i, j}, \mathbf{Y}_{i, j},  \mathbf{\Sigma}_{i, j}\}_{i=0, j=0}^{r-1, q-1}  \big)$ 
and $\mathcal{S}^{2}_\mathrm{step} = (\boldsymbol{\Tilde{\lambda}}, \boldsymbol{\beta}, \mathbf{p}_N)$ to solve problem \eqref{eq:stepBilinear} using BCD. 
The $\mathcal{S}_\mathrm{step}^{1}$ update step is given as follows:
\begin{align}\label{eq:stepBilinearSubProblem1}
    \min_{\mathcal{S}_{\mathrm{step}}^{1}} & ~ \tr{ \mathbf{T}\transpose \boldsymbol{\beta}_k }~~\text{s.t.}~ ~ \eqref{eq:nlp-soft-GMM-density-steering-constr-1}, \eqref{eq:softBilinear-constr4}, \eqref{eq:softBilinear-constr5}, \eqref{eq:stepBilinear-constr0}\text{-}\eqref{eq:stepBilinear-constr3}
\end{align}
After problem \eqref{eq:stepBilinearSubProblem1} is solved for $\mathcal{S}^{1}_\mathrm{step}$, the update step for $\mathcal{S}_\mathrm{step}^{2}$ with fixed values of $\mathcal{S}_\mathrm{step}^{1}$ is given as follows:
\begin{align}\label{eq:stepBilinearSubProblem2}
    \min_{\mathcal{S}_{\mathrm{step}}^{2}} & ~ \tr{\mathbf{T}_{k+1}\transpose \boldsymbol{\beta}} ~~ \text{s.t.} ~ \eqref{eq:softBilinear-constr0}, \eqref{eq:softBilinear-constr1}~\text{hold}.
\end{align}
In all three solution procedures for problems \eqref{eq:softBilinear}, \eqref{eq:totalBilinear} and \eqref{eq:stepBilinear} described so far,  one can use the following convergence criterion: $\left((f_{k-1} - f_{k})/f_{k} \leq \epsilon\right) ~ \vee \left( f_{k} \leq \epsilon \right),$ where $f_k$ is the value of the objective function at the $k$th step of the BCD.

Our BCD scheme for solving problems \eqref{eq:softBilinear}, \eqref{eq:totalBilinear}, and \eqref{eq:stepBilinear} relies on the feasibility of their associated convex sub-problems. 
While the sub-problems in \eqref{eq:softBilinearSubProblem1} and \eqref{eq:softBilinearSubProblem2} are always feasible for all fixed values of $\mathcal{S}_\mathrm{soft}^{1}$ and $\mathcal{S}_\mathrm{soft}^1$, respectively, this is not the case for the sub-problems in \eqref{eq:totalBilinearSubProblem1} and \eqref{eq:totalBilinearSubProblem2} needed to solve \eqref{eq:totalBilinear}. 
Similarly, sub-problems \eqref{eq:stepBilinearSubProblem1} and \eqref{eq:stepBilinearSubProblem2} may not be feasible for all fixed $\mathcal{S}_{\mathrm{step}}^{2}$ and $\mathcal{S}_\mathrm{step}^1$, respectively.

To avoid artificial infeasibility resulting from fixed variables in the BCD scheme, we initially solve certain auxiliary problems to find an initial feasible solution. 
These auxiliary problems are created by relaxing constraints causing infeasibility, introducing a slack variable $s_\mathrm{slack}$, and minimizing this slack variable instead of the objective function. 
Subsequently, BCD iterations proceed until a feasible solution is obtained. 
Specifically, we replace the right-hand sides of \eqref{eq:totalBilinearSubProblem1-constr} and \eqref{eq:totalBilinearSubProblem2-constr} with $s_\mathrm{slack}$ and substitute $s_\mathrm{slack}$ for the objective functions in \eqref{eq:totalBilinearSubProblem1} and \eqref{eq:totalBilinearSubProblem2}. 
A feasible solution is found when the optimal value of $s_\mathrm{slack}$ is less than $\kappa$. 
Analogously, auxiliary problems for \eqref{eq:stepBilinearSubProblem1} and \eqref{eq:stepBilinearSubProblem2} are established by replacing the right-hand side of constraint $\eqref{eq:stepBilinear-constr3}$ with $s_\mathrm{slack}$ in problems \eqref{eq:stepBilinearSubProblem1} and \eqref{eq:stepBilinearSubProblem2}. 
The feasible point is determined when the optimal value of $s_\mathrm{slack}$ is less than $1$.

Artificial infeasibility can also be observed in problems \eqref{eq:totalBilinear} and \eqref{eq:stepBilinear}, if the number of terminal GMM components, $q$, is less than the number of initial GMM components, $r$. To observe this, consider an instance of the problem in \eqref{eq:totalBilinear} with $r=2$ and $q=1$. In this case, both components of the initial state GMM must be steered to the same state mean $\mu_0^N$ and state covariance matrix $\Sigma_0^N$ and thus,
\eqref{eq:totalBilinear-constr} can be written as:
\begin{align}\label{eq:example-constr-infeasible}
 C_{0, 0} \Tilde{\lambda}_{0, 0} + C_{1, 0} \Tilde{\lambda}_{1, 0} \leq \kappa
\end{align}
where $C_{0,0} = J_\mathrm{mean}^{\star} (\mu_0^0, \mu_0^N) + J_\mathrm{cov}^{\star} (\Sigma_0^0, \Sigma_0^N)$, $C_{1, 0} := J_\mathrm{mean}^{\star}(\mu_1^0, \mu_0^N) + J_\mathrm{cov}^{\star}(\Sigma_1^0, \Sigma_0^N)$ at optimality.
Moreover,  $\Tilde{\lambda}_{0, 0} = p_0^0$, $\Tilde{\lambda}_{1, 0} = p_0^1$ due to the constraints in \eqref{eq:softBilinear-constr0}.
Thus, the constraint in \eqref{eq:example-constr-infeasible} is rewritten as:
\begin{align}
    & p_0^0 \big(J_\mathrm{mean}^{\star} (\mu_0^0, \mu_0^N) + J_\mathrm{cov}^{\star} (\Sigma_0^0, \Sigma_0^N) \big) \nonumber \\
    & + p_1^0 \big(J_\mathrm{mean}^{\star}(\mu_1^0, \mu_0^N) + J_\mathrm{cov}^{\star}(\Sigma_1^0, \Sigma_0^N) \big) \leq \kappa
\end{align}
Thus, if $\lVert \mu_0^0 - \mu_1^0 \rVert$ is large enough, there may not be a pair $(\mu_0^N, \Sigma_0^N)$ such that \eqref{eq:example-constr-infeasible} holds. 

On the other hand, if $q \geq r$ the existence of the randomized policies in the form of \eqref{eq:gmm-policy-definition} is guaranteed under mild conditions. 
The following proposition formally states the conditions for feasibility of Problems \ref{prob:total-cost-constrained-GMM-density-steering} and \ref{prob:step-cost-constrained-GMM-density-steering} over policies $\pi \in \Pi_r$.
\begin{proposition}\label{prop:gmm-steering-feasibility}
    The problem given in \eqref{eq:totalBilinear} is feasible, if $q \geq r$ and there exist $\curly{ \mathbf{\Tilde{U}}_{i} \in \R{mN}, \mathbf{\Tilde{L}}_{i} \in \R{mN \times n} }_{i=0}^{r-1}$ such that
    \begin{align}\label{eq:prop-feasibility-eq1}
        J_\mathrm{mean}( \mathbf{\Tilde{U}}_{i}; \mu_i^0) + J_\mathrm{cov}(\mathbf{\Tilde{L}}_{i}; \Sigma_i^0) \leq \kappa,
    \end{align}
    for all $i \in \curly{0, \dots, r-1}$, where $J_\mathrm{mean}(\mathbf{\Tilde{U}}_{i, j}; \mu_i^0)$ and $J_\mathrm{cov}(\mathbf{\Tilde{L}}_{i, j}, \Sigma_i^0)$ are defined in \eqref{eq:Mean-Steering-Problem} and $\eqref{eq:Cov-Steering-Problem}$, respectively.
    Furthermore, the problem in \eqref{eq:stepBilinear} is feasible if $q \geq r$ and there exist $\curly{\mathbf{\Tilde{U}}_{i}, \mathbf{L}_{i}}_{i=0}^{r-1}$ such that
    \begin{align}\label{eq:prop-feasibility-eq2}
        h_{k}( \mathbf{\Tilde{U}}_{i}, \mathbf{\Tilde{L}}_{i}; \mu_i^0, \Sigma_i^0) \leq \kappa_k 
    \end{align}
    for all $(i, k) \in \curly{0, r-1} \times \curly{0, N-1}$, where $h_k(\mathbf{\Tilde{U}}_{i}, \mathbf{\Tilde{L}}_{i}; \mu_i^0, \Sigma_i^0)$ is defined as in \eqref{eq:h_func_def}.
\end{proposition}
\begin{proof}
The proof is constructive and is done by directly setting the values of the decision variables in \eqref{eq:totalBilinear} and \eqref{eq:stepBilinear}. 
First, we prove the feasibility of the problem in \eqref{eq:totalBilinear}. 
Let $\Tilde{\lambda}_{i, j} = p_i^0$ for all $i \in \curly{0, \dots, r-1}$ with $i = j$, and $\Tilde{\lambda}_{i, j} = 0$, otherwise. Then, by setting $\mathbf{p}_N = \Tilde{\lambda}\transpose \mathbf{1}_r$, the constraint in \eqref{eq:softBilinear-constr0} is satisfied. Note that there exists $\boldsymbol{\beta}$ such that the constraint in \eqref{eq:softBilinear-constr1} holds for any $\mathbf{p}_0$ and $\mathbf{p}_N$. 
Then, let us define $\mathbf{G}_{i} = \Phi_{N:0} + \mathbf{B_N} \mathbf{\Tilde{L}}_i$, $\Sigma^{N}_i = \mathbf{G}_i \Sigma_i^0 \mathbf{G}_i\transpose$, and $\mu_i^N = \Phi_{N:0} \mu_i^0 + \mathbf{B_N} \mathbf{\Tilde{U}}_{i}$.
Now, we set $\mathbf{C}_{i, j} = J_\mathrm{mean}( \mathbf{\Tilde{U}}_{i}; \mu_i^0) + J_\mathrm{cov}(\mathbf{\Tilde{L}}_{i}; \Sigma_i^0)$ for all $i \in \curly{0, \dots, r-1}$ with $i=j$.
Since the optimal value of the right hand side of \eqref{eq:softBilinear-constr2} over $L_{i, j}$ subject to \eqref{eq:softBilinear-constr3} is equal $J_\mathrm{mean}^\star(\mu_i^0, \mu_i^N) + J_\mathrm{cov}^{\star}(\Sigma_i^0, \Sigma_i^N)$, $\mathbf{C}_{i, j}$ satisfy \eqref{eq:softBilinear-constr2} for all $i=j$. 
Furthermore, by setting the rest of $\mathbf{C}_{i, j} =  J_\mathrm{mean}^\star (\mu_i^0, \mu_j^N) + \tr{\Theta_6 \Sigma_i^0} + \tr{\Theta_7 \Sigma_j^N}$ and $L_{i, j} = \bm{0}$, \eqref{eq:softBilinear-constr2} and \eqref{eq:softBilinear-constr3} will be satisfied.
The constraints in \eqref{eq:softBilinear-constr4} and \eqref{eq:softBilinear-constr5} are trivially satisfied, if $Y_{i, j} = 0$ and $\mathbf{T}_{i, j} = \lVert \mu_j^d - \mu_i^N \rVert_2^2 + \tr{\Sigma_i^N + \Sigma_j^d}$.
Finally, the left hand side of the constraint in \eqref{eq:totalBilinear-constr} is equal to $\sum_{i=0}^{r-1} \Tilde{\lambda}_{i, i} C_{i, i}$ since $\Tilde{\lambda}_{i, j} = 0$ by definition for $i \neq j$. 
Since $C_{i, i} \leq \kappa$ from \eqref{eq:prop-feasibility-eq1} and $\Tilde{\lambda}_{i, i} = p_i^0$, \eqref{eq:totalBilinear-constr} holds. 
The proof of the feasibility of \eqref{eq:stepBilinear} follows similarly. 
$\Tilde{\lambda}_{i, j}, \mathbf{T}_{i, j}, \mathbf{p}_N, \boldsymbol{\beta}$ are set using the same method, thus \eqref{eq:softBilinear-constr0}, \eqref{eq:softBilinear-constr1}, \eqref{eq:softBilinear-constr4} and \eqref{eq:softBilinear-constr5} hold. 
Furthermore, we set $\Sigma_i^N$ for all $i \in \{0, \dots, r-1\}$ the same way. 
For all $i \in \curly{0, \dots, r-1}$ with $i = j$, we let $\mathbf{Y}_{i, j} = \mathbf{\Tilde{L}}_{i} \Sigma_i^0$ and $\mathbf{M}_{i, j} = \mathbf{\Tilde{L}}_{i} \Sigma_i^0 \mathbf{\Tilde{L}}_{i}\transpose$, and thus \eqref{eq:stepBilinear-constr0} and \eqref{eq:stepBilinear-constr2} hold for all $i=j$. 
For all other pairs $(i, j)$, $\mathbf{M}_{i, j}$, $\mathbf{\Tilde{L}}_{i, j}$ can be chosen arbitrarily such that \eqref{eq:stepBilinear-constr0} and \eqref{eq:stepBilinear-constr2} hold.
(This can be done since we assumed that the system in \eqref{eq:linear-system-eq} is controllable.) 
Finally, the constraint in \eqref{eq:stepBilinear-constr3} reduces to $\sum_{i=0}^{r-1} p_i^0 h_k (\mathbf{\Tilde{U}}_i, \mathbf{\Tilde{L}}_i; \mu_i^0, \Sigma_i^0) \leq 1$ for all $k$. 
Thus, \eqref{eq:stepBilinear-constr3} holds for all $k$ due to \eqref{eq:prop-feasibility-eq2}. 
\end{proof}

\begin{remark}
    Proposition \ref{prop:gmm-steering-feasibility} asserts that if there are affine feedback control policy parameters that render the linear quadratic constrained optimal control problem feasible for each initial GMM mixture component, then the constrained GMM mixture steering problems will also be feasible.
\end{remark}

\section{Error bounds on GMM Approximations}\label{s:error}
Throughout the paper, we focus on probability distributions represented as GMMs. 
One appealing aspect of GMMs is their universal approximation property \cite[Chapter 3]{b:stergiopoulos2017advSignalProHandbook}, which asserts that any smooth PDF can be approximated to any desired level of precision by a GMM with a sufficiently high number of mixture components. 
In this section, we investigate how the error in the GMM approximation of the initial state distribution evolves over time when the policy defined in \eqref{eq:gmm-policy-definition} is applied to the linear system described in \eqref{eq:linear-system-eq}.

First, let $x_0, x_0^a$, $x_N$ and $x_N^a$ be random variables representing the true and approximated initial states and the true and approximated terminal states, respectively.
In addition, the associated PDFs evaluated at an arbitrary point $x' \in \R{n}$ are denoted as $\P[x_0]{x'}$, $\P[x_0^a]{x'}$, $\P[x_N]{x'}$ and $\P[x_N^a]{x'}$, respectively. 
Moreover, the approximation error for the initial and terminal PDFs are denoted as $e_0(x')$, $e_N(x')$, such that
\begin{align}
    \P[x_k]{x'} & := \P[x_k^a]{x'}  + e_k(x'), ~~k \in\{ 0, N\}. \label{eq:pdf-error-eq12} 
\end{align}
To analyze the terminal state distribution approximation error $e_N(x')$, we need to derive the relationship between $e_N(x')$ and $e_0(x')$.
First, we plug the expression of $\P[x_0]{x'}$ in \eqref{eq:pdf-error-eq12} into the expression of $\P[x_N]{x'}$ in \eqref{eq:pdf-x-n} to obtain: 
\begin{align}
    \P[x_N]{x'} & = \int_{\R{n}} \int_{\R{m N}}  \P[x_N | x_0 = \Hat{x}_0, U = \Hat{U}]{x'} ~ \P[U| x_0=\Hat{x}]{\Hat{U}}  \nonumber \\
    & \qquad \qquad \times \big( \P[x^a_0]{\Hat{x}_0} + e_0(\Hat{x}_0) \big)  \, \mathrm{d}\Hat{U} \, \mathrm{d}\Hat{x}_0, \label{eq:pdf-x-n-error-eq1}\\
    & = \P[x_N^a]{x'} + \int_{\R{n}}\int_{\R{mN}} \P[x_N | x_0 = \Hat{x}_0, U = \Hat{U}]{x'} \nonumber \\
    & \qquad \times \P[U| x_0=\Hat{x}]{\Hat{U}} e_0 (\Hat{x}_0) ~ \mathrm{d}\Hat{U} \mathrm{d} \Hat{x}_0, \label{eq:pdf-x-n-error-eq2}
\end{align}
The first term in \eqref{eq:pdf-error-eq12} is equal to $\P[x_N^a]{x'}$ in view of Proposition \ref{prop:gmm-steering-proposition}. 
Then, it follows from the identity given in \eqref{eq:pdf-error-eq12} that $e_N(x') = \P[x_N]{x'} - \P[x_N^a]{x'}$ and thus,
\begin{align}\label{eq:en-ito-e0}
    e_N(x') & = \int_{\R{n}} \int_{\R{mN}}  \P[x_N | x_0 = \Hat{x}_0, U = \Hat{U}]{x'} \nonumber \\
    & \qquad \qquad \times \P[U| x_0=\Hat{x}]{\Hat{U}} e_0 (\Hat{x}_0) \mathrm{d}\Hat{x}_0 \mathrm{d}\Hat{U}.
\end{align}

To further simplify the expression of $e_N(x')$ given in \eqref{eq:en-ito-e0}, we plug the expressions of $\P[x_N | x_0 = \Hat{x}_0, U = \Hat{U}]{x'}$ and $\P[U| x_0=\Hat{x}]{\Hat{U}} $ given in, respectively, \eqref{eq:conditional-xn-given-x0-u} and \eqref{eq:conditional-U-given-x0}, into \eqref{eq:en-ito-e0}.
Then, similarly to the proof of Proposition \ref{prop:gmm-steering-proposition}, it can be shown that $e_N(x')$ can be rewritten as:
\begin{align*}
    e_N(x') := \sum_{i, j} \int_{\R{n}} \gamma_{i, j}(\Hat{x}_0) \delta (x' = \mathbf{H}_{i, j} \Hat{x}_0 - h_{i, j}) 
    e_0(\Hat{x}_0) \mathrm{d} \Hat{x}_0, 
\end{align*}
where $\mathbf{H}_{i, j} = \Phi_{N:0} + \mathbf{B_N} \mathbf{L}_{i, j}$, $h_{i, j} = \mathbf{B_N} (\mathbf{L}_{i, j} \mu_i^0 - \mathbf{\Bar{U}}_{i, j})$ and $\gamma(\Hat{x}_0)$ is given as in \eqref{eq:gamma-def}. 
Applying the variable transformation $z_{i, j} := \mathbf{H}_{i, j} \Hat{x}_0 - h_{i, j}$ for all $(i, j)$, expanding $\gamma(\Hat{x}_0)$ and using the standard property of Dirac delta function, we obtain the following expression for $e_N(x')$:
\begin{align}\label{eq:en-ito-e0-3}
    e_N(x') := \sum_{i=0, j=0}^{r-1, q-1} p_i^0 \lambda_{i, j} \frac{e_0( \mathbf{H}_{i, j}^{-1} (x' + h_{i, j}) )}{\P[x_0^a]{\mathbf{H}_{i, j}^{-1} (x' + h_{i, j})}} \nonumber \\
    \times \P[\mathcal{N}]{x'; \mu_j^N, \Sigma_j^N}.
\end{align}
For notational brevity, we can write \eqref{eq:en-ito-e0-3} as $e_N(x') = \sum_{i=0, j=0}^{r-1, q-1} g_{i, j}(x')$ where each $g_{i, j}(x')$ is a term in the summation in \eqref{eq:en-ito-e0-3}. 
The decomposed expression of $e_N(x')$ given in \eqref{eq:en-ito-e0-3} will be used in the subsequent analysis.

To find a bound, we first assume that the absolute value of the ratio of the initial error and the approximated GMM mixture PDF is upper bounded by some $\epsilon > 0$. 
Then, we show that this upper bound holds for the terminal state error term $e_N(x')$. 
The following proposition formally states the previous claim.
\begin{proposition}\label{prop:error-bound1}
    If the GMM approximation error of the PDF of the initial state, $e_0(x')$, satisfy $\left| {e_0(x')}/{\P[x_0^a]{x'}} \right| \leq \epsilon$, $\forall x'$, then the GMM approximation error of the PDF of the terminal state, $e_N(x')$, will also satisfy $\left| {e_N(x')}/{\P[x_N^a]{x'}} \right| \leq \epsilon$, $\forall x'$.
\end{proposition}
\begin{proof}
Let $\Bar{g}_{i, j}(x') := p_i^0 \lambda_{i, j} \P[\mathcal{N}]{x'; \mu_j^N, \Sigma_j^N} \epsilon$ and $\underline{g}_{i, j} := - \Bar{g}_{i, j}(x')$. 
Observe that $\underline{g}_{i, j}(x') \leq g_{i, j}(x') \leq \Bar{g}_{i, j}(x')$, $\forall x' \in \R{n}$ and $\forall (i, j)$ pairs since $\left| {e_0(x')}/{\P[x_0^a]{x'}} \right| \leq \epsilon$, $\forall x' \in \R{n}$.
Now, summing the terms $\Bar{g}_{i, j}(x')$ over all $(i, j)$, we obtain:
\begin{align*}
   \sum_{i=0, j=0}^{r-1, q-1} \Bar{g}_{i, j}(x') & = \sum_{j=0}^{q-1} p_j^{N} \P[\mathcal{N}]{x'; \mu_j^N, \Sigma_j^N} \epsilon 
    = \P[x_N^a]{x'} \epsilon 
\end{align*}
Similarly, we have $\sum_{i, j} \underline{g}_{i, j}(x') = - \P[x_N^a]{x'} \epsilon$. 
Thus, we have that $-\P[x_N^a]{x'} \epsilon \leq e_N(x') \leq \P[x_N^a]{x'} \epsilon$.
Furthermore, dividing both sides of the latter inequality by $\P[x_N^a]{x'}$, we obtain $-\epsilon \leq e_N(x')/\P[x_N^a]{x'} \leq \epsilon$ which completes the proof.
\end{proof}

Next, we assume that $|e_0(x')| \leq \epsilon$, $\forall x'$, where $\epsilon_0 > 0$, and derive an upper bound for $|e_N(x')|$ in terms of $\epsilon_0$ for all $x'$. 
\begin{proposition}\label{prop:error-bound2}
    If the GMM approximation error of the PDF of the initial state distribution $e_0(x')$ satisfy $\left| e_0(x') \right| \leq \epsilon_0$, then the GMM approximation error of the terminal state distribution $e_N(x')$ satisfies: 
    \begin{align}\label{eq:eps-n-def}
        |e_N(x')| \leq \sum_{i=0, j=0}^{r-1, q-1} \lambda_{i, j} \sqrt{\det (\Sigma_i^0) / \det (\Sigma_j^N)  }  \epsilon_0
    \end{align}
\end{proposition}
\begin{proof}
First, we observe that $e_N(x')$, which is given in \eqref{eq:en-ito-e0-3}, can be written alternatively as follows:
\begin{align}\label{eq:en-ito-e0-4}
     e_N(x') = \sum_{i=0, j=0}^{r-1, q-1} \lambda_{i, j} \frac{p_i^0 \P[\mathcal{N}]{z_{i, j}(x'); \mu_i^0, \Sigma_i^0}}{\P[x_0^a]{z_{i, j}(x')}} \nonumber \\
     \times e_0(z_{i, j}(x')) \det (\mathbf{H}_{i, j}^{-1}),
\end{align}
where $z_{i, j}(x') = \mathbf{H}_{i, j}^{-1} (x' + h_{i, j})$.
Observe that $e_N(x')$ can be decomposed as $\sum_{i=0, j=0}^{r-1, q-1} g_{i, j}(x')$ where
\begin{align}
    g_{i, j}(x') := \lambda_{i, j} \frac{p_i^0 \P[\mathcal{N}]{z_{i, j}(x'); \mu_i^0, \Sigma_i^0}}{\P[x_0^a]{z_{i, j}(x')}} \frac{e_0(z_{i, j}(x'))}{\det (\mathbf{H}_{i, j})}. 
\end{align}
We observe that $\frac{p_i^0 \P[\mathcal{N}]{z_{i, j}(x'); \mu_i^0, \Sigma_i^0}}{\P[x_0^a]{z_{i, j}(x')}} \in [0,1]$ since the numerator is a component of the Gaussian mixture in the denominator. 
Moreover, we have $|e_0(z_{i, j}(x'))| \leq \epsilon_0$ from the initial assumption of Proposition \ref{prop:error-bound2}. Thus, $\underline{g}_{i, j}(x') \leq g_{i, j}(x') \leq \Bar{g}_{i, j}(x')$ where $\Bar{g}_{i, j} := \lambda_{i, j} \epsilon_0 \det (\mathbf{H}_{i, j}^{-1})$ and $\underline{g}_{i, j}(x') = - \Bar{g}_{i, j}(x')$.
Summing $g_{i, j}(x')$, $\forall (i, j)$, we obtain that $\sum_{i, j} \underline{g}_{i, j}(x') \leq e_N(x') \leq \sum_{i, j}\Bar{g}_{i, j}$; thus $|e_N(x')| \leq \sum_{i, j} \Bar{g}_{i, j}(x')$.
In addition, $\Sigma_j^N = \mathbf{H}_{i, j} \Sigma_i^0 \mathbf{H}_{i, j}\transpose$ $\forall (i, j)$. 
We obtain $\det (\mathbf{H}_{i, j})^{2} = \det(\Sigma_j^N)/\det(\Sigma_i^0)$ and thus, $\det(\mathbf{H}_{i, j}^{-1}) = \sqrt{\det(\Sigma_i^0)/\det(\Sigma_j^N)}$, which completes the proof.
\end{proof}

\begin{remark}
    Proposition \ref{prop:error-bound1} guarantees that the ratio of the approximation error magnitude to the PDF of the GMM evaluated at $x' \in \R{n}$ remains constant for all $x'$ after the policy in \eqref{eq:gmm-policy-definition} is applied. 
    In contrast, Proposition \ref{prop:error-bound2} provides an expression for the absolute approximation error of the PDF of the terminal state GMM in terms of the absolute value of the initial state approximation error.
\end{remark}

\section{Numerical Simulations}\label{s:numerical-simulations}
In this section, we present the results of numerical experiments which were conducted on a Mac M1 with 8 GB of RAM. The first numerical experiment (Section \ref{s:num-exp-prob1}) focuses on Problem \ref{prob:hard-GMM-density-steering}, whereas 
the second set of numerical experiments (Section \ref{s:num-exp-prob23}) correspond to Problems \ref{prob:soft-GMM-density-steering}-\ref{prob:total-cost-constrained-GMM-density-steering}. 
Lastly, the third set of numerical experiments (Section \ref{s:num-exp-prob4}) correspond to Problem \ref{prob:step-cost-constrained-GMM-density-steering}.

\subsection{2D Single Integrator - Problem \ref{prob:hard-GMM-density-steering}}\label{s:num-exp-prob1}
In this experiment, the system dynamics matrices are taken as $A_k = I_2$, $B_k = \Delta t I_2$, $\Delta t = 1.0$, $x_k, u_k \in \R{2}$ for all $k \in \curly{0, \dots, N}$ where $N=10$. In addition, $R_k = I_2$, $Q_k = \bm{0}$ and $x_k' = \bm{0}$. The initial state follows a uniform distribution over the set $\mathcal{S}_{\mathrm{init}} := \curly{ p_x, p_y \in \R{} ~|~ (p_x, p_y) \in [-1, 8] \times [-1, 4] }$. The terminal state distribution is a uniform distribution over a `UT' shape set. The actual initial and desired distributions are illustrated in Fig.~\ref{fig:init-des-actual-ex1}, whereas the approximated initial and desired distributions are shown in Fig.~\ref{fig:init-des-approx-ex1}.

\begin{figure}[ht]
    \centering
    \begin{subfigure}{0.45\linewidth}
        \centering
        \resizebox{\linewidth}{!}{
\begin{tikzpicture}

\definecolor{darkgray176}{RGB}{176,176,176}

\begin{axis}[
axis equal image,
tick align=outside,
tick pos=left,
x grid style={darkgray176},
xmin=-5, xmax=10,
xtick style={color=black},
y grid style={darkgray176},
ymin=-2, ymax=5,
ytick style={color=black}, 
ticklabel style = {font=\Large} 
]
\addplot graphics [includegraphics cmd=\pgfimage,xmin=-5, xmax=10, ymin=-2, ymax=5] {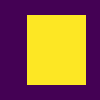};
\end{axis}

\end{tikzpicture}
        }
        \subcaption{Initial}\label{fig:initial_dist_actual_ex1}
    \end{subfigure}
    \hfill
    \begin{subfigure}{0.45\linewidth}
        \centering
        \resizebox{\linewidth}{!}{
            \begin{tikzpicture}

\definecolor{darkgray176}{RGB}{176,176,176}

\begin{axis}[
axis equal image,
tick align=outside,
tick pos=left,
x grid style={darkgray176},
xmin=-5, xmax=10,
xtick style={color=black},
y grid style={darkgray176},
ymin=-2, ymax=5,
ytick style={color=black}, 
ticklabel style={font=\Large}
]
\addplot graphics [includegraphics cmd=\pgfimage,xmin=-5, xmax=10, ymin=-2, ymax=5] {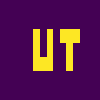};
\end{axis}

\end{tikzpicture}
        }
        \centering
        \subcaption{Desired}\label{fig:desired_dist_actual_ex1}
    \end{subfigure}
    \caption{Actual densities.}
    \label{fig:init-des-actual-ex1}
\end{figure}

\begin{figure}[ht]
    \centering
    \begin{subfigure}{0.45\linewidth}
        \centering
        \resizebox{\linewidth}{!}{
\begin{tikzpicture}

\definecolor{darkgray176}{RGB}{176,176,176}

\begin{axis}[
axis equal image,
tick align=outside,
tick pos=left,
x grid style={darkgray176},
xmin=-5, xmax=10,
xtick style={color=black},
y grid style={darkgray176},
ymin=-2, ymax=5,
ytick style={color=black}, 
ticklabel style = {font=\Large} 
]
\addplot graphics [includegraphics cmd=\pgfimage,xmin=-5, xmax=10, ymin=-2, ymax=5] {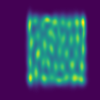};
\end{axis}

\end{tikzpicture}
        }
        \subcaption{Initial}\label{fig:initial_dist_approx_ex1}
    \end{subfigure}
    \hfill
    \begin{subfigure}{0.45\linewidth}
        \centering
        \resizebox{\linewidth}{!}{
            \begin{tikzpicture}

\definecolor{darkgray176}{RGB}{176,176,176}

\begin{axis}[
axis equal image,
tick align=outside,
tick pos=left,
x grid style={darkgray176},
xmin=-5, xmax=10,
xtick style={color=black},
y grid style={darkgray176},
ymin=-2, ymax=5,
ytick style={color=black}, 
ticklabel style = {font=\Large} 
]
\addplot graphics [includegraphics cmd=\pgfimage,xmin=-5, xmax=10, ymin=-2, ymax=5] {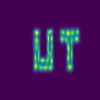};
\end{axis}

\end{tikzpicture}
        }
        \centering
        \subcaption{Desired}\label{fig:desired_dist_approx_ex1}
    \end{subfigure}
    \caption{Approximated densities.}
    \label{fig:init-des-approx-ex1}
\end{figure}

For the initial state PDF approximation, the number of GMM components is chosen as 40, while for the desired state PDF as 30 (Figs \ref{fig:init-des-actual-ex1} and \ref{fig:init-des-approx-ex1}). 
The evolution of the PDF of $x_k$ under the policy given in \eqref{eq:gmm-policy-definition}, obtained by solving the LP in \eqref{eq:hardLP}, is shown for time steps $\{ {2, 5, 8, 10} \}$ in Figure \ref{fig:evolution-state-density-ex1}, from left to right.
For both the initial GMM approximation using the EM algorithm and the illustration of the state PDF evolution, 10,000 samples are generated. 

In Fig.~\ref{fig:approx_init_term_dist_vs_r}, the PDF of the initial state distribution is approximated by a GMM whose number of Gaussian components, $r$, varies. Specifically, from left to right, $r\in\{2, 5, 15, 30\}$. 
The top row (Figs.~\ref{subfig:approx_init_vs_r2}–\ref{subfig:approx_init_vs_r30}) illustrates the GMM approximations of the PDF of the initial state shown in Fig.~\ref{fig:initial_dist_actual_ex1} for different values of $r$. The bottom row (Figs.~\ref{subfig:term_dist_vs_r2}–\ref{subfig:term_dist_vs_r30}) shows the terminal PDFs after the optimal policies are applied to the system.

We also investigated the effect of the number of components in the GMM approximation of the initial state distribution on the optimal value of 
Problem \ref{prob:hard-GMM-density-steering} and the computation time.
As observed in Figure \ref{subfig:opt_val_vs_num_gmm}, the optimal value of the problem decreases until $r=30$, since the terminal distribution is also approximated with a GMM with 30 components. 
On the other hand, the computation time increases linearly, which is expected since the problem size increases linearly with $r$ as shown in Figure \ref{subfig:time_vs_num_gmm}. 
Notably, solving the LP in \eqref{eq:hardLP} takes only a fraction of a second by the solver; however, building the problem and evaluating each $\mathbf{C}_{i, j}$ takes most of the time.

\begin{figure*}[ht]
    \centering
    \begin{subfigure}{0.23\linewidth}
        \centering
        \resizebox{\linewidth}{!}{
            \begin{tikzpicture}

\definecolor{darkgray176}{RGB}{176,176,176}

\begin{axis}[
axis equal image,
tick align=outside,
tick pos=left,
x grid style={darkgray176},
xmin=-5, xmax=10,
xtick style={color=black},
y grid style={darkgray176},
ymin=-2, ymax=5,
ytick style={color=black}, 
ticklabel style={font=\Large}
]
\addplot graphics [includegraphics cmd=\pgfimage,xmin=-5, xmax=10, ymin=-2, ymax=5] {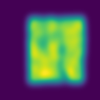};
\end{axis}

\end{tikzpicture}
        }
        \subcaption{$t=2$}
    \end{subfigure}
    \hfill
    \begin{subfigure}{0.23\linewidth}
        \centering
        \resizebox{\linewidth}{!}{
            \begin{tikzpicture}

\definecolor{darkgray176}{RGB}{176,176,176}

\begin{axis}[
axis equal image,
tick align=outside,
tick pos=left,
x grid style={darkgray176},
xmin=-5, xmax=10,
xtick style={color=black},
y grid style={darkgray176},
ymin=-2, ymax=5,
ytick style={color=black}, 
ticklabel style={font=\Large}
]
\addplot graphics [includegraphics cmd=\pgfimage,xmin=-5, xmax=10, ymin=-2, ymax=5] {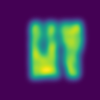};
\end{axis}

\end{tikzpicture}
        }
        \subcaption{$t=5$}
    \end{subfigure}
    \hfill
    \begin{subfigure}{0.23\linewidth}
        \centering
        \resizebox{\linewidth}{!}{
            \begin{tikzpicture}

\definecolor{darkgray176}{RGB}{176,176,176}

\begin{axis}[
axis equal image,
tick align=outside,
tick pos=left,
x grid style={darkgray176},
xmin=-5, xmax=10,
xtick style={color=black},
y grid style={darkgray176},
ymin=-2, ymax=5,
ytick style={color=black}, 
ticklabel style={font=\Large}
]
\addplot graphics [includegraphics cmd=\pgfimage,xmin=-5, xmax=10, ymin=-2, ymax=5] {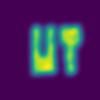};
\end{axis}

\end{tikzpicture}
        }
        \subcaption{$t=8$}
    \end{subfigure}
    \hfill
    \begin{subfigure}{0.23\linewidth}
        \centering
        \resizebox{\linewidth}{!}{
            \begin{tikzpicture}

\definecolor{darkgray176}{RGB}{176,176,176}

\begin{axis}[
axis equal image,
tick align=outside,
tick pos=left,
x grid style={darkgray176},
xmin=-5, xmax=10,
xtick style={color=black},
y grid style={darkgray176},
ymin=-2, ymax=5,
ytick style={color=black}, 
ticklabel style={font=\Large}
]
\addplot graphics [includegraphics cmd=\pgfimage,xmin=-5, xmax=10, ymin=-2, ymax=5] {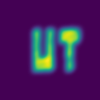};
\end{axis}

\end{tikzpicture}
        }
        \subcaption{$t=10$}
    \end{subfigure}
    \caption{Evolution of state density (PDF).}
    \label{fig:evolution-state-density-ex1}
\end{figure*}

\begin{figure*}
    \centering
    \begin{subfigure}{0.23\linewidth}
        \centering
        \resizebox{\linewidth}{!}{
            \begin{tikzpicture}

\definecolor{darkgray176}{RGB}{176,176,176}

\begin{axis}[
axis equal image,
tick align=outside,
tick pos=left,
x grid style={darkgray176},
xmin=-5, xmax=10,
xtick style={color=black},
y grid style={darkgray176},
ymin=-2, ymax=5,
ytick style={color=black}, 
ticklabel style = {font=\Large} 
]
\addplot graphics [includegraphics cmd=\pgfimage,xmin=-5, xmax=10, ymin=-2, ymax=5] {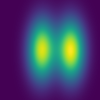};
\end{axis}

\end{tikzpicture}
        }
        \subcaption{$r = 2$}
        \label{subfig:approx_init_vs_r2}
    \end{subfigure}
    \hfill
    \begin{subfigure}{0.23\linewidth}
        \centering
        \resizebox{\linewidth}{!}{
            \begin{tikzpicture}

\definecolor{darkgray176}{RGB}{176,176,176}

\begin{axis}[
axis equal image,
tick align=outside,
tick pos=left,
x grid style={darkgray176},
xmin=-5, xmax=10,
xtick style={color=black},
y grid style={darkgray176},
ymin=-2, ymax=5,
ytick style={color=black}, 
ticklabel style = {font=\Large} 
]
\addplot graphics [includegraphics cmd=\pgfimage,xmin=-5, xmax=10, ymin=-2, ymax=5] {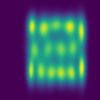};
\end{axis}

\end{tikzpicture}
        }
        \subcaption{$r = 5$}
        \label{subfig:approx_init_vs_r5}
    \end{subfigure}
    \hfill
    \begin{subfigure}{0.23\linewidth}
        \centering
        \resizebox{\linewidth}{!}{
            \begin{tikzpicture}

\definecolor{darkgray176}{RGB}{176,176,176}

\begin{axis}[
axis equal image,
tick align=outside,
tick pos=left,
x grid style={darkgray176},
xmin=-5, xmax=10,
xtick style={color=black},
y grid style={darkgray176},
ymin=-2, ymax=5,
ytick style={color=black}, 
ticklabel style = {font=\Large} 
]
\addplot graphics [includegraphics cmd=\pgfimage,xmin=-5, xmax=10, ymin=-2, ymax=5] {example1/approx_init_N15.png};
\end{axis}

\end{tikzpicture}
        }
        \subcaption{$r = 15$}
        \label{subfig:approx_init_vs_r15}
    \end{subfigure}
    \hfill
    \begin{subfigure}{0.23\linewidth}
        \centering
        \resizebox{\linewidth}{!}{
            \begin{tikzpicture}

\definecolor{darkgray176}{RGB}{176,176,176}

\begin{axis}[
axis equal image,
tick align=outside,
tick pos=left,
x grid style={darkgray176},
xmin=-5, xmax=10,
xtick style={color=black},
y grid style={darkgray176},
ymin=-2, ymax=5,
ytick style={color=black}, 
ticklabel style = {font=\Large} 
]
\addplot graphics [includegraphics cmd=\pgfimage,xmin=-5, xmax=10, ymin=-2, ymax=5] {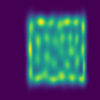};
\end{axis}

\end{tikzpicture}
        }
        \subcaption{$r = 30$}
        \label{subfig:approx_init_vs_r30}
    \end{subfigure}
    \hfill
    \begin{subfigure}{0.23\linewidth}
        \centering
        \resizebox{\linewidth}{!}{
            \begin{tikzpicture}

\definecolor{darkgray176}{RGB}{176,176,176}

\begin{axis}[
axis equal image,
tick align=outside,
tick pos=left,
x grid style={darkgray176},
xmin=-5, xmax=10,
xtick style={color=black},
y grid style={darkgray176},
ymin=-2, ymax=5,
ytick style={color=black}
]
\addplot graphics [includegraphics cmd=\pgfimage,xmin=-5, xmax=10, ymin=-2, ymax=5] {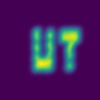};
\end{axis}

\end{tikzpicture}
        }
        \subcaption{$r = 2$}
        \label{subfig:term_dist_vs_r2}
    \end{subfigure}
    \hfill
    \begin{subfigure}{0.23\linewidth}
        \centering
        \resizebox{\linewidth}{!}{
            \begin{tikzpicture}

\definecolor{darkgray176}{RGB}{176,176,176}

\begin{axis}[
axis equal image,
tick align=outside,
tick pos=left,
x grid style={darkgray176},
xmin=-5, xmax=10,
xtick style={color=black},
y grid style={darkgray176},
ymin=-2, ymax=5,
ytick style={color=black}
]
\addplot graphics [includegraphics cmd=\pgfimage,xmin=-5, xmax=10, ymin=-2, ymax=5] {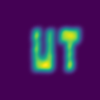};
\end{axis}

\end{tikzpicture}
        }
        \subcaption{$r = 5$}
        \label{subfig:term_dist_vs_r5}
    \end{subfigure}
    \hfill
    \begin{subfigure}{0.23\linewidth}
        \centering
        \resizebox{\linewidth}{!}{
            \begin{tikzpicture}

\definecolor{darkgray176}{RGB}{176,176,176}

\begin{axis}[
axis equal image,
tick align=outside,
tick pos=left,
x grid style={darkgray176},
xmin=-5, xmax=10,
xtick style={color=black},
y grid style={darkgray176},
ymin=-2, ymax=5,
ytick style={color=black}
]
\addplot graphics [includegraphics cmd=\pgfimage,xmin=-5, xmax=10, ymin=-2, ymax=5] {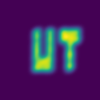};
\end{axis}

\end{tikzpicture}
        }
        \subcaption{$r = 15$}
        \label{subfig:term_dist_vs_r15}
    \end{subfigure}
    \hfill
    \begin{subfigure}{0.23\linewidth}
        \centering
        \resizebox{\linewidth}{!}{
            \begin{tikzpicture}

\definecolor{darkgray176}{RGB}{176,176,176}

\begin{axis}[
axis equal image,
tick align=outside,
tick pos=left,
x grid style={darkgray176},
xmin=-5, xmax=10,
xtick style={color=black},
y grid style={darkgray176},
ymin=-2, ymax=5,
ytick style={color=black}
]
\addplot graphics [includegraphics cmd=\pgfimage,xmin=-5, xmax=10, ymin=-2, ymax=5] {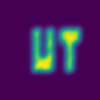};
\end{axis}

\end{tikzpicture}
        }
        \subcaption{$r = 30$}
        \label{subfig:term_dist_vs_r30}
    \end{subfigure}
    \caption{Approximated initial and actual terminal densities for varying $q$ (the number of terminal GMM components).}
    \label{fig:approx_init_term_dist_vs_r}
\end{figure*}

\begin{figure}[ht]
    \centering
    \begin{subfigure}{0.9\linewidth}
    \resizebox{\linewidth}{!}{
        \begin{tikzpicture}
    \begin{axis}[
        xlabel={Num. GMM Comp.},
        ylabel={Opt. Val.},
        grid=major,
        height=4cm,
        width=10cm,
        legend style={at={(0.5,-0.15)},anchor=north,legend columns=-1},
    ]
    \pgfplotstableread{
        x y
        5 0.25684451406133385
        10 0.2059592792914832
        15 0.14088232365860848
        20 0.13345818822882277
        25 0.11924824174212871
        30 0.11843972105695155
        35 0.11914294105126935
        40 0.11439128003306345
        45 0.1110212947029618
        50 0.10811556439378005
    }\loadedtable;

    \addplot[
        color=blue,
        mark=*,
        ] 
    table[x=x,y=y] {\loadedtable};
    \end{axis}
\end{tikzpicture}
    }
    \subcaption{Optimum Value vs $r$}
    \label{subfig:opt_val_vs_num_gmm}
    \end{subfigure}
    \hfill
    \begin{subfigure}{0.9\linewidth}
    \resizebox{\linewidth}{!}{
        \begin{tikzpicture}
    \begin{axis}[
        xlabel={Num. GMM Comp.},
        ylabel={Time (s)},
        grid=major,
        height=4cm,
        width=10cm,
        legend style={at={(0.5,-0.15)},anchor=north,legend columns=-1},
    ]
    \pgfplotstableread{
        x y
        5 0.10639190673828125
        10 0.18224191665649414
        15 0.22929000854492188
        20 0.26682591438293457
        25 0.342832088470459
        30 0.37670397758483887
        35 0.44867467880249023
        40 0.511260986328125
        45 0.5750641822814941
        50 0.6315748691558838
    }\loadedtable;

    \addplot[
        color=blue,
        mark=*,
        ] 
    table[x=x,y=y] {\loadedtable};
    \end{axis}
\end{tikzpicture}
    }
    \subcaption{Computation Time vs $r$}
    \label{subfig:time_vs_num_gmm}
    \end{subfigure}
    \caption{Optimal Value and Computation time vs the number of components, $r$, of initial GMM}
    \label{fig:opt_val_time_vs_num_gmm}
\end{figure}
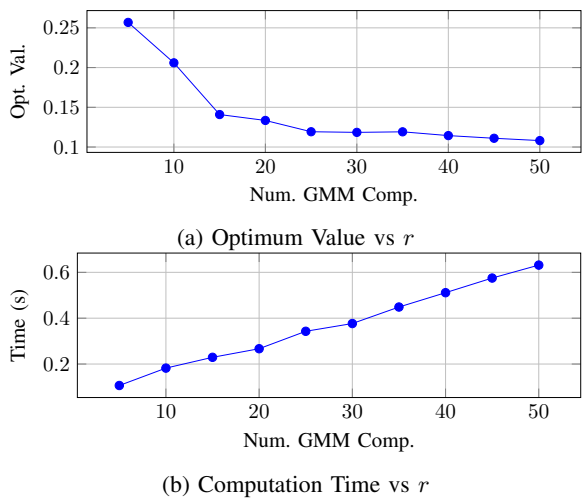

\subsection{2D Single Integrator - Problems \ref{prob:soft-GMM-density-steering} and \ref{prob:total-cost-constrained-GMM-density-steering}}\label{s:num-exp-prob23}
Furthermore, we solve Problems \ref{prob:soft-GMM-density-steering} and \ref{prob:total-cost-constrained-GMM-density-steering} for the 2D single integrator system defined in Section \ref{s:num-exp-prob1}. 
In these numerical experiments, we take $Q_k = I_2$. The initial state distribution is represented as a GMM with 3 components, while the desired distribution is a ``C''-shaped uniform distribution over 2D Euclidean space. 
The desired distribution is approximated using the EM algorithm as a GMM with 10 components. 
The initial and approximated desired densities are shown in Fig.~\ref{fig:init-des-ex2}.

\begin{figure}
    \centering
    \begin{subfigure}{0.48\linewidth}
        \centering
        \resizebox{\linewidth}{!}{
            \begin{tikzpicture}

\definecolor{darkgray176}{RGB}{176,176,176}

\begin{axis}[
axis equal image,
tick align=outside,
tick pos=left,
x grid style={darkgray176},
xmin=-10, xmax=20,
xtick style={color=black},
y grid style={darkgray176},
ymin=-10, ymax=20,
ytick style={color=black}
]
\addplot graphics [includegraphics cmd=\pgfimage,xmin=-10, xmax=20, ymin=-10, ymax=20] {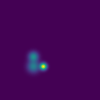};
\end{axis}

\end{tikzpicture}
        }
        \subcaption{Initial State PDF}
        \label{subfig:init-ex2}
    \end{subfigure}
    \hfill
    \begin{subfigure}{0.48\linewidth}
        \centering
        \resizebox{\linewidth}{!}{
            \begin{tikzpicture}

\definecolor{darkgray176}{RGB}{176,176,176}

\begin{axis}[
axis equal image,
tick align=outside,
tick pos=left,
x grid style={darkgray176},
xmin=-10, xmax=20,
xtick style={color=black},
y grid style={darkgray176},
ymin=-10, ymax=20,
ytick style={color=black}
]
\addplot graphics [includegraphics cmd=\pgfimage,xmin=-10, xmax=20, ymin=-10, ymax=20] {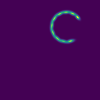};
\end{axis}

\end{tikzpicture}
        }
        \subcaption{Desired PDF}
        \label{subfig:des-ex2}
    \end{subfigure}
    \caption{Initial and desired PDFs}
    \label{fig:init-des-ex2}
\end{figure}


First, we solve Problem \ref{prob:soft-GMM-density-steering} for varying parameters $\kappa \in \{ 0.05, 0.2, 2.5, 50.0 \}$. The terminal state PDFs obtained after applying the optimal policy for each $\kappa$ are shown in Fig.~\ref{fig:prob2-terminal-dist-vs-kappa}. We observe therein that as $\kappa$ increases, the discrepancy between the terminal and desired distributions is penalized more heavily, resulting in a terminal state distribution that resembles the desired distribution more closely. 
Furthermore, 
when $\kappa$ is small, the terminal distribution tends to be less dispersed.

Even with $\kappa = 50.0$, the terminal density shown in Fig.~\ref{subfig:prob2-term-dist-kappa50p0} does not exactly match the desired distribution shown in Fig.~\ref{subfig:des-ex2}. 
This is because the terminal GMM has 5 components whereas the desired GMM has 10. 
The terminal distribution shown in Figure \ref{subfig:prob2-term-dist-kappa50p0} is the one that minimizes the GMM-Wasserstein distance. 
This can be verified in Figure \ref{fig:GMM-Wass-vs-kappa}, where the value of the GMM-Wasserstein distance between the terminal and desired distributions is plotted against $\kappa$. 
As $\kappa$ increases, the GMM-Wasserstein distance between the terminal state distribution and the desired one decreases but converges to $1.09$.

\begin{figure*}
    \centering
    \begin{subfigure}{0.24\linewidth}
        \centering
        \resizebox{\linewidth}{!}{
            \begin{tikzpicture}

\definecolor{darkgray176}{RGB}{176,176,176}

\begin{axis}[
axis equal image,
tick align=outside,
tick pos=left,
x grid style={darkgray176},
xmin=-10, xmax=20,
xtick style={color=black},
y grid style={darkgray176},
ymin=-10, ymax=20,
ytick style={color=black}
]
\addplot graphics [includegraphics cmd=\pgfimage,xmin=-10, xmax=20, ymin=-10, ymax=20] {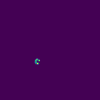};
\end{axis}

\end{tikzpicture}
        }
        \subcaption{$\kappa = 0.05$}
        \label{subfig:prob2-term-dist-kappa0p05}
    \end{subfigure}
    \hfill
    \begin{subfigure}{0.24\linewidth}
        \centering
        \resizebox{\linewidth}{!}{
            \begin{tikzpicture}

\definecolor{darkgray176}{RGB}{176,176,176}

\begin{axis}[
axis equal image,
tick align=outside,
tick pos=left,
x grid style={darkgray176},
xmin=-10, xmax=20,
xtick style={color=black},
y grid style={darkgray176},
ymin=-10, ymax=20,
ytick style={color=black}
]
\addplot graphics [includegraphics cmd=\pgfimage,xmin=-10, xmax=20, ymin=-10, ymax=20] {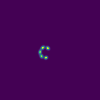};
\end{axis}

\end{tikzpicture}
        }
        \subcaption{$\kappa = 0.2$}
        \label{subfig:prob2-term-dist-kappa0p20}
    \end{subfigure}
    \hfill
    \begin{subfigure}{0.24\linewidth}
        \centering
        \resizebox{\linewidth}{!}{
            \begin{tikzpicture}

\definecolor{darkgray176}{RGB}{176,176,176}

\begin{axis}[
axis equal image,
tick align=outside,
tick pos=left,
x grid style={darkgray176},
xmin=-10, xmax=20,
xtick style={color=black},
y grid style={darkgray176},
ymin=-10, ymax=20,
ytick style={color=black}
]
\addplot graphics [includegraphics cmd=\pgfimage,xmin=-10, xmax=20, ymin=-10, ymax=20] {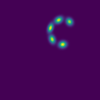};
\end{axis}

\end{tikzpicture}
        }
        \subcaption{$\kappa = 2.5$}
        \label{subfig:prob2-term-dist-kappa2p5}
    \end{subfigure}
    \hfill
    \begin{subfigure}{0.24\linewidth}
        \centering
        \resizebox{\linewidth}{!}{
            \begin{tikzpicture}

\definecolor{darkgray176}{RGB}{176,176,176}

\begin{axis}[
axis equal image,
tick align=outside,
tick pos=left,
x grid style={darkgray176},
xmin=-10, xmax=20,
xtick style={color=black},
y grid style={darkgray176},
ymin=-10, ymax=20,
ytick style={color=black}
]
\addplot graphics [includegraphics cmd=\pgfimage,xmin=-10, xmax=20, ymin=-10, ymax=20] {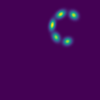};
\end{axis}

\end{tikzpicture}
        }
        \subcaption{$\kappa = 50.0$}
        \label{subfig:prob2-term-dist-kappa50p0}
    \end{subfigure}
    \caption{Terminal State PDF vs $\kappa$ for Problem \ref{prob:soft-GMM-density-steering}}
    \label{fig:prob2-terminal-dist-vs-kappa}
\end{figure*}

\begin{figure}
    \centering
    \resizebox{\linewidth}{!}{
        \begin{tikzpicture}
    \begin{loglogaxis}[
        xlabel={$\kappa$},
        ylabel={$W_{\mathrm{GMM}}^{2}(\rho_N, \rho_d)$},
        grid=major,
        height=4cm,
        width=10cm,
        legend style={at={(0.5,-0.15)},anchor=north,legend columns=-1},
    ]
    \pgfplotstableread{
        x y
        0.01 228.76878149662977
        0.05 189.65280295406347
        0.2 103.20626249911551
        0.5 45.21065941232787
        1.0 19.14328475705691
        2.5 5.144577223512199
        5.0 2.6094132672404973
        10.0 1.680652557329921
        25.0 1.1265913445130804
        50.0 1.1158754531253126
        100.0 1.091783036146476
        250.0 1.324877832989902
        1000.0 1.099676098889404
    }\loadedtable;

    \addplot[
        color=blue,
        mark=*,
        ] 
    table[x=x,y=y] {\loadedtable};
    \end{loglogaxis}
\end{tikzpicture}
    }
    \caption{GMM-Wasserstein Distance vs $\kappa$ for Problem \ref{prob:soft-GMM-density-steering}}
    \label{fig:GMM-Wass-vs-kappa}
\end{figure}
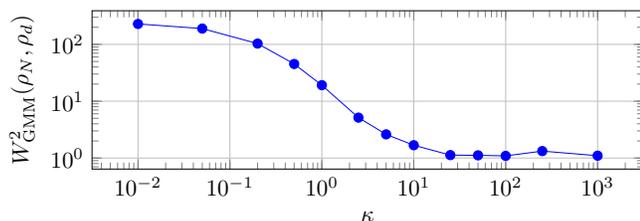

We also solve Problem \ref{prob:total-cost-constrained-GMM-density-steering} with varying parameter $\kappa$.
The terminal state distributions for $\kappa \in \{ 10.0, 25.0, 50.0, 100.0 \}$ are shown in Figure \ref{fig:prob3-terminal-dist-vs-kappa}.
In Problem \ref{prob:total-cost-constrained-GMM-density-steering}, parameter $\kappa$ represents the total quadratic cost that can be accumulated over problem horizon.
As $\kappa$ is increased from $10.0$ to $100.0$, the total cost constraint in \eqref{eq:nlp-total-cost-GMM-density-steering-constr} is implicitly relaxed. 
This causes the terminal state distribution to converge to the one in Figure \ref{subfig:prob2-term-dist-kappa50p0}.

\begin{figure*}
    \centering
    \begin{subfigure}{0.24\linewidth}
        \centering
        \resizebox{\linewidth}{!}{
\begin{tikzpicture}

\definecolor{darkgray176}{RGB}{176,176,176}

\begin{axis}[
axis equal image,
tick align=outside,
tick pos=left,
x grid style={darkgray176},
xmin=-10, xmax=20,
xtick style={color=black},
y grid style={darkgray176},
ymin=-10, ymax=20,
ytick style={color=black}
]
\addplot graphics [includegraphics cmd=\pgfimage,xmin=-10, xmax=20, ymin=-10, ymax=20] {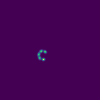};
\end{axis}

\end{tikzpicture}
        }
        \subcaption{$\kappa = 10.0$}
        \label{subfig:prob3-term-dist-kappa10p0}
    \end{subfigure}
    \hfill
    \begin{subfigure}{0.24\linewidth}
        \centering
        \resizebox{\linewidth}{!}{
\begin{tikzpicture}

\definecolor{darkgray176}{RGB}{176,176,176}

\begin{axis}[
axis equal image,
tick align=outside,
tick pos=left,
x grid style={darkgray176},
xmin=-10, xmax=20,
xtick style={color=black},
y grid style={darkgray176},
ymin=-10, ymax=20,
ytick style={color=black}
]
\addplot graphics [includegraphics cmd=\pgfimage,xmin=-10, xmax=20, ymin=-10, ymax=20] {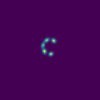};
\end{axis}

\end{tikzpicture}
        }
        \subcaption{$\kappa = 25.0$}
        \label{subfig:prob3-term-dist-kappa25p0}
    \end{subfigure}
    \hfill
    \begin{subfigure}{0.24\linewidth}
        \centering
        \resizebox{\linewidth}{!}{
\begin{tikzpicture}

\definecolor{darkgray176}{RGB}{176,176,176}

\begin{axis}[
axis equal image,
tick align=outside,
tick pos=left,
x grid style={darkgray176},
xmin=-10, xmax=20,
xtick style={color=black},
y grid style={darkgray176},
ymin=-10, ymax=20,
ytick style={color=black}
]
\addplot graphics [includegraphics cmd=\pgfimage,xmin=-10, xmax=20, ymin=-10, ymax=20] {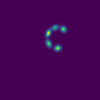};
\end{axis}

\end{tikzpicture}
        }
        \subcaption{$\kappa = 50.0$}
        \label{subfig:prob3-term-dist-kappa50p0}
    \end{subfigure}
    \hfill
    \begin{subfigure}{0.24\linewidth}
        \centering
        \resizebox{\linewidth}{!}{
\begin{tikzpicture}

\definecolor{darkgray176}{RGB}{176,176,176}

\begin{axis}[
axis equal image,
tick align=outside,
tick pos=left,
x grid style={darkgray176},
xmin=-10, xmax=20,
xtick style={color=black},
y grid style={darkgray176},
ymin=-10, ymax=20,
ytick style={color=black}
]
\addplot graphics [includegraphics cmd=\pgfimage,xmin=-10, xmax=20, ymin=-10, ymax=20] {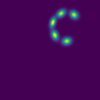};
\end{axis}

\end{tikzpicture}
        }
        \subcaption{$\kappa = 100.0$}
        \label{subfig:prob3-term-dist-kappa100p0}
    \end{subfigure}
    \hfill
    \caption{Terminal State PDF vs $\kappa$ for Problem \ref{prob:total-cost-constrained-GMM-density-steering}}
    \label{fig:prob3-terminal-dist-vs-kappa}
\end{figure*}

\subsection{Control of Drone Swarm - Problem 4}\label{s:num-exp-prob4}
In this section, we apply the GMM density steering techniques we developed to the problem of drone swarm trajectory optimization. 
We assume that practical constraints, such as maximum speed and maximum thrust per propeller, are enforced at all times during operation. 
Thus, the drone swarm trajectory optimization problem can be formulated as an instance of Problem \ref{prob:step-cost-constrained-GMM-density-steering}.

To obtain smooth trajectories for the drones to follow, we consider two dimensional double integrator dynamics:
$A_k = \Big[ \begin{smallmatrix} 
        I_2 & \Delta t \\
        \bm{0} & I_2 
    \end{smallmatrix} \Big],~B_k = \Big[ \begin{smallmatrix} 
        (\Delta t)^2 / 2 I_2 \\
        I_2 \end{smallmatrix} \Big] $
$x_k = [p_k^{x}, p_k^{y}, v_k^{x}, v_k^{y}]\transpose \in \R{4}$, $u_k = [a_k^x, a_k^y]\transpose \in \R{2}$ for all $k \in \curly{0, \dots, N-1}$ with $N = 8$.
In this example, $p_k := [p_k^x, p_k^y]\transpose \in \R{2}$, $v_k := [v_k^x, v_k^y]\transpose \in \R{2}$ and $a_k := [a_k^x, a_k^y]\transpose \in \R{2}$ represent the position, velocity, and acceleration, respectively. 
Additionally, two separate constraints are enforced for the state $x_k$ and control input $u_k$. 
Specifically, the acceleration and velocity satisfy the following constraints: $\E{a_k\transpose a_k} \leq a_{\mathrm{max}}^2$ and $\E{v_k\transpose v_k} \leq v_\mathrm{max}^2$, respectively (whereas the position $p_k$ is unbounded). To enforce these upper bound constraints, we set  $R_k = a_{\mathrm{max}}^{-2} I_2$, $Q_k = \bdiag{\bm{0}, v_\mathrm{max}^{-1} I_2 }$, $x_k' = \bm{0}$ and $\kappa_k = 1$ for all $k \in \curly{0, \dots, N-1}$ where $a_\mathrm{max} = 0.2 \mathrm{m/s^2}$ and $v_\mathrm{max} = 1.0 \mathrm{m/s}$.
Both maximum speed and acceleration constraints are enforced separately. Finally, $\Delta t$ is varied as a parameter, with $\Delta t \in \curly{1.0, 1.25, 1.5, 1.75, 2.0, 2.5, 3.0}$ to control the total time horizon in the experiments. The initial and desired PDFs of the drone positions are shown in Fig.~\ref{fig:init-des-pos-pdf-drones}. 


\begin{figure}
    \centering
    \begin{subfigure}{0.49\linewidth}
        \centering
        \resizebox{\linewidth}{!}{
            \begin{tikzpicture}

\definecolor{darkgray176}{RGB}{176,176,176}

\begin{axis}[
axis equal image,
tick align=outside,
tick pos=left,
x grid style={darkgray176},
xmin=-10, xmax=20,
xtick style={color=black},
y grid style={darkgray176},
ymin=-10, ymax=20,
ytick style={color=black}, 
xlabel={$p^x$}, 
ylabel={$p^y$}, 
ticklabel style={font=\Large},
label style={font=\Large},
]
\addplot graphics [includegraphics cmd=\pgfimage,xmin=-10, xmax=20, ymin=-10, ymax=20] {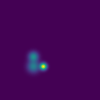};
\end{axis}

\end{tikzpicture}
        }
        \caption{Initial PDF}
        \label{subfig:init-pos-pdf-drones}
    \end{subfigure}
    \hfill
    \begin{subfigure}{0.49\linewidth}
        \centering
        \resizebox{\linewidth}{!}{
            \begin{tikzpicture}

\definecolor{darkgray176}{RGB}{176,176,176}

\begin{axis}[
axis equal image,
tick align=outside,
tick pos=left,
x grid style={darkgray176},
xmin=-10, xmax=20,
xtick style={color=black},
y grid style={darkgray176},
ymin=-10, ymax=20,
ytick style={color=black}, 
xlabel={$p^x$},
ylabel={$p^y$}, 
ticklabel style={font=\Large}, 
label style={font=\Large},
]
\addplot graphics [includegraphics cmd=\pgfimage,xmin=-10, xmax=20, ymin=-10, ymax=20] {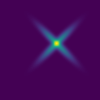};
\end{axis}

\end{tikzpicture}
        }
        \caption{Desired PDF}
        \label{subfig:des-pos-pdf-drones}
    \end{subfigure}
    \caption{Initial and Desired PDFs of Drone Positions}
    \label{fig:init-des-pos-pdf-drones}
\end{figure}

For drone simulations, we used \texttt{gym-pybullet-drones} \cite{p:panerati2021gym-pybullet-drones}. 
The optimal policy obtained after solving the problem in \eqref{eq:stepBilinear} returns a sequence of acceleration commands given an initial state. 
The drones then follow this trajectory using cascaded PID controllers. 
The maximum speed, $v_\mathrm{max}$, and acceleration, $a_\mathrm{max}$, are selected such that the underlying PID controllers can easily track the output trajectory. 
In all of these experiments, 20 drones are used. 
The drones' initial positions 
are sampled from the GMM distribution shown in Figure \ref{subfig:init-pos-pdf-drones}. 

In Figure \ref{fig:traj-vel-drones}, the trajectories of the drones in the $x$-$y$ plane 
are depicted for $\Delta t \in \{1.0, 1.5, 3.0\}$. The problem horizon parameter $N=8$ (fixed) whereas the varying $\Delta t$ determines the time frame of the trajectories. The speed and acceleration limits prevent the drones from reaching the desired distribution within a given time, as shown in Fig.~\ref{subfig:traj-dt1} for $\Delta t = 1.0\mathrm{s}$. 
When $\Delta t = 1.5\mathrm{s}$, the terminal positions of the drones are closer to the desired distribution, as shown in Fig.~\ref{subfig:traj-dt1p5}. 
With $\Delta t = 3.0\mathrm{s}$, there is ample time to reach the desired positions (terminal GMM matches the desired GMM). Consequently, there exist multiple sets of optimal policy parameters that solve Problem \ref{prob:step-cost-constrained-GMM-density-steering} when $\Delta t = 3.0\mathrm{s}$. 
The optimal policy parameters returned by the BCD procedure for $\Delta t = 3.0\mathrm{s}$ is not as strict as the other cases in maximizing the speed to reach the desired positions of the drones. That is why the trajectories are more dispersed and irregular in Figures \ref{subfig:traj-dt3}, \ref{subfig:vx-dt3} and \ref{subfig:vy-dt3}.


The terminal positions of the drones form an ``X'' shape, as shown in Figure \ref{subfig:des-pos-pdf-drones}. 
In this experiment, although the terminal distribution matches the desired one exactly, the terminal positions of the drones do not appear to precisely align with the desired distribution because only 20 sampled drone positions were used. 
Additionally, Table \ref{tab:wass-vs-dt-problem4} shows the optimal value of Problem \ref{prob:step-cost-constrained-GMM-density-steering} versus $\Delta t$.
Due to the maximum speed constraint, it is not possible to reach the desired distribution within a finite time if $\Delta t = 1.0 \mathrm{s}$, resulting in a large incurred cost. 
As $\Delta t$ increases, the terminal GMM-Wasserstein cost function decreases. 
With $\Delta t = 2.0 \mathrm{s}$ providing sufficient time for the drones to reach the desired distribution, any $\Delta t$ greater than $2.0\mathrm{s}$ will yield the same optimal cost of zero.
\begin{figure}
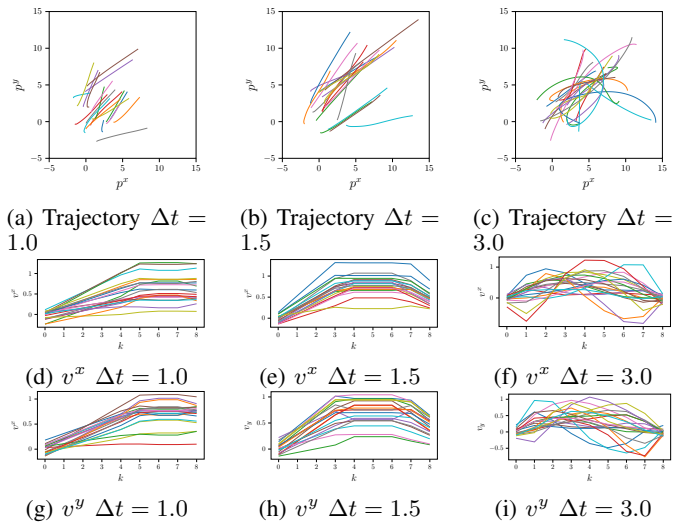

    \centering
    \begin{subfigure}{0.3\linewidth}
        \resizebox{\linewidth}{!}{
            \input{example4/drone_traj_dt1}
        }
        \caption{Trajectory $\Delta t = 1.0$}
        \label{subfig:traj-dt1}
    \end{subfigure}
    \hfill
    \begin{subfigure}{0.3\linewidth}
        \resizebox{\linewidth}{!}{
            \input{example4/drone_traj_dt1p5}
        }
        \caption{Trajectory $\Delta t = 1.5$}
        \label{subfig:traj-dt1p5}
    \end{subfigure}
    \hfill
    \begin{subfigure}{0.3\linewidth}
        \resizebox{\linewidth}{!}{
            \input{example4/drone_traj_dt3}
        }
        \caption{Trajectory $\Delta t = 3.0$}
        \label{subfig:traj-dt3}
    \end{subfigure}
     \hfill
    \begin{subfigure}{0.3\linewidth}
        \resizebox{\linewidth}{!}{
\begin{tikzpicture}

\definecolor{crimson2143940}{RGB}{214,39,40}
\definecolor{darkgray176}{RGB}{176,176,176}
\definecolor{darkorange25512714}{RGB}{255,127,14}
\definecolor{darkturquoise23190207}{RGB}{23,190,207}
\definecolor{forestgreen4416044}{RGB}{44,160,44}
\definecolor{goldenrod18818934}{RGB}{188,189,34}
\definecolor{gray127}{RGB}{127,127,127}
\definecolor{mediumpurple148103189}{RGB}{148,103,189}
\definecolor{orchid227119194}{RGB}{227,119,194}
\definecolor{sienna1408675}{RGB}{140,86,75}
\definecolor{steelblue31119180}{RGB}{31,119,180}

\begin{axis}[
tick align=outside,
tick pos=left,
x grid style={darkgray176},
xmin=-0.4, xmax=8.4,
xtick style={color=black},
y grid style={darkgray176},
ymin=-0.312520969904245, ymax=1.3415741030484,
ytick style={color=black}, 
xlabel={$k$}, 
ylabel={$v^x$},
height=5cm,
width=10cm,
label style={font=\Large},
]
\addplot [semithick, steelblue31119180]
table {%
0 0.0550101731341287
1 0.175250874357795
2 0.2952599238746
3 0.414814359103032
4 0.533065234743006
5 0.620189524275435
6 0.60473270714852
7 0.604727282489744
8 0.596923515129294
};
\addplot [semithick, darkorange25512714]
table {%
0 0.00722206992816003
1 0.177562724264152
2 0.347917554979864
3 0.518294784368083
4 0.688694792029273
5 0.848866111013569
6 0.851417947775886
7 0.851410298277758
8 0.855778741262448
};
\addplot [semithick, forestgreen4416044]
table {%
0 -0.23733483022458
1 -0.0701997617134199
2 0.0981315710422686
3 0.268789214283881
4 0.44593150434415
5 0.730107951380715
6 0.816753276622867
7 0.816710469837209
8 0.692403913940722
};
\addplot [semithick, crimson2143940]
table {%
0 -0.119640221828024
1 -0.0155626729660001
2 0.0891231999793277
3 0.194992246751909
4 0.304154977922475
5 0.466485066218115
6 0.510829630171447
7 0.510830111635167
8 0.490711853177514
};
\addplot [semithick, mediumpurple148103189]
table {%
0 -0.0947225654175252
1 -0.00446848932528711
2 0.0862709008528281
3 0.177951156601498
4 0.272249134551315
5 0.408332118750185
6 0.443630452315672
7 0.443633728080061
8 0.392512999438248
};
\addplot [semithick, sienna1408675]
table {%
0 0.0797443368754139
1 0.147926260519329
2 0.215742647743403
3 0.282843253504733
4 0.347921208042168
5 0.372096954552908
6 0.346709061464015
7 0.346732474093002
8 0.40644146585637
};
\addplot [semithick, orchid227119194]
table {%
0 0.0389406547352383
1 0.121989483669839
2 0.204873987127887
3 0.28743551341896
4 0.369071717017548
5 0.428755471280698
6 0.417841094571876
7 0.417852978877302
8 0.443663506629783
};
\addplot [semithick, gray127]
table {%
0 -0.111434929514298
1 0.0128146563593265
2 0.137640261159857
3 0.263582389760772
4 0.392626802063044
5 0.570109217931649
6 0.612123717543801
7 0.612110948948992
8 0.534331891130711
};
\addplot [semithick, goldenrod18818934]
table {%
0 -0.0747570956751877
1 -0.0575522408736454
2 -0.039980424716467
3 -0.021694677420557
4 -0.00141071139411213
5 0.0539964396794004
6 0.0803089585240024
7 0.080307248375939
8 0.0745390211951815
};
\addplot [semithick, darkturquoise23190207]
table {%
0 0.114469355905336
1 0.328587697469674
2 0.542213423754987
3 0.754873785159865
4 0.964776317471412
5 1.11134067688534
6 1.07830273486543
7 1.07831909372285
8 1.13216467001502
};
\addplot [semithick, steelblue31119180]
table {%
0 0.0159258923362681
1 0.169948451743469
2 0.323938557858994
3 0.477860246238761
4 0.63155432988686
5 0.771499748396444
6 0.770545219670121
7 0.77052711884748
8 0.769866439406663
};
\addplot [semithick, darkorange25512714]
table {%
0 -0.234813047076663
1 -0.136474338011319
2 -0.0369700106374521
3 0.0648002295253394
4 0.172902254737831
5 0.389567168796765
6 0.473303949436765
7 0.473283650310817
8 0.334376904817373
};
\addplot [semithick, forestgreen4416044]
table {%
0 -0.00377594188813041
1 0.24981815592375
2 0.503503667855649
3 0.757359685560839
4 1.01163353258956
5 1.25763615497564
6 1.26638796336873
7 1.26636988442423
8 1.24040975701653
};
\addplot [semithick, crimson2143940]
table {%
0 -0.0273655960316563
1 0.0644785769150251
2 0.156481706005521
3 0.2487916591026
4 0.341940431549653
5 0.444545863242224
6 0.456651142784963
7 0.456640259770788
8 0.433061942951349
};
\addplot [semithick, mediumpurple148103189]
table {%
0 0.0653773071747862
1 0.096789974547176
2 0.127895938866059
3 0.158402053328642
4 0.187219139081136
5 0.183472568926933
6 0.162094427986446
7 0.162138305207057
8 0.261111256248255
};
\addplot [semithick, sienna1408675]
table {%
0 0.0466337988148973
1 0.290959490774968
2 0.53512923562926
3 0.778989229311187
4 1.02192275629128
5 1.23280791655325
6 1.22399015661133
7 1.22399870781579
8 1.24041065281488
};
\addplot [semithick, orchid227119194]
table {%
0 -0.00822658500687279
1 0.137320550715747
2 0.282949657684832
3 0.428733575057959
4 0.574917717082456
5 0.719258587037231
6 0.726393201621559
7 0.726383703962883
8 0.721300782508424
};
\addplot [semithick, gray127]
table {%
0 0.0518411873685862
1 0.202673963081042
2 0.353299444857656
3 0.50351636334321
4 0.652551636852749
5 0.770762808638493
6 0.757440552688045
7 0.757466911809112
8 0.807250712095347
};
\addplot [semithick, goldenrod18818934]
table {%
0 0.0694019607209628
1 0.240921650847164
2 0.412155884808414
3 0.582826995287577
4 0.751878313921546
5 0.880872465257631
6 0.862035791825429
7 0.86204975817884
8 0.880836647296192
};
\addplot [semithick, darkturquoise23190207]
table {%
0 0.0194676762040019
1 0.0883209976902768
2 0.157100123794137
3 0.225732299564111
4 0.293936211188774
5 0.35007924281171
6 0.345442124540244
7 0.345448514464918
8 0.363028671530601
};
\end{axis}

\end{tikzpicture}
        }
        \caption{$v^x$ $\Delta t = 1.0$}
        \label{subfig:vx-dt1}
    \end{subfigure}
    \hfill
    \begin{subfigure}{0.3\linewidth}
        \resizebox{\linewidth}{!}{
\begin{tikzpicture}

\definecolor{crimson2143940}{RGB}{214,39,40}
\definecolor{darkgray176}{RGB}{176,176,176}
\definecolor{darkorange25512714}{RGB}{255,127,14}
\definecolor{darkturquoise23190207}{RGB}{23,190,207}
\definecolor{forestgreen4416044}{RGB}{44,160,44}
\definecolor{goldenrod18818934}{RGB}{188,189,34}
\definecolor{gray127}{RGB}{127,127,127}
\definecolor{mediumpurple148103189}{RGB}{148,103,189}
\definecolor{orchid227119194}{RGB}{227,119,194}
\definecolor{sienna1408675}{RGB}{140,86,75}
\definecolor{steelblue31119180}{RGB}{31,119,180}

\begin{axis}[
tick align=outside,
tick pos=left,
x grid style={darkgray176},
xmin=-0.4, xmax=8.4,
xtick style={color=black},
y grid style={darkgray176},
ymin=-0.213980401334516, ymax=1.39193535145516,
ytick style={color=black}, 
xlabel={$k$}, 
ylabel={$v^x$},
height=5cm,
width=10cm,
label style={font=\Large},
]
\addplot [semithick, steelblue31119180]
table {%
0 0.137035507543457
1 0.531004676194744
2 0.924972225068417
3 1.31893918087381
4 1.31342709002138
5 1.31346812413321
6 1.31350004894195
7 1.28655549619054
8 0.892561967308788
};
\addplot [semithick, darkorange25512714]
table {%
0 0.0317988905724039
1 0.235841788006789
2 0.439885343454882
3 0.643928958239043
4 0.680247382542162
5 0.680267213897707
6 0.680282211681041
7 0.690567036341662
8 0.486515940446927
};
\addplot [semithick, forestgreen4416044]
table {%
0 0.0341673468395102
1 0.232253608841978
2 0.430340604794292
3 0.62842768980498
4 0.660408245077279
5 0.660431173047898
6 0.660448538287831
7 0.683846396515356
8 0.485750646924545
};
\addplot [semithick, crimson2143940]
table {%
0 -0.137689810091937
1 0.00699234905782861
2 0.151675836456979
3 0.29635976862239
4 0.482159985501355
5 0.482135855322917
6 0.482119131239069
7 0.386867376369744
8 0.242197454785945
};
\addplot [semithick, mediumpurple148103189]
table {%
0 -0.0798255764557595
1 0.189562715946017
2 0.45895065814426
3 0.728338955971653
4 0.897773312786665
5 0.897819889760801
6 0.897871158930062
7 0.717038451208295
8 0.447607785788397
};
\addplot [semithick, sienna1408675]
table {%
0 0.00251373166002001
1 0.251864522624531
2 0.501215388961645
3 0.750566093766588
4 0.831127376991174
5 0.831119287681202
6 0.83111304945878
7 0.727871386682056
8 0.478524145332501
};
\addplot [semithick, orchid227119194]
table {%
0 -0.0365095477056627
1 0.148395283487719
2 0.333300982658854
3 0.518206871744725
4 0.616356505803235
5 0.616356868888152
6 0.616357724917797
7 0.575900977736548
8 0.390996892092357
};
\addplot [semithick, gray127]
table {%
0 -0.0260467264833972
1 0.294863778210997
2 0.615773380870613
3 0.936683079596739
4 1.0695609604495
5 1.06959141375349
6 1.06962592888184
7 0.890722091220028
8 0.569782917610736
};
\addplot [semithick, goldenrod18818934]
table {%
0 -0.0957302914550818
1 0.130482304162701
2 0.356695221987816
3 0.582908143388314
4 0.753851511245468
5 0.753813817171186
6 0.753785995800971
7 0.570042206431213
8 0.343846672828527
};
\addplot [semithick, darkturquoise23190207]
table {%
0 -0.100559033132389
1 0.145198252497903
2 0.390955507279641
3 0.636713050290763
4 0.818999256434309
5 0.818964347017764
6 0.818937172719079
7 0.67590511616322
8 0.430168572912888
};
\addplot [semithick, steelblue31119180]
table {%
0 0.105987873957632
1 0.409540760397132
2 0.713092943472034
3 1.01664493369861
4 1.01201056759946
5 1.01204155830861
6 1.01206524702251
7 0.994078444007636
8 0.690507496811685
};
\addplot [semithick, darkorange25512714]
table {%
0 0.00670528498894615
1 0.221507653159505
2 0.43631053285509
3 0.651113427818781
4 0.716041761463029
5 0.716047933685831
6 0.716052692178721
7 0.677188290769696
8 0.462383640615328
};
\addplot [semithick, forestgreen4416044]
table {%
0 0.0966867001517091
1 0.380227411049685
2 0.663766540530491
3 0.947304809685937
4 0.945194198451846
5 0.945221012796102
6 0.945243428877346
7 0.875752294843125
8 0.592192490826257
};
\addplot [semithick, crimson2143940]
table {%
0 -0.128331717415406
1 0.0905276486620986
2 0.309387414771873
3 0.528247237565236
4 0.729290816587922
5 0.729243179532864
6 0.729208103820172
7 0.518192491051407
8 0.299354719018326
};
\addplot [semithick, mediumpurple148103189]
table {%
0 -0.114392866143491
1 0.142277730060786
2 0.398948257253725
3 0.65561863907117
4 0.855281838935995
5 0.855225772202551
6 0.855183968437407
7 0.596375315146699
8 0.339729501996866
};
\addplot [semithick, sienna1408675]
table {%
0 -0.0580638850510758
1 0.216270175944889
2 0.490603903608308
3 0.764937746683749
4 0.914323346651692
5 0.91429744917168
6 0.914276497594056
7 0.792202376005186
8 0.517884481455526
};
\addplot [semithick, orchid227119194]
table {%
0 -0.140984230753167
1 0.0732853531417387
2 0.287555309554539
3 0.501825592208913
4 0.714063299924214
5 0.714037323238422
6 0.714016703015709
7 0.616735191935876
8 0.402481456814044
};
\addplot [semithick, gray127]
table {%
0 0.0118756110692029
1 0.24426925426995
2 0.476663190189206
3 0.709057047978214
4 0.774655159662192
5 0.774656900669927
6 0.77465815241335
7 0.711950089403695
8 0.47955587446229
};
\addplot [semithick, goldenrod18818934]
table {%
0 0.0554944854765829
1 0.123601523318175
2 0.191709249795957
3 0.259817301182625
4 0.227202222055686
5 0.227209282610485
6 0.227211872302754
7 0.291486139876993
8 0.223378245655003
};
\addplot [semithick, darkturquoise23190207]
table {%
0 0.0276187109935553
1 0.29355276185904
2 0.55948669089207
3 0.825420360997203
4 0.88642201329129
5 0.886417585950868
6 0.886413837460953
7 0.785444128493127
8 0.519511752321564
};
\end{axis}

\end{tikzpicture}
        }
        \caption{$v^x$ $\Delta t = 1.5$}
        \label{subfig:vx-dt1p5}
    \end{subfigure}
    \hfill
    \begin{subfigure}{0.3\linewidth}
        \resizebox{\linewidth}{!}{
\begin{tikzpicture}

\definecolor{crimson2143940}{RGB}{214,39,40}
\definecolor{darkgray176}{RGB}{176,176,176}
\definecolor{darkorange25512714}{RGB}{255,127,14}
\definecolor{darkturquoise23190207}{RGB}{23,190,207}
\definecolor{forestgreen4416044}{RGB}{44,160,44}
\definecolor{goldenrod18818934}{RGB}{188,189,34}
\definecolor{gray127}{RGB}{127,127,127}
\definecolor{mediumpurple148103189}{RGB}{148,103,189}
\definecolor{orchid227119194}{RGB}{227,119,194}
\definecolor{sienna1408675}{RGB}{140,86,75}
\definecolor{steelblue31119180}{RGB}{31,119,180}

\begin{axis}[
tick align=outside,
tick pos=left,
x grid style={darkgray176},
xmin=-0.4, xmax=8.4,
xtick style={color=black},
y grid style={darkgray176},
ymin=-0.926112322165156, ymax=1.32726864712298,
ytick style={color=black}, 
xlabel={$k$}, 
ylabel={$v^x$},
height=5cm,
width=10cm,
label style={font=\Large},
]
\addplot [semithick, steelblue31119180]
table {%
0 0.0609734039683963
1 0.735966489588564
2 0.944802565158457
3 0.814599762759024
4 0.471602772136556
5 0.260016167621306
6 0.253264498524335
7 0.231351319033312
8 -0.0140944837001789
};
\addplot [semithick, darkorange25512714]
table {%
0 -0.0307343033404381
1 -0.000168202762377754
2 0.0927805860458075
3 0.249198361876047
4 0.364894713534431
5 0.347080917643991
6 0.190765697834873
7 -0.00405081973682173
8 -0.0381795569795692
};
\addplot [semithick, forestgreen4416044]
table {%
0 0.0320202098294143
1 0.453135661732291
2 0.590027121743147
3 0.556064228734923
4 0.38637225674284
5 0.185215170485567
6 0.0448684052225095
7 0.00343219282636863
8 -0.00184661566827342
};
\addplot [semithick, crimson2143940]
table {%
0 0.0885279950037087
1 0.285497196628604
2 0.184015061345748
3 0.095938132887378
4 0.11839725442514
5 0.157855848771052
6 0.157156226284082
7 0.118463489354355
8 0.0931030950930205
};
\addplot [semithick, mediumpurple148103189]
table {%
0 0.116681495879461
1 0.430785215391581
2 0.330150921106534
3 0.170861391916172
4 0.0972306170376403
5 0.118403875371012
6 0.199271214318512
7 0.237326385580606
8 0.123338504371987
};
\addplot [semithick, sienna1408675]
table {%
0 -0.00603160637274791
1 0.269543963765059
2 0.409312717928101
3 0.470946539028043
4 0.421182667603093
5 0.2446066465195
6 0.00991611128369388
7 -0.139100494181515
8 -0.0573197450028734
};
\addplot [semithick, orchid227119194]
table {%
0 -0.0463067454503808
1 0.19048435688835
2 0.386122836626556
3 0.542771016810392
4 0.682326215328106
5 0.821307174357618
6 0.831429190733708
7 0.480265683664907
8 -0.0175809993490907
};
\addplot [semithick, gray127]
table {%
0 -0.00567419110940667
1 0.288358427047093
2 0.439315806013596
3 0.511088189868903
4 0.497377134259695
5 0.449547177626827
6 0.396262660994575
7 0.25676902298286
8 0.0278536410745943
};
\addplot [semithick, goldenrod18818934]
table {%
0 -0.0820682415931088
1 0.31498537571999
2 0.627016378413205
3 0.822307293198368
4 0.839678830929585
5 0.75421429904776
6 0.60119582651455
7 0.286983487811036
8 -0.0778341364502027
};
\addplot [semithick, darkturquoise23190207]
table {%
0 0.0145298772663313
1 0.0407183436381346
2 -0.024942071271662
3 0.0868489502621758
4 0.323769895335946
5 0.649452933797477
6 1.07630996268565
7 1.07143402571225
8 0.134330931640694
};
\addplot [semithick, steelblue31119180]
table {%
0 -0.00773779642829369
1 0.106188298595951
2 0.172325216824989
3 0.280903070044581
4 0.430191862811351
5 0.563738083088959
6 0.609843325338527
7 0.415105238689339
8 0.0601863479576911
};
\addplot [semithick, darkorange25512714]
table {%
0 -0.0635506967390325
1 0.269956650020946
2 0.790493994540191
3 0.628091522069636
4 0.0589192649243027
5 -0.414022870721653
6 -0.672400532346799
7 -0.600505744771609
8 0.00180531062527256
};
\addplot [semithick, forestgreen4416044]
table {%
0 -0.0622770166992377
1 0.304411209812735
2 0.572114897568413
3 0.733651665657432
4 0.741662777187847
5 0.631525386292865
6 0.425187109873056
7 0.114076701058665
8 -0.105268254050929
};
\addplot [semithick, crimson2143940]
table {%
0 -0.288810088979591
1 -0.753559164238072
2 -0.0971554802230566
3 0.777369834751121
4 1.22484223942806
5 1.21459350857887
6 0.945743860659519
7 0.376478689641685
8 -0.210611397486423
};
\addplot [semithick, mediumpurple148103189]
table {%
0 -0.073449946260935
1 0.191467294284432
2 0.594473860329002
3 0.638061752198568
4 0.317381155769984
5 -0.207265479576861
6 -0.766040199765741
7 -0.823685914470241
8 -0.123265099402625
};
\addplot [semithick, sienna1408675]
table {%
0 -0.0103228222639983
1 0.363737548430131
2 0.553077159087324
3 0.630010370876328
4 0.594660931704781
5 0.458706895878182
6 0.213553426782811
7 -0.0786707664915801
8 -0.134926508635479
};
\addplot [semithick, orchid227119194]
table {%
0 -0.0690777743541849
1 -0.10035068993358
2 0.0168176285875957
3 0.200682161399091
4 0.405329724822339
5 0.528275547359455
6 0.515770355937337
7 0.315489251084261
8 0.0755679386316278
};
\addplot [semithick, gray127]
table {%
0 -0.165141172903583
1 -0.134347974047233
2 0.228571569768084
3 0.633612785004326
4 0.871964143419595
5 0.883839411587221
6 0.698299144252153
7 0.298034758785144
8 -0.0770242699891883
};
\addplot [semithick, goldenrod18818934]
table {%
0 -0.14133502685869
1 -0.503781382531302
2 -0.0406308445137399
3 0.647670406917724
4 0.914571991560956
5 0.667848404126016
6 0.157421765999669
7 -0.406535644243778
8 -0.204450270053557
};
\addplot [semithick, darkturquoise23190207]
table {%
0 0.0317816287288581
1 0.239533905075113
2 0.265130638556416
3 0.272808275612548
4 0.267551307362636
5 0.167965740459642
6 0.0245786574959893
7 -0.0282572897188782
8 0.0429148568980854
};
\end{axis}

\end{tikzpicture}
        }
        \caption{$v^x$ $\Delta t = 3.0$}
        \label{subfig:vx-dt3}
    \end{subfigure}
    \hfill
    \begin{subfigure}{0.3\linewidth}
        \resizebox{\linewidth}{!}{
\begin{tikzpicture}

\definecolor{crimson2143940}{RGB}{214,39,40}
\definecolor{darkgray176}{RGB}{176,176,176}
\definecolor{darkorange25512714}{RGB}{255,127,14}
\definecolor{darkturquoise23190207}{RGB}{23,190,207}
\definecolor{forestgreen4416044}{RGB}{44,160,44}
\definecolor{goldenrod18818934}{RGB}{188,189,34}
\definecolor{gray127}{RGB}{127,127,127}
\definecolor{mediumpurple148103189}{RGB}{148,103,189}
\definecolor{orchid227119194}{RGB}{227,119,194}
\definecolor{sienna1408675}{RGB}{140,86,75}
\definecolor{steelblue31119180}{RGB}{31,119,180}

\begin{axis}[
tick align=outside,
tick pos=left,
x grid style={darkgray176},
xmin=-0.4, xmax=8.4,
xtick style={color=black},
y grid style={darkgray176},
ymin=-0.199046080904371, ymax=1.15054272925285,
ytick style={color=black},
xlabel={$k$}, 
ylabel={$v^x$},
height=5cm,
width=10cm,
label style={font=\Large},
]
\addplot [semithick, steelblue31119180]
table {%
0 -0.109212737573175
1 0.0344273724888265
2 0.178637656961127
3 0.323953832554834
4 0.472337852285802
5 0.667321215387174
6 0.709163094284384
7 0.709144762041365
8 0.661491546643917
};
\addplot [semithick, darkorange25512714]
table {%
0 0.0742914594869605
1 0.208628519346098
2 0.342644950888775
3 0.476032296159484
4 0.607623961048227
5 0.698294339540942
6 0.676731389488299
7 0.676746604711329
8 0.732174873665812
};
\addplot [semithick, forestgreen4416044]
table {%
0 0.0584899769273919
1 0.217334205070514
2 0.375941491706138
3 0.534081853941938
4 0.690874712736606
5 0.813340570836984
6 0.797972719086093
7 0.798009946156738
8 0.849239662271614
};
\addplot [semithick, crimson2143940]
table {%
0 0.0317942247488632
1 0.0497381792047417
2 0.0675345099251409
3 0.0850405044088493
4 0.101728686265535
5 0.102476535171649
6 0.0920481885380727
7 0.0920420904962653
8 0.099644662743645
};
\addplot [semithick, mediumpurple148103189]
table {%
0 -0.127711483347045
1 0.077404458559581
2 0.283198488265005
3 0.49030564925512
4 0.701046687140209
5 0.964806370863193
6 1.01477410637595
7 1.01475269925149
8 0.906453169507132
};
\addplot [semithick, sienna1408675]
table {%
0 -0.0150348770453744
1 0.203262611724669
2 0.421696392371089
3 0.640387974893909
4 0.859750998002832
5 1.07763934661671
6 1.08919778333661
7 1.08917515983832
8 1.04397669301259
};
\addplot [semithick, orchid227119194]
table {%
0 0.0250725415178546
1 0.176295758650211
2 0.32744143883993
3 0.478431122610853
4 0.628947699826579
5 0.76144500992705
6 0.757300138005858
7 0.757306053424451
8 0.767762554923672
};
\addplot [semithick, gray127]
table {%
0 -0.0555275424758569
1 0.108441976155825
2 0.272728594584194
3 0.437625872248469
4 0.604196978160348
5 0.790848899127563
6 0.814777525130682
7 0.814774171880139
8 0.75654309727267
};
\addplot [semithick, goldenrod18818934]
table {%
0 -0.0499380163194122
1 0.0154996330356143
2 0.0811983571725972
3 0.147402510709212
4 0.215009240663183
5 0.303933869472801
6 0.32320574190223
7 0.323220416385117
8 0.355885076578799
};
\addplot [semithick, darkturquoise23190207]
table {%
0 0.0880553448585889
1 0.2302246774957
2 0.372009545532476
3 0.513039875090818
4 0.651920586091554
5 0.742974765666527
6 0.716829423774593
7 0.71684716067548
8 0.731077506821194
};
\addplot [semithick, steelblue31119180]
table {%
0 0.180657610480516
1 0.313101986837293
2 0.444710916084032
3 0.574686951165856
4 0.700052753398428
5 0.733626530595458
6 0.675375112282393
7 0.675403924574414
8 0.778089545002025
};
\addplot [semithick, darkorange25512714]
table {%
0 -0.137701134988134
1 0.0617708830464752
2 0.261968067636196
3 0.463569615347627
4 0.669061968282117
5 0.932582490920032
6 0.985832852115542
7 0.985824462616838
8 0.870265606630727
};
\addplot [semithick, forestgreen4416044]
table {%
0 0.0534281656890528
1 0.108902314455447
2 0.16413390512033
3 0.218891040718044
4 0.272305274602521
5 0.297926748751359
6 0.281271724548593
7 0.28129630079363
8 0.354138272698203
};
\addplot [semithick, crimson2143940]
table {%
0 -0.067018176091417
1 0.0846094986226037
2 0.236605343594918
3 0.389313586886625
4 0.543984077593489
5 0.724727465224724
6 0.752249231078691
7 0.752231654968653
8 0.710173730518301
};
\addplot [semithick, mediumpurple148103189]
table {%
0 0.0349602110522635
1 0.187115874528997
2 0.339146208717875
3 0.490927975035877
4 0.641976352428534
5 0.770234106292714
6 0.762664880400764
7 0.762664437316891
8 0.756811320265815
};
\addplot [semithick, sienna1408675]
table {%
0 0.110249249896371
1 0.274390765696632
2 0.438046895726686
3 0.600750900656869
4 0.760746163350625
5 0.861411188824149
6 0.828230967844085
7 0.828236890553144
8 0.83586449660642
};
\addplot [semithick, orchid227119194]
table {%
0 0.0842923497831349
1 0.235812061091992
2 0.386967986954145
3 0.537409710732159
4 0.685812893164175
5 0.787821658608832
6 0.763299336013971
7 0.76331368422649
8 0.810302602750571
};
\addplot [semithick, gray127]
table {%
0 -0.132664239399481
1 0.0387545342238532
2 0.210864359778809
3 0.384315980757816
4 0.561490168972494
5 0.795394052180487
6 0.846142816757683
7 0.846117126961999
8 0.781331683583949
};
\addplot [semithick, goldenrod18818934]
table {%
0 -0.0795350293463504
1 0.0374251639015757
2 0.154804482485628
3 0.272995638883631
4 0.393436617773258
5 0.547325916835408
6 0.578151437614257
7 0.578133232898787
8 0.527928463287112
};
\addplot [semithick, darkturquoise23190207]
table {%
0 -0.0978035368775555
1 0.0228112768258667
2 0.143934407114976
3 0.266043666254328
4 0.390890230450534
5 0.557787591476249
6 0.595039896978111
7 0.595026145325827
8 0.558703258256614
};
\end{axis}

\end{tikzpicture}
        }
        \caption{$v^y$ $\Delta t = 1.0$}
        \label{subfig:vy-dt1}
    \end{subfigure}
    \hfill
    \begin{subfigure}{0.3\linewidth}
        \resizebox{\linewidth}{!}{
\begin{tikzpicture}

\definecolor{crimson2143940}{RGB}{214,39,40}
\definecolor{darkgray176}{RGB}{176,176,176}
\definecolor{darkorange25512714}{RGB}{255,127,14}
\definecolor{darkturquoise23190207}{RGB}{23,190,207}
\definecolor{forestgreen4416044}{RGB}{44,160,44}
\definecolor{goldenrod18818934}{RGB}{188,189,34}
\definecolor{gray127}{RGB}{127,127,127}
\definecolor{mediumpurple148103189}{RGB}{148,103,189}
\definecolor{orchid227119194}{RGB}{227,119,194}
\definecolor{sienna1408675}{RGB}{140,86,75}
\definecolor{steelblue31119180}{RGB}{31,119,180}

\begin{axis}[
tick align=outside,
tick pos=left,
x grid style={darkgray176},
xmin=-0.4, xmax=8.4,
xtick style={color=black},
y grid style={darkgray176},
ymin=-0.194552417300779, ymax=1.09976432176741,
ytick style={color=black},
width=10cm, 
height=5cm,
xlabel={$k$},
ylabel={$v_y$},
label style={font=\Large},
]
\addplot [semithick, steelblue31119180]
table {%
0 0.14099210332105
1 0.432117293750064
2 0.723240838803726
3 1.01436376344904
4 0.970574678949176
5 0.970617528520823
6 0.970653316749181
7 0.937603111375883
8 0.646450081720212
};
\addplot [semithick, darkorange25512714]
table {%
0 -0.0928567919002946
1 0.157553931132646
2 0.407964673148157
3 0.658375291479624
4 0.834470306754231
5 0.83442487697066
6 0.834391015815179
7 0.612663497277074
8 0.362272919234291
};
\addplot [semithick, forestgreen4416044]
table {%
0 0.0419937243163502
1 0.330741668013125
2 0.619489209021759
3 0.908236365784605
4 0.962452082628217
5 0.962444886024193
6 0.96243878235288
7 0.839929380078744
8 0.551184006908134
};
\addplot [semithick, crimson2143940]
table {%
0 0.0598998335816807
1 0.285306503049381
2 0.510713574654985
3 0.736120582358427
4 0.751473624469806
5 0.751496530010843
6 0.751513520321556
7 0.759536765669721
8 0.534120213448522
};
\addplot [semithick, mediumpurple148103189]
table {%
0 0.220788037543331
1 0.390008937531207
2 0.559229514952227
3 0.728449543541638
4 0.564522694532554
5 0.564538707603671
6 0.56453753319724
7 0.735204955589897
8 0.56598778130143
};
\addplot [semithick, sienna1408675]
table {%
0 -0.05428137974673
1 0.223100470343536
2 0.500482017471144
3 0.777863283366881
4 0.924397430966404
5 0.92435708542374
6 0.92432662552813
7 0.703534389963206
8 0.426170024214514
};
\addplot [semithick, orchid227119194]
table {%
0 0.0770113223979298
1 0.389296245408165
2 0.701580485385491
3 1.01386420300967
4 1.04093174271886
5 1.04092950760526
6 1.0409267599641
7 0.920889785162316
8 0.608604877439435
};
\addplot [semithick, gray127]
table {%
0 0.172179320869587
1 0.397307617507885
2 0.62243501168219
3 0.847561787323183
4 0.750735221514974
5 0.750744450251651
6 0.750742033670928
7 0.833939258759401
8 0.608814832457617
};
\addplot [semithick, goldenrod18818934]
table {%
0 0.102672594393305
1 0.384218526897398
2 0.665764174568293
3 0.947309447579064
4 0.938579135929543
5 0.938598085423265
6 0.938611406922787
7 0.902682381992904
8 0.621127495966136
};
\addplot [semithick, darkturquoise23190207]
table {%
0 -0.013600670094821
1 0.119801586957737
2 0.253203823179873
3 0.3866062943599
4 0.444551929571419
5 0.444513312134849
6 0.44448184457431
7 0.329644748562409
8 0.196266764017945
};
\addplot [semithick, steelblue31119180]
table {%
0 0.069613480833465
1 0.278962729951145
2 0.48831122862759
3 0.697659434088852
4 0.697938501778828
5 0.697972580798384
6 0.698001158934287
7 0.682998024315088
8 0.473626520202105
};
\addplot [semithick, darkorange25512714]
table {%
0 0.0369067660797094
1 0.272700177012712
2 0.508493849097414
3 0.744287413928805
4 0.786030443802105
5 0.786040641398901
6 0.786048099100342
7 0.748130792225979
8 0.512332997764018
};
\addplot [semithick, forestgreen4416044]
table {%
0 -0.135719838252225
1 -0.0644799327508863
2 0.00676177218176138
3 0.0780047147238196
4 0.237309221954114
5 0.237283710232411
6 0.237264439797211
7 0.153711643352704
8 0.0824871835858873
};
\addplot [semithick, crimson2143940]
table {%
0 -0.105086350466822
1 0.148160370459397
2 0.401407069199605
3 0.654653635142861
4 0.843893968837732
5 0.843842763107085
6 0.843804594226192
7 0.602252370397783
8 0.34902832181419
};
\addplot [semithick, mediumpurple148103189]
table {%
0 -0.080098391166524
1 0.0965677015342338
2 0.273234746596865
3 0.449902044635008
4 0.588824175956086
5 0.588810658114036
6 0.588801366811961
7 0.509150409351732
8 0.332491357213738
};
\addplot [semithick, sienna1408675]
table {%
0 0.00243342491266578
1 0.16553907203083
2 0.328644388286811
3 0.49174981573829
4 0.543581624819602
5 0.54355443722014
6 0.543533203336292
7 0.426506851697831
8 0.263417392133955
};
\addplot [semithick, orchid227119194]
table {%
0 0.0210089803689937
1 0.10492058287005
2 0.188832615015147
3 0.272745096287802
4 0.279629472791415
5 0.279598062038225
6 0.279572700276098
7 0.181220728671465
8 0.0973285481448907
};
\addplot [semithick, gray127]
table {%
0 -0.114328093342045
1 0.0489643062021619
2 0.212257810961054
3 0.375551653727342
4 0.544203825479202
5 0.544181934767976
6 0.544166630702892
7 0.445577638964433
8 0.282296202354999
};
\addplot [semithick, goldenrod18818934]
table {%
0 0.0885278360116505
1 0.373020402017891
2 0.657512435008128
3 0.942004362711092
4 0.948409803299175
5 0.948397404232171
6 0.948384947081402
7 0.906859409859805
8 0.622378436103817
};
\addplot [semithick, darkturquoise23190207]
table {%
0 -0.0223974289756245
1 0.168021960219307
2 0.358442154692467
3 0.548862504398738
4 0.634755422627431
5 0.634759233703817
6 0.634762571304401
7 0.602112152151683
8 0.411691891621534
};
\end{axis}

\end{tikzpicture}
        }
        \caption{$v^y$ $\Delta t = 1.5$}
        \label{subfig:vy-dt1p5}
    \end{subfigure}
    \hfill
    \begin{subfigure}{0.3\linewidth}
        \resizebox{\linewidth}{!}{
\begin{tikzpicture}

\definecolor{crimson2143940}{RGB}{214,39,40}
\definecolor{darkgray176}{RGB}{176,176,176}
\definecolor{darkorange25512714}{RGB}{255,127,14}
\definecolor{darkturquoise23190207}{RGB}{23,190,207}
\definecolor{forestgreen4416044}{RGB}{44,160,44}
\definecolor{goldenrod18818934}{RGB}{188,189,34}
\definecolor{gray127}{RGB}{127,127,127}
\definecolor{mediumpurple148103189}{RGB}{148,103,189}
\definecolor{orchid227119194}{RGB}{227,119,194}
\definecolor{sienna1408675}{RGB}{140,86,75}
\definecolor{steelblue31119180}{RGB}{31,119,180}

\begin{axis}[
tick align=outside,
tick pos=left,
x grid style={darkgray176},
xmin=-0.4, xmax=8.4,
xtick style={color=black},
y grid style={darkgray176},
ymin=-0.854186661214229, ymax=1.15688823217218,
ytick style={color=black},
width=10cm, 
height=5cm,
xlabel={$k$},
ylabel={$v_y$},
label style={font=\Large},
]
\addplot [semithick, steelblue31119180]
table {%
0 0.211886664436584
1 0.611255455779625
2 0.352164386840402
3 -0.0474830593893203
4 -0.353670993308824
5 -0.482982697063708
6 -0.354178126863527
7 0.0234045866778179
8 0.197267041744591
};
\addplot [semithick, darkorange25512714]
table {%
0 -0.0635127682906818
1 0.0573688455702313
2 0.300918842440779
3 0.542467886563157
4 0.640251033662128
5 0.560832092726505
6 0.324476779091435
7 0.0150927865182913
8 -0.0738683705677758
};
\addplot [semithick, forestgreen4416044]
table {%
0 0.0104277227582813
1 0.320895788171986
2 0.452689059648535
3 0.433333591320252
4 0.276715952211062
5 0.104315207285368
6 0.00721096442486331
7 -0.0021031373218281
8 -0.0166435211317652
};
\addplot [semithick, crimson2143940]
table {%
0 -0.0359556480049871
1 0.0771028966499423
2 0.209513581144774
3 0.321333889198661
4 0.399693254348903
5 0.488065360302612
6 0.522646749685346
7 0.323428136463095
8 -0.0134199746369474
};
\addplot [semithick, mediumpurple148103189]
table {%
0 0.0559452175086069
1 0.320498856364566
2 0.332007663893455
3 0.246529091988228
4 0.161759588721381
5 0.182360484329594
6 0.270022930587179
7 0.248096990036869
8 0.0430061159195221
};
\addplot [semithick, sienna1408675]
table {%
0 0.085122803362797
1 0.266167901851816
2 0.229901601216313
3 0.15804170774369
4 0.126406644037711
5 0.128887226043557
6 0.134035008625794
7 0.111115189856999
8 0.0836313237010341
};
\addplot [semithick, orchid227119194]
table {%
0 -0.040244433011393
1 0.528215216858275
2 0.844718491370591
3 0.967471929495689
4 0.868817039082025
5 0.677356553468307
6 0.44623009324981
7 0.123403946385906
8 -0.150699340077055
};
\addplot [semithick, gray127]
table {%
0 -0.0832302950509605
1 0.00501787513332816
2 0.193758212657472
3 0.400425970556672
4 0.573905069972967
5 0.663422930286861
6 0.623938609488205
7 0.361070304886961
8 0.0281140638858298
};
\addplot [semithick, goldenrod18818934]
table {%
0 -0.0684549848617194
1 0.0969360846780506
2 0.297562803439685
3 0.489060147559382
4 0.663141509461687
5 0.82463750562882
6 0.872985905564323
7 0.565234059493656
8 0.0346683250814527
};
\addplot [semithick, darkturquoise23190207]
table {%
0 0.153661000859754
1 0.963807243874685
2 0.922092573328773
3 0.368097471916108
4 -0.199274880717443
5 -0.515831528372413
6 -0.645183232852827
7 -0.477497662735395
8 0.0607681163502829
};
\addplot [semithick, steelblue31119180]
table {%
0 0.0484453230449048
1 0.196938893189877
2 0.162803585311201
3 0.109249914410713
4 0.107681165462808
5 0.134573101206403
6 0.121337803409373
7 0.0431637646104174
8 0.0152396220087655
};
\addplot [semithick, darkorange25512714]
table {%
0 -0.0715384094157826
1 0.146012022671293
2 0.601527167452727
3 0.731966337030066
4 0.505393810389217
5 0.124723006987753
6 -0.404413519385438
7 -0.762774166060301
8 -0.096281495379518
};
\addplot [semithick, forestgreen4416044]
table {%
0 0.0195021592640474
1 0.315205281004171
2 0.430459192506266
3 0.471186645635436
4 0.485642198973207
5 0.522121655088578
6 0.520410751290652
7 0.323285665858344
8 0.0153457134200809
};
\addplot [semithick, crimson2143940]
table {%
0 0.0473398963192906
1 0.423027749345897
2 0.451788201296392
3 0.347243498308938
4 0.209664423217158
5 -0.0926284624014828
6 -0.595482708337457
7 -0.728435733545446
8 -0.0437826663767347
};
\addplot [semithick, mediumpurple148103189]
table {%
0 -0.194415367160609
1 -0.305385306996624
2 0.237555388513193
3 0.833233456129068
4 1.06547573701825
5 0.92882830706789
6 0.663950068858334
7 0.334706704900109
8 -0.0970435569248346
};
\addplot [semithick, sienna1408675]
table {%
0 0.106634468925962
1 0.596856816073075
2 0.653405578637501
3 0.541413493384019
4 0.361147438318935
5 0.21447785191637
6 0.0963195285012833
7 -0.0203665358392831
8 -0.0540150779617606
};
\addplot [semithick, orchid227119194]
table {%
0 0.0287625191485909
1 0.317979191260926
2 0.411529632204945
3 0.420859255541349
4 0.346139343764482
5 0.233958004988694
6 0.157758064595588
7 0.117317482699989
8 0.049171360610521
};
\addplot [semithick, gray127]
table {%
0 -0.0298412792321505
1 0.105376548241758
2 0.230987326667478
3 0.311382344161904
4 0.294051821952697
5 0.118082885281962
6 -0.180079177049261
7 -0.349093513746464
8 -0.105837147177121
};
\addplot [semithick, goldenrod18818934]
table {%
0 -0.11764020731545
1 -0.0227601369554138
2 0.597609195400505
3 0.897879242341027
4 0.69877063562885
5 0.320200454676741
6 0.132856284268617
7 -0.00477122258390333
8 -0.0633087628635044
};
\addplot [semithick, darkturquoise23190207]
table {%
0 -0.0942148002849645
1 0.126663247702802
2 0.439637448727324
3 0.628531074841059
4 0.60295422852721
5 0.482354077057954
6 0.310637961562092
7 0.0587359842749522
8 -0.123145239906899
};
\end{axis}

\end{tikzpicture}
        }
        \caption{$v^y$ $\Delta t = 3.0$}
        \label{subfig:vy-dt3}
    \end{subfigure}
    \caption{Drones' trajectories (position space) and velocity components versus time.}
    \label{fig:traj-vel-drones}
\end{figure}

\begin{table}[ht]
    \centering
    \begin{tabular}{|c|c|c|c|c|c|c|c|}
        \hline
        $\Delta t$ & 1.0 & 1.25 & 1.5 & 1.75 & 2.0 & 2.5 & 3.0  \\
        \hline
        Opt. Val & 17.74 & 7.04 & 3.20 & 1.84 & 0.0 & 0.0 & 0.0 \\
        \hline
    \end{tabular}
    \caption{Wasserstein-GMM vs $\Delta t$ for Problem \ref{prob:step-cost-constrained-GMM-density-steering}}
    \label{tab:wass-vs-dt-problem4}
\end{table}

\section{Conclusion}\label{s:conclusion}
In this paper, we studied the optimal multi-modal density steering problem for linear dynamical systems by leveraging GMMs and covariance steering theory. To achieve this, we first formulated the hard-constrained density steering problem as an LP and transformed other constrained problems into bilinear optimization problems. Subsequently, we introduced a block coordinate descent procedure to address these bilinear programs effectively. Finally, we derived upper bounds for the GMM approximation error concerning the terminal state distribution. Possible future research directions include the utilization and comparison of other GMM based distance metrics (such as Cauchy-Schwarz Divergence \cite{p:kampa2011ClosedFormCSDivergenceGMM}) in GMM steering problems and the integration of chance-constraints (e.g. chance-constrained obstacle avoidance) into GMM steering problems.

\bibliographystyle{ieeetr}
\bibliography{density_steering}

\section{Biography Section}

\vspace{-22pt}
\begin{IEEEbiographynophoto}{Isin M. Balci}
received a B.Sc. in Mechanical Engineering from the Bogazici University, Istanbul, Turkey, in 2018 and the M.S. and Ph.D. degrees in Aerospace Engineering from the University of Texas at Austin, Austin, TX, USA, in 2020 and 2024, respectively. He is currently a software engineer at Applied Intuition. His research is mainly focused on control of uncertain and stochastic systems and optimization-based control.
\end{IEEEbiographynophoto}

\begin{IEEEbiographynophoto}{Efstathios Bakolas}
(Senior Member, IEEE) received the Diploma degree in Mechanical Engineering with highest honors from the National Technical University of Athens, Athens, Greece, in 2004 and the M.S. and Ph.D. degrees in Aerospace Engineering from the Georgia Institute of Technology, Atlanta, Atlanta, GA, USA, in 2007 and 2011, respectively. He is currently an Associate Professor with the Department of Aerospace Engineering and Engineering Mechanics, University of Texas at
Austin, Austin, TX, USA. His research is mainly focused on control of uncertain and stochastic systems, data-driven control of complex systems, optimal control theory, decision making and control of autonomous agents and multi-agent networks and differential games.
\end{IEEEbiographynophoto}

\vfill

\end{document}